\def\fsh{{\mathcal U}_{\hbar}(\widehat{\frak{sl}}_{2}) }
\def\O {{\mathcal{O}}}
\newcommand{\Z}{{\mathbb{Z}}}
\newcommand{\Q}{{\mathbb{Q}}}
\def\Gr{\textbf{Gr}}
\def\G{\textbf{G}}
\def\Orb{\textbf{O}}
\def\M{\textbf{M}}
\def\N{\textbf{N}}
\def\h{\hbar}
\def\fp{ {\textbf{p}}  }
\def\fq{ {\textbf{q}}  }
\def\fn{ {\textbf{n}}  }
\def\fM{ {\frak{M}}  }
\def\fB{ {\frak{B}}  }  
\def\qm {{{\textnormal{\textsf{QM}}}}} 
\def\qotimes {\circledast} 
\def\tb {{\mathcal{V}}} 
\def\tbW{{\mathcal{W}}} 
\def\tbP{{\mathcal{P}}} 
\def \vss {\widehat{{\O}}_{{\rm{vir}}}} 
\def\tw {{\mathcal{W}}}
\def\qmV {{\mathscr{V}}} 
\def\qmW {{\mathscr{W}}} 
\def\qmP {{\mathscr{P}}}
\DeclareMathOperator{\Pic}{Pic}
\DeclareMathOperator{\Hom}{\mathscr{H}\text{\kern -3pt {\calligra\large om}}\,} 
\newcommand{\C}{{\mathbb{C}}}
\newcommand{\Ld}{{\Lambda}^{\!\raisebox{0.5mm}{$\scriptscriptstyle
      \bullet$}\,}}
\newcommand{\bA}{\mathsf{A}}
\newcommand{\bT}{\mathsf{T}}
\newcommand{\bG}{\mathsf{G}}
\def\be{\begin{eqnarray}}
\def\ee{\end{eqnarray}}
\def\bfig{\begin{figure}[H] }
\def\efig{\end{figure}}
\def\bc{\begin{center}}
\def\ec{\end{center}}
\newtheorem{Theorem}{Theorem}
\newtheorem{Lemma}{Lemma}
\newtheorem{Proposition}{Proposition}
\newtheorem{Corollary}{Corollary}
\newtheorem{Definition}{Definition}
\def\2{{1\over 2}}
\newcommand{\rf}[1]{(\ref{#1})}
\newcommand{\ce}{\mathcal{E}}
\newcommand{\cf}{\mathcal{F}}
\newcommand{\ch}{\mathcal{H}}
\newcommand{\cl}{\mathcal{L}}
\renewcommand{\t}{\tilde}
\def\de{\delta}
\def\al{\alpha}
\def\c1m2{\sqrt{{\bf x_1}}-c \sqrt{\bf x_2}}
\def\inv{^{-1}}
\def\<{\langle}
\def\>{\rangle}
\def\+{\dagger}
\newtheorem{Thm}{Theorem}[section]
\newtheorem{Prop}[Thm]{Proposition}
\newcommand{\rd}{}
\begin{document}
\title{Baxter $Q$-operator from quantum $K$-theory}
\author{Petr P. Pushkar}
\author{Andrey V. Smirnov}
\author{Anton M. Zeitlin}
\address{\newline
Petr P. Pushkar,\newline
Perimeter Institute for \newline
Theoretical Physics, \newline
31 Caroline Street North,\newline
Waterloo, Ontario, N2L 2Y5 Canada\newline
pushkar@math.columbia.edu}
\address{\newline
Andrey  Smirnov,\newline
Department of Mathematics, \newline
The University of North Carolina \newline
at Chapel Hill,\newline
120 E Cameron Avenue,
329 Phillips Hall, \newline
Chapel Hill, NC 27599 USA;\newline
Institute for Problems of \newline
Information Transmission \newline
Bolshoy Karetny 19, Moscow 127994, Russia\newline
asmirnov@email.unc.edu}
\address{
\newline
Anton M. Zeitlin,\newline
Department of Mathematics,\newline
Louisiana State University,\newline
Baton Rouge, LA 70803 USA;\newline
IPME RAS, V.O. Bolshoj pr., 61, 199178,\newline
St. Petersburg\newline
zeitlin@lsu.edu,\newline
http://math.lsu.edu/$\sim$zeitlin \newline
http://www.ipme.ru/zam.html  }

\begin{abstract}
We define and study the quantum equivariant $K$-theory of cotangent bundles over Grassmannians.
For every tautological bundle in the $K$-theory
we define its one-parametric deformation, referred to as quantum tautological bundle. We prove that the spectrum
of operators of quantum multiplication by these quantum classes is
governed by the Bethe ansatz equations for the inhomogeneous $XXZ$ spin chain. In addition, we prove that each such operator
corresponds to the universal elements of quantum group $\mathcal{U}_{\h}(\widehat{\mathfrak{sl}}_2)$.  In particular, we identify the Baxter operator for the $XXZ$ spin chain with the operator of quantum multiplication by the exterior algebra tautological bundle. The explicit universal combinatorial formula for this operator is found. The relation between quantum line bundles and quantum dynamical Weyl group is shown.
\end{abstract}
\maketitle

\vspace{1.2in}
\tableofcontents

\section{Introduction}
\vspace{0.5in}
\subsection{Summary of results}
A deep connection between quantum integrable systems and quantum geometry, in particular quantum cohomology and $K$-theory,   was observed in the pioneering works of N. Nekrasov  and S. Shatashvili \cite{ns1}, \cite{ns2}. It was noted, that the integrable systems associated with the quantum groups studied extensively in the 1980s by the Leningrad school, describe quantum geometry of the large class of symplectic algebraic varieties. These ideas were further developed by A. Braverman, D. Maulik, A. Okounkov \cite{bmo}, \cite{mo}, \cite{os} and other authors both in mathematics and physics, e.g. \cite{gtv}, \cite{gk1}, \cite{gk2}, \cite{koroteev}, \cite{oref}, \cite{pes}.
The substantial progress in this direction shed the light on earlier papers of A. Givental, Y. P. Lee \cite{givental}, \cite{yplee} and collaborators.

The simplest nontrivial examples of such varieties are the cotangent bundles over Grassmannians $\N_{k,n}=T^*\textbf{Gr}_{k,n}$. Results of \cite{ns1},\cite{ns2} suggest that certain \textit{quantum deformations} of cohomology and  $K$-theory rings of these varieties should be related to XXX and XXZ spin chains correspondingly. These are the integrable systems described by the Yangian $Y(\mathfrak{sl}_2)$ and the quantum group $\fsh$ respectively.
Moreover, \cite{ns1}, \cite{ns2} conjecture that the operator of  multiplication by the weighted exterior algebra of the tautological bundle in such deformed $K$-theory of $\N_{k,n}$ should be related to the so-called Baxter's $Q$-operator for the $XXZ$-spin chain.

In this paper we define the quantum $K$-theory ring of $\N_{k,n}$
 using the moduli space of quasimaps\footnote{Our definition of quantum $K$-theory ring
is alternative to the one given in \cite{givental}, where the moduli space of \textit{stable maps} were used.}.
That allows us to  give a corrected formulation and a proof of the above conjecture.  In fact, we discover that to relate quantum $K$-theory with spin chains it is not enough to consider the operators of quantum multiplication by classical $K$-theory classes. It turns out that \textit{both} the multiplication in $K$-theory and the  tautological classes  should be  deformed simultaneously.

We introduce elements $\hat{\tau}(z) \in K_{\bT} (\N_{k,n})[[z]]$ which we call \textit{quantum tautological bundles}.
In the classical limit $z\to 0$ these elements coincide with the corresponding classical bundles $\tau \in K_{\bT} (\N_{k,n})$
in the standard equivariant $K$-theory. Among the main objects we study in this paper are the operators of \textit{quantum multiplication by the quantum tautological bundles}.
We show that the spectrum of these operators is described by the Bethe ansatz equations for the $XXZ$-spin chain (see Theorem \ref{thbt}).

We use the geometric action of $\fsh$ on the equivariant $K$-theory of $\N_{k,n}$ (see e.g. \cite{vasserot}) to identify it with the space of quantum states for the $XXZ$ model. Under this identification the Baxter operator \cite{baxter} of the $XXZ$ model coincides with the operator of quantum multiplication by the weighted quantum exterior algebra of the tautological bundle (see Theorem~\ref{baxther}).

We show that the quantum tautological bundles are represented by certain universal elements from $\fsh$, which depend on $z$ but do not depend on the parameters $k$ and $n$ of the Grassmannian. In particular, we explicitly find this universal element in the case of quantum exterior powers of the tautological bundle (see Theorem \ref{Bform}). Moreover, the operator of quantum multiplication by the quantum tautological line bundle $\widehat{\O(1)}(z)$ (see (\ref{qtlb}) below) acts as a lattice element of the quantum affine dynamical Weyl group (see Theorem \ref{dymath}). This observation relates quantum $K$-theory with the theory of quantum dynamical Weyl groups of P. Etingof and A. Varchenko~\cite{ev}, which is a deformation of the standard quantum Weyl group \cite{soib}.


\subsection{ \label{definit} Classical $K$-theory of $\N_{k,n}$}
Let $\textbf{Gr}_{k,n}$ denote the Grassmannian of $k$-di\-mensional subspaces in complex $n$-dimensional vector space, viewed as a complex manifold. 

Let us choose a basis in the $n$-dimensional vector space 
such that the correspodning vectors have coordinates $(y_1,\dots,y_n)$. 
We consider the $n$-dimensional torus $\bA=(\C^{\times})^n$ acting in this vector space by scaling the coordinates in the chosen basis:
$$
(y_1,\dots,y_n)\rightarrow (y_1 a_1,\dots,y_n a_n).
$$
That necessarily induces the action of $\bA$ on $k$-subspaces, i.e., on $\textbf{Gr}_{k,n}$. 
Indeed,  a $k$-subspace representing a point in  $\textbf{Gr}_{k,n}$ is fixed under this action if and only if it is spanned by $k$ basis vectors. Thus, the fixed points $\textbf{Gr}_{k,n}^{\bA}$ are in one-to-one  correspondence with the $k$-subset of the set $\{1,2,\dots,n\}$, so that each such subset corresponds to a choice of coordinate $k$-vectors from the coordinate $n$-vectors. 

We denote by $\tb$ the rank $k$ tautological vector bundle 
over $\textbf{Gr}_{k,n}$. The total space of this vector bundle is defined as follows:
$$
\tb=\{(p,v)\in \textbf{Gr}_{k,n}\times W : v\in p\},
$$
where $W=\C^n$. In other words, the fiber of $\tb$ at a point $p\in \textbf{Gr}_{k,n}$ is the $k$-dimensional subspace of $\C^n$ represented by $p$.  We denote by 
$\tw=\textbf{Gr}_{k,n}\times W$ the trivial rank $n$ bundle, such that $\tb\subset \tw$.

The main object of this paper is the cotangent bundle of
the Grassmannian: 
$$
\N_{k,n}=T^*\textbf{Gr}_{k,n}. 
$$ 
We enlarge the torus acting on this symplectic manifold to
$\bT=\bA\times \C^{\times}_{\hbar}$, so that the new one-dimensional torus $\C^{\times}_{\hbar}$ acts on $\N_{k,n}$ by scaling the cotangent directions with $\hbar$. Since this new action contracts fibers, i.e. contracts $\N_{k,n}$ to $\textbf{Gr}_{k,n}$, the fixed point set
$\N_{k,n}^{\bT}$ is the same as $\textbf{Gr}_{k,n}^{\bA}$. 
In other words, the fixed set $\N_{k,n}^{\bT}$ is a set of
$n!/k!/(n-k)!$ points labeled by $k$-subsets 
$\{i_1,\dots,i_k\}\subset \{1,\dots, n\}.$  One can also think about $\tb$ as a bundle over $\N_{k,n}$. 

We will be interested in the equivariant $K$-theory $K_{\bT}(\N_{k,n})$. It is well known that the tautological bundle $\tb$, together with all the tensorial polynomials in $\tb$, generate this group. Localization theorem in equivariant K-theory states that the classes of fixed point 
is a basis of the localized K-theory. therefore, one can think about
the localized K-theory $K_{\bT}(\N_{k,n})_{loc}$ as a 
$n!/k!/(n-k)!$-dimensional vector space spanned by the classes of fixed points,  forgetting all other structures.

The cotangent bundle $\N_{k,n}$ has a structure of a Nakajima quiver variety. This means that it can be described as certain $GL(k)$-quotient of some subspaces of matrices. Let us describe this construction here: we will need this description in Section \ref{qms} to define quasimaps to $\N_{k,n}$.

Let $R=Hom(V,W)$ for $W=\C^{n}$ and $V=\C^{k}$ with $k\leq n$. Let
$\mu^{*}: \frak{gl}(V) \rightarrow Vect(R)$ be a map of Lie algebras induced by the canonical action of $GL(V)$ on $R$. The dual of this map is known as a moment map $\mu: T^*R \rightarrow \frak{gl}(V)^{*}$. Explicitly, the moment map has the following description. Let us identify
\be \label{cotansp}
T^*R=Hom(V,W)\oplus Hom(W,V), 
\ee
where we note that the second summand in the RHS of (\ref{cotansp}) is dual to the first. Then the value of the moment map on the pair $(A,B)\in T^*R$ with $A\in Hom(V,W)$ and $B \in Hom(W,V)$ equals to:
$$
\mu(A,B)=B A. 
$$
This describtion leads to the following well-known result 
(see Example 5.3.3  and Proposition 5.3.4 in \cite{ginzb} for $n=1$):
\begin{Theorem}
The cotangent bundle over grassmannian is isomorphic to the following $GL(V)$-quotient:	
\be \label{git}
\N_{k,n}=\mu^{-1}(0)_{s}/GL(V),
\ee
where the symbol $\mu^{-1}(0)_{s}$ denotes the intersection of the set $\mu^{-1}(0) \subset T^{*} R$ with the stable locus corresponding to injective elements in $R$:
\be \label{stpoints}
\textrm{stable points in $T^*R$} = \{ (A,B): rank(A)=k \}.
\ee
\end{Theorem} 

As a Nakajima quiver variety $\N_{k,n}$ is a symplectic resolution,
i.e. it comes with natural projective morphism to affine variety, Theorem 5.2.2 in \cite{ginzb}
\be \label{affineN}
\N_{k,n}\rightarrow \N^{0}_{k,n}:={\rm{Spec}}\Big(\C[\mu^{-1}(0)]^{GL(V)}\Big).
\ee

Now we give the description of fixed points on $\N_{k,n}$, tautological bundles, torus action and equivariant K-theory once again, this time from the perspective of Nakajima varieties.

We set a framing torus $\bA=\C^{\times}_{a_1}\times\cdots\times \C^{\times}_{a_n}$ to be an $n$-torus acting on $W$ by scaling the coordinates with characters $a_i$.  Let $\C^{\times}_{\hbar}$ be a one-torus acting on $T^*R$ by scaling the cotangent directions with character $\hbar$. We adopt the notation  $\bT=\bA\times \C^{\times}_{\hbar}$.

The action of $\bT$ on $T^*R$ induces its action on $\N_{k,n}$. The fixed set $\N_{k,n}^{\bT}$ consists of $n!/k!/(n-k)!$ isolated points representing the $k$-planes spanned by coordinate vectors. They are conveniently labeled by $k$-subsets
$\fp=\{x_1,\cdots,x_{k}\}\subset \{a_1,\cdots, a_n\}$.

We note that $\N_{k,n}$ is naturally equipped with the following tautological bundles:
\be \label{tautbun}
\tb=\mu^{-1}(0)_{s} \times V /GL(V), \ \ \tw= \mu^{-1}(0)_{s}\times W/GL(V).
\ee
Since $GL(V)$ does not act on $W$  the bundle $\tw$ is trivial, and because $A$ is injective we have $V\subset W$ and thus $\tb \subset \tw$.

Let $K_{GL(V)\times \bT}(pt) = \Z[s_1^{\pm 1},s^{\pm 1}_2,\cdots,s_k^{\pm 1},a_1^{\pm 1},\dots, a_{n}^{\pm},\hbar^{\pm}]^{\frak{S}_k}$ be the ring of symmetric Laurent polynomials in $k$ variables with coefficients in $K_{\bT}(pt)$.
For every such polynomial we denote by the same symbol $\tau \in K_{\bT}(\N_{k,n})$ the corresponding Schur functor of $\tb$.\footnote{For example, the polynomial
	$$\tau(s_1,\cdots,s_k)=(s_1+\cdots + s_k)^2 - \sum_{1\leq i_1<i_2<i_3\leq k} s_{i_1}^{-1} s_{i_2}^{-1} s_{i_3}^{-1}$$
	corresponds to $\tau(V) = V^{\otimes 2} - \Lambda^{3} V^{*}$.} 
This $K$-theory classes  $\tau$ can be uniquely represented by the symmetric Laurent polynomials in the corresponding Chern roots of $\tb$ and thus there should be no confusion in our notations.

Let us set the following notation for the disjoint union of $\N_{k,n}$ for all $k$:
$$\N(n)=\coprod\limits_{k=0}^{n}\,\N_{k,n},$$  
so that the fixed point set $\N(n)^{\bT}$ consists of total $2^n$ points.

The equivariant $K$-theory $K_{\bT}(\N(n))$ is a module over the ring of equivariant constants: $R=K_{\bT}(\cdot)=\Z[a_1^{\pm },\cdots,a_n^{\pm 1},\hbar^{\pm 1}]$. The localized $K$-theory
\be
\label{Kdec}
K_{\bT}(\N(n))_{loc}=K_{\bT}(\N(n)) \bigotimes\limits_{R} {\mathcal{A}} = \bigoplus_{k=0}^{n} K_{\bT}(\N_{k,n}) \bigotimes\limits_{R} {\mathcal{A}}
\ee
is an ${\mathcal{A}}$-vector space  (${\mathcal{A}}=\Q(a_1,\cdots,a_n,\hbar)$) of dimension $2^n$ spanned by the $K$-theory classes of fixed points $\O_{\fp}$. If $\tau$ is a $K$-theory class then the operation 
of tensor multiplication by $\tau$ (i.e., $\gamma \to \tau \otimes \gamma$) is an $\mathcal{A}$-linear operator
acting in the vector space~(\ref{Kdec}).  These operators are diagonal in the basis of fixed points:
\be
\label{clev}
\tau \otimes \O_{\fp} =\tau(a_{i_1},\cdots, a_{i_k}) \, \O_{\fp} \ \ \ \textrm{for}  \ \ \ \fp=\{i_{1},\cdots, i_{k}\}\subset \{1,\cdots, n\}.
\ee
Note, that the eigenvalue of the operator of multiplication by the class $\tau$ is given by the value of the symmetric polynomial representing this class at the corresponding fixed point, i.e. $\tau(s_1=a_{i_1},\dots,s_k=a_{i_k}) \in \mathcal{A}$. This statement can be conveniently formulated for all fixed points and tautological bundles simultaneously.
\begin{Proposition}
\label{pro1}
The eigenvalues of the operators of multiplication by tautological bundles on ${K_{\bT}(\N_{k,n})}_{loc}$ are given by the values of the corresponding Laurent polynomials $\tau(s_1,\cdots,s_k)$ evaluated at the solutions of the following equations:
\be
\label{clbeth}
\prod\limits_{j=1}^{n}(s_i-a_j) =0,  \ \ \ i=1\cdots k
\ee
with $s_i\neq s_j$.
\end{Proposition}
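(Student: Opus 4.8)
The plan is to reduce the statement entirely to the diagonalization formula \eqref{clev}, which is itself a consequence of Atiyah--Bott--Lefschetz localization for the torus $\bT$ acting on $\N_{k,n}$ with isolated fixed points. First I would recall that, since $\N_{k,n}^{\bT}$ consists of the $n!/k!/(n-k)!$ isolated points labeled by $k$-subsets $\fp=\{x_1,\dots,x_k\}\subset\{a_1,\dots,a_n\}$, the structure sheaf classes $\O_{\fp}$ form an $\mathcal{A}$-basis of the localized $K$-theory $K_{\bT}(\N_{k,n})\otimes_{R}\mathcal{A}$, exactly as recorded in \eqref{Kdec}. So it suffices to describe how multiplication by $\tau$ acts in this basis.

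Second, I would compute the restriction of the tautological bundle $\tb$ to a fixed point $\fp$. By construction $\tb$ is associated to the fundamental $GL(V)$-representation $V$, and at the fixed point corresponding to the coordinate plane spanned by the basis vectors of weights $x_1,\dots,x_k$, the fibre of $\tb$ carries the $\bT$-character $x_1+\cdots+x_k$. Consequently, for any symmetric Laurent polynomial $\tau$ the restriction $\tau|_{\fp}$ equals $\tau(x_1,\dots,x_k)\in\mathcal{A}$. Since tensor product of $K$-theory classes restricts at each fixed point to ordinary multiplication in $K_{\bT}(\mathrm{pt})=\mathcal{A}$, the operator $\tau\otimes(-)$ is diagonal in $\{\O_{\fp}\}$ with eigenvalue $\tau(x_1,\dots,x_k)$ --- this is precisely \eqref{clev}. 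Hence the full set of eigenvalues of $\tau\otimes(-)$ on $K_{\bT}(\N_{k,n})$ is $\{\tau(x_1,\dots,x_k):\{x_1,\dots,x_k\}\subset\{a_1,\dots,a_n\}\}$.

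Finally I would match this eigenvalue set with the values of $\tau$ on the solution set of \eqref{clbeth}. The system $\prod_{j=1}^{n}(s_i-a_j)=0$ for $i=1,\dots,k$ together with the conditions $s_i\neq s_j$ has as its solutions exactly the ordered $k$-tuples of pairwise distinct elements of $\{a_1,\dots,a_n\}$; the symmetric group $S_k$ permutes these tuples freely, and two tuples lying in the same $S_k$-orbit determine the same $k$-subset $\fp$, hence the same fixed point. Because $\tau$ is a \emph{symmetric} Laurent polynomial, $\tau(s_1,\dots,s_k)$ is constant on $S_k$-orbits, so it descends to a well-defined function on $k$-subsets which agrees with $\fp\mapsto\tau|_{\fp}$. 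This yields the asserted correspondence between eigenvalues and values of $\tau$ at solutions of \eqref{clbeth}. The only step requiring any attention --- and it is the sole ``obstacle'' in an otherwise formal argument --- is this last bit of bookkeeping: the bare equations \eqref{clbeth} have $n!/(n-k)!$ ordered solutions while there are only $n!/k!/(n-k)!$ fixed points, so one must genuinely use the symmetry of $\tau$ to see that the surplus solutions contribute no new eigenvalues. (This same $S_k$-symmetry will persist in the quantum deformation studied later, which is why indexing spectra by unordered Bethe roots continues to make sense.)
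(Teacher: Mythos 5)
Your proposal is correct and follows exactly the route the paper takes (indeed the paper offers no formal proof, only the diagonalization formula \eqref{clev} plus the remark that solutions of \eqref{clbeth} with $s_i\neq s_j$ biject with $k$-subsets, i.e.\ with fixed points). Your write-up simply makes explicit the localization basis, the fixed-point restriction of $\tau$, and the ordered-versus-unordered bookkeeping that the paper leaves as obvious.
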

 The solutions of (\ref{clbeth}) with $s_i\neq s_j$  are in one-to-one correspondence with the
$k$-subsets $\{a_{i_1},\cdots,a_{i_k}\}\subset \{a_{1},\cdots, a_{n}\}$ and, therefore, with the set of the fixed points $\N_{k,n}^{\bT}$. 
Theorem \ref{thbt} provides  an elegant generalization of this statement to the case of the  quantum $K$-theory.
The system of equations (\ref{clbeth}) turns out to be the classical limit ($z\rightarrow 0$) of the so-called Bethe ansatz equations~(\ref{beth}).

\subsection{Quantum $K$-theory and Bethe ansatz}
In Section 2 we use the moduli space of \textit{quasimaps} to $\N_{k,n}$ to define certain associative, commutative, one-parametric deformation of its equivariant $K$-theory ring. We denote the deformed tensor product by $\qotimes$ and call the corresponding ring \textit{quantum K-theory} of $\N_{k,n}$. The word ``deformation'' here means that for two $K$-theory classes $A, B$  we have
$$
A \qotimes B = A {\otimes} B + \sum\limits_{d=1}^{\infty}  (A{\qotimes}_d B) \, z^d,
$$
so that if the deformation parameter is equal to zero $z\to0$ (this special case is usually referred to as \textit{classical limit}), the quantum product $\qotimes$ coincides with the classical tensor product $\otimes$. The definition of the quantum product follows closely the definition of the product in quantum cohomology: the classes $A \qotimes_d B\in K_{\bT}(\N(n))$ (quantum corrections) are given by certain  degree $d$ curve counting in $\N_{k,n}$.

Next, in Section \ref{locsec}, (Definition \ref{qtbdef}), for a tautological bundle $\tau \in K_{\bT}(N_{k,n})$ as above, we define a deformation which will be referred to as  \textit{quantum tautological bundle}:\footnote{To the best of our knowledge, this object is introduced in the present paper for the first time.}
$$
\hat{\tau}(z) = \tau+\sum\limits_{d>0}^{\infty} \,\tau_{d} z^d \in K_{\bT}(\N_{k,n})[[z]]
$$
One of the goals of this paper is to study the spectrum of operators of \textit{quantum} multiplication
by \textit{quantum} tautological bundles. The following theorem is the generalization of Proposition \ref{pro1} to the quantum level.
\begin{Theorem}
\label{thbt}
The eigenvalues of operators of quantum multiplication by $\hat{\tau}(z)$ are given by the values of the corresponding Laurent polynomials
$\tau(s_1,\cdots,s_k)$ evaluated at the solutions of the following equations:
\be
\label{beth}
\begin{array}{c}
\ \ \ \prod\limits_{j=1}^{n} \dfrac{s_i-a_j}{\hbar a_j-s_i}=z \,\hbar^{-n/2}\prod\limits_{{j=1}\atop{j\neq i}}^{k}\, \dfrac{s_i \hbar - s_j}{s_i-s_j\hbar} \, , \ \  i=1\cdots k.   \ \ \ \\
\end{array}  \ \ \
\ee
\end{Theorem}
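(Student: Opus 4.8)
The plan is to extract the spectrum from the quasimap vertex functions of $\N_{k,n}$, realize the operators $\mathbf M_{\tau}(z)$ of $\qotimes$-multiplication by $\hat\tau(z)$ as multiplication by $\tau(s)$ inside a universal integral representation of these vertices, and then match the equations cutting out the support of that integral with (\ref{beth}). For each $\bT$-fixed point $\fp\in\N_{k,n}^{\bT}$ I would use the bare vertex function $V^{(\fp)}(z)\in K_{\bT}(\N_{k,n})_{loc}[[z]]$, obtained by $\bT$-localization on the moduli space of quasimaps from $\mathbb P^{1}$ to $\N_{k,n}$ nonsingular at $\infty$ and evaluated at $\fp$, together with its descendant version $V^{(\fp)}_{\tau}(z)$ carrying an insertion of the tautological class $\tau$ over $0\in\mathbb P^{1}$. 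Since $V^{(\fp)}(z)\to\O_{\fp}$ as $z\to 0$, the classes $\{V^{(\fp)}(z)\}_{\fp}$ form a basis of $K_{\bT}(\N_{k,n})_{loc}[[z]]$, and by the quasimap gluing formula together with the definition of $\hat\tau(z)$ from Section 2, $\mathbf M_{\tau}(z)$ is the unique operator with $\mathbf M_{\tau}(z)\,V^{(\fp)}(z)=V^{(\fp)}_{\tau}(z)$ for every $\fp$. (For line bundles the matrix of bare vertices in addition solves a $q$-difference equation in $z$ whose ``classical part'' is $\mathbf M_{\det\tb}(z)$, which pins down the $z$-dependence.) Thus the entire spectrum is encoded in the localization formulas for $V^{(\fp)}$ and $V^{(\fp)}_{\tau}$.

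Localization presents $V^{(\fp)}(z)$ as an explicit $q$-hypergeometric series in $z$, the degree-$\mathbf d$ term being $z^{|\mathbf d|}$ times a ratio of $\h$-shifted products in the Chern roots $x_1,\dots,x_k$ of $\tb$ and the parameters $a_1,\dots,a_n$; the descendant vertex is the same series with the factor $\tau(s_1,\dots,s_k)$ inserted on the $q$-shifted roots. Rewriting the series as a Jackson-type contour integral
\[
V^{(\fp)}(z)=\oint\ \prod_{i=1}^{k}\frac{ds_i}{2\pi i\,s_i}\ \ \mathbf E_{\fp}(s)\ \Phi(s,a,z,\h),
\]
with a universal master function $\Phi$ (the $K$-theoretic Yang--Yang function of the $XXZ$ chain) and a stable-envelope weight $\mathbf E_{\fp}$, where only $\mathbf E_{\fp}$ depends on $\fp$, one sees that every $\mathbf M_{\tau}(z)$ acts on the common integrand by multiplication by $\tau(s)$, that these operators commute, and that the integral localizes on the $q$-difference critical set of $\Phi$. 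Computing $s_i\,\partial_{s_i}\log\Phi$ identifies that set with the solutions of (\ref{beth}): the $\prod_{j}(s_i-a_j)/(\h a_j-s_i)$ factor comes from the framing $\tw$, the $\prod_{j\ne i}(s_i\h-s_j)/(s_i-s_j\h)$ factor from the pairwise interaction of the roots, and $z\,\h^{-n/2}$ is the twist. At each nondegenerate solution $s^{*}(z)$ of (\ref{beth}) the integral therefore yields a common eigenvector of the $\mathbf M_{\tau}(z)$ with eigenvalue $\tau(s^{*}(z))$.

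To finish, I would run a counting/completeness argument. For generic $a_i,\h$ the classical system (\ref{clbeth}) has exactly $\binom{n}{k}$ simple solutions, the $k$-subsets of $\{a_1,\dots,a_n\}$, as in Proposition \ref{pro1}; by the implicit function theorem each extends uniquely to a solution $s^{*}(z)\in\C[[z]]$ of (\ref{beth}) which stays simple and nondegenerate for small $z$. The associated eigenvectors from the previous step deform the fixed-point classes $\O_{\fp}$, so they span the $\binom{n}{k}$-dimensional space $K_{\bT}(\N_{k,n})_{loc}$, and hence the numbers $\tau(s^{*}(z))$ are \emph{all} of the eigenvalues of $\mathbf M_{\tau}(z)$. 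Taking the union over $0\le k\le n$ gives the statement on $\N(n)$.

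The main obstacle is the middle step: one must prove that $\hat\tau(z)$, which is defined purely through quasimap counts, genuinely acts as multiplication by $\tau(s)$ inside a single integral representation shared by the bare and descendant vertices — equivalently, that the ``quantum-corrected Chern roots'' controlling descendant invariants coincide with the Bethe roots. This requires careful handling of the degeneration/gluing formula and of the rationality of the vertex, and it is cleanest when the $z$-difference equation satisfied by the vertex matrix is matched with the quantum Knizhnik--Zamolodchikov equation of the inhomogeneous $XXZ$ chain; the geometric $\fsh$-action on $K_{\bT}(\N(n))$, with its associated $R$-matrices and stable envelopes, is the natural input for that matching and also supplies the analytic control (convergence in $z$, genericity of $a_i,\h$) used in the completeness step.
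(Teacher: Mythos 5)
Your overall route --- localization on quasimap spaces, the Mellin--Barnes integral representation of the vertex, and the $q\to 1$ saddle-point analysis producing exactly the factors $\prod_j(s_i-a_j)/(\hbar a_j-s_i)$, $\prod_{j\neq i}(s_i\hbar-s_j)/(s_i-s_j\hbar)$ and the twist $z\hbar^{-n/2}$ --- is the same as the paper's. But the step you yourself flag as ``the main obstacle'' is precisely the step that carries the proof, and as written it has a genuine gap. Your claim that $\mathbf M_{\tau}(z)\,V^{(\fp)}(z)=V^{(\fp)}_{\tau}(z)$ cannot hold at finite $q$: the operator of quantum multiplication by $\hat\tau(z)$ is $q$-independent, while the bare vertices are elements of $K_{\bG}(\N_{k,n})_{loc}[[z]]$ that are genuinely singular at $q=1$. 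Without a correct replacement for this identity, the saddle-point computation only tells you the asymptotics of a ratio of two integrals; it does not by itself show that $\tau(s^{*}(z))$ is an eigenvalue of the quantum product, nor does the integral ``yield a common eigenvector.''

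The paper closes this gap with the capping operator $\Psi(z)$: one has $\hat V^{(\tau)}(z)=\Psi(z)V^{(\tau)}(z)$ with $\hat\tau(z)=\lim_{q\to1}\hat V^{(\tau)}(z)$, and $\Psi(z)$ is the fundamental solution of the $q$-difference equation $\Psi(qz)=\M(z)\Psi(z)\O(1)^{-1}$ whose $q=1$ specialization $\M(z)|_{q=1}=\widehat{\O(1)}(z)$ is itself an operator of quantum multiplication (Theorem \ref{Mth}). This forces the columns of $\Psi(z)$ to behave as $\exp\bigl(\tfrac{1}{q-1}\int d_qz\,\ln l_{\fp}(z)\bigr)\bigl(\psi_{\fp}(z)+\cdots\bigr)$ with $\psi_{\fp}(z)$ the eigenvectors of quantum multiplication; finiteness of the capped vertex at $q=1$ plus linear independence of the $\psi_{\fp}(z)$ then forces $V^{(\tau)}_{\fp}(z)=(\tau_{\fp}(z)v_{\fp}(z)+\cdots)\exp\bigl(-\tfrac{1}{q-1}\int d_qz\,\ln l_{\fp}(z)\bigr)$, i.e.\ $\tau_{\fp}(z)=\lim_{q\to1}V^{(\tau)}_{\fp}(z)/V^{(1)}_{\fp}(z)$. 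Only after this identity is established does the saddle-point evaluation of the integral give the Bethe equations and the eigenvalue $\tau(s_1,\cdots,s_k)$. Note also that this mechanism produces one eigenvector $\psi_{\fp}(z)$ per fixed point automatically, so your separate completeness/deformation count, while reasonable, is not needed; and your proposed alternative closure via matching with the quantum Knizhnik--Zamolodchikov equation is not what is required here --- the relevant difference equation is in $z$, not in the $a_i$, and its $q=1$ limit must be identified with a quantum multiplication operator, which is the content of Theorem \ref{Mth}.
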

\noindent
When $z=0$ we obtain the statement of  Proposition \ref{pro1}.

\subsection{XXZ model and Baxter $Q$-operator } A specialist can immediately recognize that (\ref{beth}) are nothing but the \textit{Bethe ansatz} equations for the so-called $XXZ$ spin chain. Let us briefly recall some basic facts about this quantum integrable system, see also \cite{resh}, \cite{kor} for a more detailed outline.

Let us consider a system of $n$ interacting magnetic dipoles (usually refered to as \textit{spins}) on a $1$-dimensional periodic lattice. Each spin  can take two possible configurations ``up'' and ``down'', such that the space of the quantum states of this system has dimension
$2^n$:
\be
\label{qsp}
{\mathcal{H}} = \C^2\otimes\C^2 \otimes \cdots \otimes \C^2.
\ee
In this system of spins only the neighboring ones (with labels $i$ and $i+1$) can interact. The energy of the interaction
is described by the following Hamiltonian:
\be
\label{XXZham}
H_{2}=-\sum\limits_{i=1}^{n}\, \sigma^{i}_x \otimes \sigma^{i+1}_x +\sigma^{i}_y \otimes \sigma^{i+1}_y +\Delta \, \sigma^{i}_z \otimes \sigma^{i+1}_z, \ \ \
\ee
where $\Delta=\hbar^{1/2}+\hbar^{-1/2}$  is the parameter of anisotropy and $\sigma^{i}_m$ are the standard Pauli matrices acting in the $i$-th factor of (\ref{qsp}).
The periodic boundary conditions
are imposed by identifying the first with $(n+1)$-th spin space. Up to a gauge transformation such identification is given by a diagonal matrix. Modulo an irrelevant scalar this matrix can be chosen to be in the following form:
$$
\left(\begin{array}{cc}
z&0\\
0&1
\end{array}\right):\ \ \C^2_{(1)} \rightarrow \C^2_{(n+1)}.
$$
This free parameter $z$, defining the periodic boundary condition will play the crucial role in this paper, namely it  is the parameter of deformation in the quantum $K$-theory.

The quantum system of spins governed by the Hamiltonian (\ref{XXZham}) is called the quantum $XXZ$ spin chain. The most important feature of this  model is its \textit{integrability}, which implies the existence of mutually commuting higher Hamiltonians $H_{n}, \ \ n=1\cdots \infty$. For example:
$$
S_z=\sum\limits_{i=1}^{n} \, \sigma^{i}_z \ \ \
$$
is the operator of total spin commuting with  (\ref{XXZham}). This operator provides the grading on the space of states:
$$
{\mathcal{H}} = \bigoplus\limits_{k=0}^{n}\, {\mathcal{H}}_k, \ \ \ {\mathcal{H}}_k= \{ v\in {\mathcal{H}} : S_z(v) = (n-2k) v \}.
$$
Obviously $\dim {\mathcal{H}}_k = n!/(k!(n-k)!)$ thus this graded sum can be identified with (\ref{Kdec}).

The Hamiltonians $H_k$ can be obtained from the asymptotic expansion of the logarithm of the {\it transfer matrix} $\mathrm{T}(x)\in {\rm End}(\mathcal{H})(z)[x]$. An important object in the study of XXZ spin chain is the so-called {\it Baxter Q-operator} $\mathrm{Q}(x)\in {\rm End}(\mathcal{H})(z)[x]$. Its eigenvalues $Q(x)$ and the eigenvalues of the transfer matrix $T(x)$ obey the following functional relation, known as TQ-relation:
\begin{eqnarray} \nonumber
{T}(x){Q}(x)=\alpha(z,x){Q}(\h x)+\delta(z,x){Q}(\h^{-1}x), 
\end{eqnarray}
where $\alpha(z,x), \delta(z,x)$ are certain rational functions. 
The  expansion of the Baxter Q-operator
$$
{\rm Q}(x)=\sum\limits_{i=0}^{\infty} x^i H^{nloc}_i
$$
produces nonlocal Hamiltonians $H^{nloc}_i$. The  original ``physical'' Hamiltonians $H_i$ of the $XXZ$ model are related to nonlocal Hamiltonians $H^{nloc}_i$ via the TQ relation. 
The physical problem is to find the joint spectrum of $H_{i}$ (equivalently, $H^{nloc}_i$). 
The solution is given by:
\begin{Theorem} (see e.g.\cite{resh})
\label{xxzdes}
The eigenvalues of the Baxter's operator ${\rm Q}(x)$ are given by $\prod\limits_{i=0}^{k}(1+x s_i)$ where
$s_i$ are the solutions of the Bethe equations (\ref{beth}).
\end{Theorem}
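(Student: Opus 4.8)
\textit{Proof idea (algebraic Bethe ansatz and the $TQ$--relation).} This is a classical statement about the $XXZ$ chain, and the natural route is the algebraic Bethe ansatz together with the Baxter construction of $Q(x)$ from an infinite--dimensional auxiliary module. First I would realize ${\mathcal H}$ of (\ref{qsp}) together with the commuting flows of (\ref{XXZham}) through the trigonometric $R$--matrix of $\fsh$: form the monodromy matrix $T(u)=L_{0n}(u/a_n)\cdots L_{01}(u/a_1)$ acting on $\C^2_{\rm aux}\otimes{\mathcal H}$, where $L$ is the Lax operator of the two--dimensional evaluation representation and the $a_j$ are the inhomogeneities, and twist the trace with the matrix $\mathrm{diag}(z,1)$ implementing the periodic boundary condition, so that $t(u)=\Tr_{\rm aux}\!\big(\mathrm{diag}(z,1)\,T(u)\big)$ is the generating function of $H_i$. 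Writing $T(u)$ in $2\times2$ block form with entries $A(u),B(u),C(u),D(u)$, one checks that the reference state $|0\rangle=|\!\uparrow\rangle^{\otimes n}$ is highest weight: $C(u)|0\rangle=0$ and $A(u)|0\rangle$, $D(u)|0\rangle$ are explicit scalar multiples of $|0\rangle$ whose eigenvalues are products over $j$ of the vacuum factors containing $u$, $a_j$ and $\hbar$.

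Next I would introduce the Bethe vectors $\Psi(s_1,\dots,s_k)=B(s_1)\cdots B(s_k)|0\rangle\in{\mathcal H}_k$ and use the $RTT$ commutation relations of the Yang--Baxter algebra to push $A(u)$ and $D(u)$ through the product of $B$'s. This yields one ``wanted'' term proportional to $\Psi$ plus $k$ ``unwanted'' terms; requiring the unwanted terms to cancel is precisely the system (\ref{beth}), with the twist $z$ and the normalization factor $\hbar^{-n/2}$ appearing exactly where the reference puts them. On the subspace of Bethe vectors one then reads off $t(u)\,\Psi=\Lambda(u)\,\Psi$ with $\Lambda(u)$ the usual sum of two products over the Bethe roots, and the joint spectrum of the higher Hamiltonians is thereby parametrized by solutions of (\ref{beth}); completeness of the Bethe vectors in ${\mathcal H}_k$ for generic $z,a_j,\hbar$ then identifies this with the full spectrum.

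Finally, for $Q(x)$ itself I would build it as the (renormalized) transfer matrix associated with the $q$--oscillator, or prefundamental, representation of the Borel subalgebra of $\fsh$, i.e. the trigonometric analogue of the Bazhanov--Lukyanov--Zamolodchikov operator. Commutativity $[Q(x),Q(x')]=[Q(x),t(u)]=0$ follows from the universal $R$--matrix, and the fusion of the $q$--oscillator module with the two--dimensional evaluation module gives the Baxter relation $t(u)\,Q(u)=\Phi^{+}(u)\,Q(\hbar u)+\Phi^{-}(u)\,Q(\hbar^{-1}u)$, with $\Phi^{\pm}$ assembled from $\prod_j(u-a_j)$, $\prod_j(\hbar a_j-u)$ and the twist $z$. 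Since $Q$ and $t$ are simultaneously diagonalized on the Bethe vectors, the eigenvalue of $Q$ is, after dividing out the universal vacuum factor, a polynomial whose zeros are forced by the $TQ$--relation to sit exactly at the Bethe roots; comparing normalizations and passing to the multiplicative variable $x$ gives the eigenvalue $\prod_{i}(1+x\,s_i)$ over the $k$ Bethe roots. The main obstacle is this last step: making the $q$--oscillator auxiliary space and the operator $Q(x)$ rigorous (convergence of the trace over an infinite--dimensional module, polynomiality of $Q(x)$ on each ${\mathcal H}_k$, and simplicity of its zeros), and establishing completeness of the Bethe ansatz so that these eigenvalues exhaust the spectrum; everything else reduces to finite $RTT$ computations.
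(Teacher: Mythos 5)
Your proposal follows essentially the same route as the paper: Section 4 constructs the twisted monodromy matrix and transfer matrix, invokes the algebraic Bethe ansatz for the Bethe vectors $B(v_1)\cdots B(v_k)\Omega_+$, builds $Q_{\pm}(x)$ as normalized traces of the universal $R$-matrix over the $q$-oscillator (prefundamental) representations, derives the $TQ$- and Wronskian relations from the Grothendieck-ring factorization of $\pi_n^+(x)$, and fixes the scalar normalization by solving a functional equation so that the eigenvalue on a Bethe vector becomes exactly the polynomial $\prod_i(1+x s_i)$ (up to the paper's sign and $\hbar\to\hbar^{-1}$, $Z^2\to(-1)^n z$ conventions). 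The technical caveats you flag — convergence of the oscillator trace, polynomiality, and completeness — are likewise treated lightly in the paper, which defers to the standard references for the Bethe-ansatz part.
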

This means that the eigenvalues of $H^{nloc}_i$ are given by the values of $k$-th elementary symmetric function, evaluated
on the solutions of Bethe equations. In view of Theorem \ref{thbt} these are the same as the eigenvalues of operator of the quantum multiplication by the quantum $i$-th exterior power $\widehat{\Lambda^{\! i} \tb}(z)$ in the quantum $K$-theory of $\N_{k,n}$.

\subsection{Quantum group structure of the $XXZ$ model and $K$-theory of $\N_{k,n}$\label{qgst}}
We see that the spectrum of quantum tautological bundles in quantum $K$-theory of $\N_{k,n}$ coincides with spectrum of observables in the $XXZ$ model. Let us explain the connection between the quantum physics of the $XXZ$ model and quantum geometry of $\N(n)$.
It is well known that the symmetries of the $XXZ$ spin chain are described by the \textit{quantum affine group} $\fsh$ (see e.g. \cite{cp}, \cite{resh}). In particular, the Hilbert space of quantum states of the $XXZ$ model is an irreducible $\fsh$-module:
\be
\label{hilbsp}
{\mathcal{H}}_{XXZ} = \C^{2}(a_1)\otimes \C^2(a_2) \otimes \cdots \otimes \C^2(a_n),
\ee
where $\C^{2}(a_i)$ are the two-dimensional evaluation representations of $\fsh$ \cite{cp}.

It was shown in \cite{vasserot} (see also \cite{os} for an alternative construction) that there is a natural action of $\fsh$ on the equivariant $K$-theory $K_{\bT}(\N(n))$. In short, as a $\fsh$-module the $K$-theory of $\N(n)$ is isomorphic to
the Hilbert space of the $XXZ$ spin chain: $K_{\bT}(\N(n)) = {\mathcal{H}}_{XXZ}$. 
The evaluation parameters of representation $a_i$ and the ``Planck constant''  $\hbar$  are the counterparts of the equivariant characters of the torus $\bT$ on the side of equivariant $K$-theory. 
We refer to the section 5 for details regarding this identification. 
 
Our claim is that the ring of quantum tautological bundles coincides with the  ring of the nonlocal Hamiltonians of the $XXZ$ spin chain under this isomorphism. Namely, let us consider the $K$-theory class of $x$-weighted exterior algebra of $\tb$:
$$
\Lambda_x = \bigoplus_{m=0}^{\infty}\, x^m \Lambda^{\!m}( \tb) =\Lambda^{\! \bullet}_{x} \tb.
$$
\begin{Theorem} \label{baxther}
The operator of quantum multiplication by the quantum tautological bundle $\hat{\Lambda}_x(z)$ coincides with the Baxter operator for  the $XXZ$ spin chain
under the identification $K_{\bT}(\N(n)) = {\mathcal{H}}_{XXZ}$ as $\fsh$-modules:
$$
\hat{\Lambda}_x(z)\circledast\cdot ={\rm Q}(x).
$$
\end{Theorem}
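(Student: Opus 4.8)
The plan is to first match the \emph{spectra} of $\hat\tau_x(z)$ and $Q(x)$ and then upgrade this to an equality of operators. To begin, I would note that the class $\tau_x=\Lambda^{\!\bullet}_{x}\tb$ restricts to a torus fixed point $\fp=\{x_1,\dots,x_k\}$ as the product $\prod_{i=1}^{k}(1+x\,x_i)$, so the symmetric Laurent polynomial attached to it is $\tau_x(s_1,\dots,s_k)=\prod_{i=1}^{k}(1+x s_i)$. By Theorem~\ref{thbt} the eigenvalues of quantum multiplication by $\hat\tau_x(z)$ on $K_{\bT}(\N_{k,n})$ are therefore the numbers $\prod_{i=1}^{k}(1+x s_i)$, with $(s_1,\dots,s_k)$ ranging over the solutions of the Bethe equations (\ref{beth}); and by Theorem~\ref{xxzdes} these are exactly the eigenvalues of $Q(x)$ on the weight subspace ${\mathcal{H}}_k$. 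Since the Bethe solutions index bases on both sides and $\dim K_{\bT}(\N_{k,n})=\dim{\mathcal{H}}_k=\binom{n}{k}$, under the identification $K_{\bT}(\N(n))={\mathcal{H}}_{XXZ}$ the two operators have the same eigenvalues with the same multiplicities.

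Next I would promote this to an operator identity through simplicity of the spectrum. For generic $z,\hbar,a_1,\dots,a_n$ the system (\ref{beth}) has $\binom{n}{k}$ distinct solutions, and the joint eigenvalues of the whole quantum $K$-theory ring (obtained by varying $\tau$ in Theorem~\ref{thbt}) separate them; hence the commuting operators of quantum multiplication, in particular all the coefficients $\widehat{\Lambda^{\!i}\tb}(z)$ of $\hat\tau_x(z)$, are simultaneously diagonalizable with a canonical eigenbasis $\{\xi_\alpha\}$ of $K_{\bT}(\N(n))$ indexed by the Bethe solutions, on which $\hat\tau_x(z)$ acts by the scalar $\prod_i(1+x s_i^{(\alpha)})$. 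On the spin chain side, for generic twist $z$ the on-shell Bethe vectors $\{\psi_\alpha\}$ form an eigenbasis of ${\mathcal{H}}_{XXZ}$ diagonalizing $Q(x)$ and the higher Hamiltonians simultaneously, with $Q(x)$ acting by the same scalar $\prod_i(1+x s_i^{(\alpha)})$. Thus the theorem reduces to showing that the module isomorphism $K_{\bT}(\N(n))={\mathcal{H}}_{XXZ}$ carries the line $\C\,\xi_\alpha$ onto $\C\,\psi_\alpha$ for every $\alpha$.

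This last point is the crux, and here is how I would handle it. Because ${\mathcal{H}}_{XXZ}$ is irreducible as an $\fsh$-module, the isomorphism $K_{\bT}(\N(n))\cong{\mathcal{H}}_{XXZ}$ is canonical only up to an overall scalar, so coincidence of spectra by itself does not preclude a nontrivial rearrangement of eigenlines. I would fix the eigenbasis by identifying the commutative algebra generated by the quantum multiplication operators with the Bethe (transfer-matrix) subalgebra of $\fsh$ acting on ${\mathcal{H}}_{XXZ}$: this subalgebra has simple joint spectrum for generic parameters, so eigenlines carrying the same joint eigenvalue must agree, and the previous paragraph then closes the argument. Concretely this means realizing the geometrically (quasimap) defined operator $\hat\tau_x(z)$ as the image of a universal element of $\fsh$, namely the element obtained from the universal $R$-matrix evaluated on a $z$-twisted prefundamental ($q$-oscillator type) auxiliary representation, whose expansion coefficients in $x$ are precisely the Baxter Hamiltonians $H_i$ --- a statement made fully explicit by the universal combinatorial formula of Theorem~\ref{Bform}. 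Equivalently, one can observe that on both sides the eigenvectors $\xi_\alpha$ and $\psi_\alpha$ are specializations at the Bethe roots of one and the same off-shell Bethe vector built from the $\fsh$-coproduct, which at once puts both operators in the same basis with the eigenvalues already computed.

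An alternative route, which I would keep in reserve, is to use Baxter's $TQ$-relation: identify the $XXZ$ transfer matrix with a distinguished operator of quantum multiplication, check that $\hat\tau_x(z)$ obeys the same $TQ$-relation with the correct behaviour as $x\to 0,\infty$, and invoke uniqueness of its polynomial solution at generic parameters. In either approach the expected main obstacle is not the eigenvalue comparison --- which is immediate from Theorems~\ref{thbt} and~\ref{xxzdes} together with the elementary character computation --- but the identification of the \emph{operators} themselves: proving that the quasimap-defined quantum multiplication $\hat\tau_x(z)$ actually lies in the image of $\fsh$ and coincides with the distinguished $R$-matrix / transfer-matrix element, i.e.\ that the quantum $K$-theory ring \emph{is} the Bethe subalgebra of the $XXZ$ chain. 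Establishing this genuine algebraic coincidence (and not merely a numerical one), with the classical limit $z\to 0$ used to rigidify the identification, is where the real work lies.
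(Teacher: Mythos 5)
Your eigenvalue comparison is exactly the right (and easy) first step: Theorem \ref{thbt} gives $\prod_i(1+xs_i)$ over Bethe solutions on the geometric side, Theorem \ref{xxzdes} gives the same on the spin-chain side, and you are right that this by itself only shows the two operators are conjugate. You have also correctly located the crux --- matching the eigen\emph{lines} --- but the argument you offer for that step does not close. Invoking Theorem \ref{Bform} is circular: in the paper that universal formula is not an independent input but is established simultaneously with, and as part of the proof of, the identification $\hat{\tau}_x(z)=Q(x)$; it cannot be assumed when proving Theorem \ref{baxther}. Your fallback routes are likewise left as programmes rather than proofs: the $TQ$-relation route presupposes that the transfer matrix has already been identified with a quantum multiplication operator (which is a statement of the same kind as the one to be proven), and the ``classical limit rigidifies'' remark does not work on its own --- at $z=0$ both families of eigenlines degenerate to the fixed-point basis, and agreement of two $z$-dependent eigenbases at $z=0$ says nothing about their agreement for $z\neq 0$ unless one already has an operator identity linking the two families at all $z$.

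The concrete ingredient the paper uses, and which is missing from your proposal, is Theorem \ref{Mth} combined with the explicit formula from Section 7.3.7 of \cite{os}: the operator of quantum multiplication by $\widehat{\O(1)}(z)$ is the $q=1$ specialization of the quantum difference operator $\M(z)$, and its explicit expression $B(z)\,\O(1)$ in Drinfeld generators coincides, level by level, with the corresponding coefficient $W^Z_m$ of the $R$-matrix-trace expansion of the $Q$-operator (after $\hbar\to\hbar^{-1}$, $K\to K^{-1}$, $z=Z^2$). This one distinguished operator is computed \emph{independently} on both sides --- geometrically via the quantum difference equation, representation-theoretically via the trace over the oscillator module --- and since it has simple spectrum for generic parameters and commutes with both families, it forces the quantum-multiplication eigenbasis and the Bethe eigenbasis to coincide; the eigenvalue match then finishes the proof. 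Without this (or some equivalent independently established anchor), your argument stops one step short of the theorem.
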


\subsection{Universal formula and the dynamical quantum affine Weyl group}
The operators of quantum multiplication by  $\hat{\tau}(z)$  satisfy the following Theorem.
\begin{Theorem}
The operators of quantum multiplication by $\hat{\tau}(z)$ are the universal elements in  $\fsh[[z]]$, i.e. they do not depend on $k,n$ defining Nakajima variety. 
\end{Theorem}

As we explained above, the equivariant $K$-theory $K_{\bT}(\N(n))$ is a natural $\fsh$ module (\ref{hilbsp}).
By evaluating the universal element $\hat{\tau}(z)$ in $K_{\bT}(\N(n))$ we obtain the operator of quantum multiplication
by $\hat{\tau}(z)$ in this representation. Universality in particular means that $\hat{\tau}(z) \in \fsh[[z]]$ does not depend on $n$, i.e. on the choice of representation.
Moreover, one can check that the operators of quantum multiplication by
 \textit{classical} tautological classes $\tau$ do not have such a universal representation. This is an indication  that the quantum bundles $\hat{\tau}(z)$ are more natural objects in the context of quantum $K$-theory.

The proof of the universality theorem for the generic operators,
follows immediately from the appropriate formulas for the operators of quantum multiplication by the quantum exterior powers $\widehat{\Lambda^{\! i}\tb}(z)$ (see Section 5.7).

Let $E_r, F_r, H_r, K$ be the standard Drinfeld's generators of $\fsh$, then the following theorem holds.
\begin{Theorem}
For arbitrary $n$ and $k$ the operator of quantum multiplication by the quantum $l$-th exterior power of tautological bundle
is given by the following universal formula:
\label{Bform}
$$
\widehat{\Lambda^{\! l} \tb}(z)\circledast \cdot = \Lambda^{\! l} \tb + a_1(z) \, F_0 \Lambda^{\! l-1} \tb E_{-1}+ a_2(z) \, F_{0}^2 \Lambda^{\! l-2} \tb E_{-1}^2 + \cdots
+ a_l(z) F_{0}^l  E_{-1}^l
$$
with
$$
a_m(z)= \frac{(\h-1)^m\ \h^{\frac{m(m+1)}{2}} K^{m}}{(m)_{\h}!\prod\limits_{i=1}^
{m}(1-(-1)^nz^{-1}\h^{i} K)}, \ \ \ \textrm{for}  \ \ \ (m)_{\h}=\dfrac{1-\h^m}{1-\h}, \ \ (m)_{\h}!=(1)_{\h}\cdots (m)_\h.
\nonumber
$$
\end{Theorem}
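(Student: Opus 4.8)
The plan is to reduce the asserted identity to a comparison of eigenvalues on a common eigenbasis, and then to an explicit computation inside the $XXZ$ chain. Write $\mathcal{B}_l(z)$ for the element on the right-hand side of the claimed formula. Since $E_{-1}$, $F_0$, $K$ are Drinfeld generators and the $\Lambda^{\!j}\tb$ are concrete operators, $\mathcal{B}_l(z)$ is a well-defined element of $\fsh[[z]]$ acting on $K_\bT(\N(n)) = \mathcal{H}_{XXZ}$ through the geometric action of \cite{vasserot}, and its expression involves neither $n$ nor $k$; hence once we know it equals the quantum multiplication operator in every representation, the universality statement follows for free. By Theorem~\ref{thbt} applied to $\tau = \Lambda^{\!l}\tb$, whose value on the Chern roots is the elementary symmetric polynomial $e_l(x_1,\dots,x_k)$, the operator of quantum multiplication by $\widehat{\Lambda^{\!l}\tb}(z)$ is diagonalizable on each $K_\bT(\N_{k,n})$ with eigenvalues $e_l(s_1,\dots,s_k)$ at the solutions of the Bethe equations~\rf{beth}; as all quantum tautological bundles are diagonalized simultaneously, their common eigenvectors are the Bethe vectors $\Psi(s_1,\dots,s_k)$ of the inhomogeneous $XXZ$ chain. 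It therefore suffices to show that each $\Psi(s_1,\dots,s_k)$ is an eigenvector of $\mathcal{B}_l(z)$ with eigenvalue $e_l(s_1,\dots,s_k)$; by completeness of the Bethe ansatz for the finite chain the $\Psi$'s span $\mathcal{H}_{XXZ}$, so this forces $\mathcal{B}_l(z)$ to coincide with the quantum multiplication operator.

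To run the eigenvalue check I would use the algebraic Bethe ansatz: in the weight-$k$ subspace, $\Psi(s_1,\dots,s_k)$ is, up to the geometric (stable-basis) normalization, the result of applying the string of creation operators $B(s_1)\cdots B(s_k)$ of $\fsh$ to the highest-weight vector. One then evaluates each summand $a_m(z)\,F_0^{\,m}\,\Lambda^{\!l-m}\tb\,E_{-1}^{\,m}$ on $\Psi(s_1,\dots,s_k)$ in three steps: first, $E_{-1}^{\,m}$ is commuted through the $B(s_i)$-string, producing a sum over $m$-element subsets $I\subset\{1,\dots,k\}$ of ``removed'' rapidities, each term being $\Psi$ for the complementary roots times an explicit rational prefactor in the $s_i$, $\h$ and the $a_j$, where the Bethe equations~\rf{beth} are invoked to annihilate the off-shell contributions; next, the classical operator $\Lambda^{\!l-m}\tb$ acts on the resulting weight-$(k-m)$ vector, contributing $e_{l-m}$ of the surviving roots; finally, $F_0^{\,m}$ restores the weight and returns a multiple of $\Psi(s_1,\dots,s_k)$. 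Summing over $m$ and over all subsets $I$, the accumulated prefactors must collapse --- through an $\h$-binomial, Newton-type summation identity together with the Bethe equations --- to the single symmetric polynomial $e_l(s_1,\dots,s_k)$, and matching the combinatorial coefficients of this collapse is exactly what forces the stated values of $a_m(z)$, including the factor $(m)_{\h}!$ and the product $\prod_{i=1}^m\bigl(1-(-1)^n z^{-1}\h^i K\bigr)$ in the denominator --- the sign $(-1)^n$ entering through the determinant of the boundary twist in the dictionary between $z$ and the periodicity parameter of the chain.

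It is convenient to package this in the generating function $\hat\tau_x(z) = \sum_{l\geq 0} x^l\,\widehat{\Lambda^{\!l}\tb}(z)$, which by Theorem~\ref{baxther} is the Baxter operator $Q(x)$: the claimed formula is then equivalent to a closed expression for $Q(x)$ whose eigenvalue on $\Psi(s_1,\dots,s_k)$ is $\prod_{i=1}^k(1+x s_i)$ (Theorem~\ref{xxzdes}), and the resummation above becomes the statement that $\sum_{m\geq 0} a_m(z)\,x^m\,F_0^{\,m}\,(\Lambda^{\!\bullet}_x\tb)\,E_{-1}^{\,m}$ acts on a Bethe vector as multiplication by $\prod_i(1+x s_i)$. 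As an alternative route for small $l$, one can first establish $\widehat{\tb}(z) = \tb + a_1(z)\,F_0 E_{-1}$ directly --- a low-rank computation, cleanest at $n=1$ where $\N(1)$ is a pair of points --- and then propagate to all $l$ by a quantum-corrected Newton/Pieri-type recursion among the $\widehat{\Lambda^{\!l}\tb}(z)$, which pins down the $a_m(z)$ inductively.

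The main obstacle is the combinatorial resummation of the middle step: controlling how $E_{-1}^{\,m}$ and $F_0^{\,m}$ act across the $B(s_i)$-string in the correct geometric normalization, proving that the off-shell terms cancel, and checking that the on-shell terms sum to $e_l(s_1,\dots,s_k)$ with precisely the stated coefficients --- this is where all of the $q$-combinatorics resides. A secondary point requiring care is the reduction to eigenvalues itself: genericity of $a_1,\dots,a_n$, $\h$, $z$ so that the Bethe spectrum is simple and the Bethe ansatz is complete for the finite chain, together with the verification that $\mathcal{B}_l(z)$ really acts on $\mathcal{H}_{XXZ}$ through the $\fsh$-action of \cite{vasserot} in the normalization that makes Theorem~\ref{thbt} applicable.
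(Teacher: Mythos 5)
Your overall strategy --- reduce the identity to a comparison of eigenvalues and eigenvectors, with Theorem~\ref{thbt} supplying the spectrum $e_l(s_1,\dots,s_k)$ of the geometric side and the algebraic Bethe ansatz supplying the spectrum of the right-hand side --- is the same as the paper's. But the two places where the actual work happens are handled differently, and in both places your plan leaves a genuine gap.

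First, the combinatorial engine. You propose to verify that $\mathcal{B}_l(z)$ has eigenvalue $e_l(s_1,\dots,s_k)$ on a Bethe vector by commuting $E_{-1}^{\,m}$ and $F_0^{\,m}$ through the string $B(s_1)\cdots B(s_k)$ and resumming; you yourself identify this as the main obstacle and do not carry it out. The paper never performs this computation. Instead it realizes the generating function $\sum_m x^m\,a_m(z)F_0^m(\cdot)E_{-1}^m$ as the normalized trace of the Khoroshkin--Tolstoy factorized universal $R$-matrix over the oscillator (prefundamental) representation $\rho_+$. The decomposition $\rho_-(x\h^{-(n+1)/2})\cdot\rho_+(x\h^{(n+1)/2})=\omega_{-n}(1-\omega_{-2})^{-1}\pi_n^+(x)$ in the Grothendieck ring yields the $TQ$-relation and quantum Wronskian, which force the eigenvalues $\prod_i(1-xs_i)$ on Bethe vectors without any commutation through the $B$-string; the closed form of the coefficients $a_m(z)$ then drops out of the elementary trace $\mathrm{tr}(e^{\alpha H}\mathcal{E}_+^m\mathcal{E}_-^m)$ and the $q$-binomial formula. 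So the hard $q$-combinatorics is done once, in a two-dimensional oscillator computation, rather than in the $k$-body resummation you sketch.

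Second, and more seriously, the identification of eigenbases. Theorem~\ref{thbt} gives the eigenvalues of the geometric operators but says nothing about whether their eigenvectors $\psi_{\fp}(z)$ are the algebraic-Bethe-ansatz vectors $B(s_1)\cdots B(s_k)\Omega_+$ under the geometric $\fsh$-action of \cite{vasserot}. Matching eigenvalues on two a priori different eigenbases proves nothing, and completeness/simplicity of the Bethe spectrum does not by itself tell you which basis the geometric operators diagonalize in. The paper closes this gap by importing the explicit universal formula for the quantum difference operator from Section 7.3.7 of \cite{os}, combined with Theorem~\ref{Mth}: the operator of quantum multiplication by $\widehat{\O(1)}(z)$ is already known to equal $B(z)\O(1)$, a universal element whose eigenvectors are the Bethe vectors, and this anchors the identification of $\psi_{\fp}(z)$ with $B(s_1)\cdots B(s_k)\Omega_+$. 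Your plan flags this point as "requiring care" but supplies no substitute for that input, so as written the argument does not close.
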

Here $\widehat{\Lambda^{\! i} \tb}(z)$  ($\Lambda^{\! i} \tb $)  stands for the operators of quantum (classical) multiplication
by the quantum (classical) exterious powers.  Note, that
in the classical limit $z\to 0$ all $a_l(z)$ vanish and we obtain $\widehat{\Lambda^{\! l} \tb}(0)= \Lambda^{\! l} \tb$.

It is interesting to specialize this theorem to the case of the top exterior power: $\det(\tb)= \O(1)$.
It is well-known that the operator of multiplication by the line bundle
${\mathcal{O}}(1)$ in  the  classical equivariant $K$-theory of $\mathbf{N}_{k,n}$ corresponds to the lattice part of quantum affine Weyl group of $\fsh$, see e.g. \cite{os}.
In particular, it acts on Drinfeld's generators by:
$
{\O}(1) F_{m} {\O}(1)^{-1} =F_{m-1}, \ \  {\O}(1) E_{m} {\O}(1)^{-1} =E_{m+1}.
$
Using these equations and Theorem \ref{Bform} one obtains:
$$
\widehat{\mathcal{O}(1)}(z) \circledast \cdot= B(z)\, {\mathcal{O}}(1), \ \ \ B(z)=\sum\limits_{m=0}^{\infty} \dfrac{\hbar^{m(m+1)/2} (\hbar-1)^m K^m }{(m)_{\h}!\prod\limits_{i=1}^{m}(1-(-1)^nz^{-1} K \hbar^i)} F_{0}^{m} E_{0}^{m},
$$
where $\widehat{\mathcal{O}(1)}(z)$ is a quantum tautological bundle corresponding to the top exterior power tautological bundle, i.e. line bundle ${\O}(1)$ on $\mathbf{N}_{k,n}$. 
One should not worry with the infinite number of terms in the expression for $B(z)$, since we are dealing with the finite-dimensional representations of  $\fsh$ and thus $E_0, F_0$ act as nilpotent operators.

Up to a shift in the notations $B(z)$ coincides with the
element of the universal enveloping algebra, defining the action of the quantum affine dynamical Weyl group $QW_{\fsh}$ (see Proposition 14 in \cite{ev}).
The dynamical parameter $e^{\lambda}$ of the dynamical Weyl group in \cite{ev} is identified with the parameter of quantum deformation $z$
in quantum $K$-theory. We conclude with the following result:
\begin{Theorem}
\label{dymath}
The lattice element of the quantum dynamical Weyl group $QW_{\fsh}$ acts on $K_{\bT}(\N(n))$ as the operator of quantum multiplication
by the quantum line bundle $\widehat{\mathcal{O}(1)}(z)$.
\end{Theorem}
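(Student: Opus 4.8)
The plan is to deduce this statement directly from Theorem \ref{Bform} specialized to the top exterior power $l=k$, together with the description of the action of the classical line bundle $\mathcal{O}(1)$ on Drinfeld generators. First I would recall that $\det(\tb) = \Lambda^{\!k}\tb = \mathcal{O}(1)$ on $\N_{k,n}$, so that setting $l=k$ in Theorem \ref{Bform} and using $\Lambda^{\!k-m}\tb = \mathcal{O}(1)\cdot\Lambda^{\!-m}(\ldots)$ — more precisely, factoring out the determinant line — rewrites each term $F_0^m\,\Lambda^{\!k-m}\tb\,E_{-1}^m$ in the form $F_0^m\,E_{-1}^m$ composed with $\mathcal{O}(1)$ up to the appropriate power of $\hbar$ and $K$ coming from commuting $\Lambda$-classes past the $E$'s and $F$'s. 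Carrying this out carefully gives $\widehat{\mathcal{O}(1)}(z) = B(z)\,\mathcal{O}(1)$ with $B(z)$ the series displayed just before the theorem statement. The key computational input here is the commutation of the line bundle $\mathcal{O}(1)$ with Drinfeld's generators, namely $\mathcal{O}(1)F_m\mathcal{O}(1)^{-1} = F_{m-1}$ and $\mathcal{O}(1)E_m\mathcal{O}(1)^{-1} = E_{m+1}$, which lets me convert the $E_{-1}$'s appearing in Theorem \ref{Bform} into the $E_0$'s appearing in $B(z)$ after pulling $\mathcal{O}(1)$ to the right.

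Next I would identify $B(z)$ with the universal element governing the lattice part of the quantum affine dynamical Weyl group $QW_{\fsh}$ as constructed by Etingof and Varchenko. Concretely, I would cite Proposition 14 of \cite{ev}, which gives an explicit formula for the dynamical Weyl group element associated with the translation lattice element of the affine Weyl group of $\widehat{\mathfrak{sl}}_2$, expressed as a series in $F_0^m E_0^m$ with coefficients that are rational functions of $K$ and the dynamical parameter $e^{\lambda}$. Matching the two formulas term by term requires only the dictionary $e^{\lambda} \leftrightarrow (-1)^n z^{-1}$ (up to the harmless overall shift in normalization the text already flags), together with a check that the $\hbar$-factorials and powers of $\hbar$ agree; this is a direct comparison of closed-form coefficients, not a genuinely new computation.

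Finally, to conclude the theorem I would invoke the identification $K_{\bT}(\N(n)) = \mathcal{H}_{XXZ}$ as $\fsh$-modules from \cite{vasserot}: since both the operator of quantum multiplication by $\widehat{\mathcal{O}(1)}(z)$ and the dynamical Weyl group lattice element are built from the \emph{same} universal element of (a completion of) $\fsh[[z]]$, they must act identically in every $\fsh$-module, in particular in $K_{\bT}(\N(n))$. The main obstacle I anticipate is not conceptual but bookkeeping: keeping precise track of the powers of $\hbar$ and of $K$ that are generated when one commutes the tautological $\Lambda$-classes past $E_{-1}$ and $F_0$, and matching normalization conventions (grading shifts, choice of $e^{\lambda}$ versus $z$, the sign $(-1)^n$) between the quantum $K$-theory side and the conventions of \cite{ev}. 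Once those conventions are pinned down, the argument is a short deduction from Theorem \ref{Bform}.
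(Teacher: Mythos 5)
Your proposal follows the paper's own argument essentially verbatim: specialize Theorem \ref{Bform} to the top exterior power, use $\O(1)F_m\O(1)^{-1}=F_{m-1}$, $\O(1)E_m\O(1)^{-1}=E_{m+1}$ to pull out $\det\tb=\O(1)$ and obtain $\widehat{\O(1)}(z)=B(z)\,\O(1)$, then match $B(z)$ with the lattice element of Proposition 14 of \cite{ev} under the identification of $e^{\lambda}$ with the quantum parameter. The only difference is your slightly more explicit dictionary for $e^{\lambda}$ versus $z$, which the paper itself leaves as "up to a shift in the notations," so this is the same proof.
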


\subsection{Correspondence table}

In short, the physics of the  $XXZ$ spin chain, quantum $K$-theory of $\N(n)$ and representation theory
of $\fsh$ are different languages describing the same object. The following table is a dictionary:

\begin{small}
$$
\begin{array}{|c|c|c|}
\hline
&&\\
XXZ-\textrm{spin} \ \ \textrm{chain} & \textrm{Geometry }\ \  \textrm{of} \ \  \N_{k,n} &  \textrm{Representation} \ \  \\
&&\textrm{theory} \ \ \textrm{of} \ \ \fsh\\
\hline
&& \\
{\mathcal{H}}_{XXZ}&  K_{\bT}(\N(n)) &  \bigotimes^n_{i=1}\C^{2}(a_i) \\ &&
\\ \hline &&
\\
\textrm{inhomogeneity} & \textrm{equivariant} & \textrm{evaluation}  \ \ \textrm{module}
\\
\textrm{parameters} \ \ a_i& \textrm{characters}  \ \ a_i & \textrm{parameters} \ \ a_i\\
&&\\
\hline && \\
\textrm{anisotropy \ \ parameter }\ \  & \hbar= {\bT}   & \hbar^{1/2} - \textrm{parameter}   \\
\Delta = \hbar^{1/2} +\hbar^{-1/2} &  \textrm{weight \ \ of \ \ symplectic \ \ form}& \textrm{of \ \ the \ \ quantum \ \ group} \\
&& \\
\hline
&&\\
\textrm{Transfer \ \ matrices,} & \textrm{generating \ \ function \ \ for} & \textrm{ weighted \ \ partial \ \ traces}   \\
\textrm{Baxter} \ \ \mathcal{Q}-\textrm{operators}  & \ \  \textrm{quantum \ \  tautological \ \ bundles} & \textrm{of} \ \ \textrm{of} \ \ R-\textrm{matrices} \\
&&
\\
\hline &&\\
z-\mathrm{parameter \ \ of}&z- \mathrm{parameter \ \ of} &z - \textrm{ parameter of } \\
\mathrm{\ \ boundary \ \ condition}  &\mathrm{  \ \  quantum \ \ deformation} &\textrm {weight \ \ in \ \ the \ \ trace } \\
&& \\
\hline
\end{array}
$$
 \end{small}
\vspace*{3mm}

\noindent
\textbf{Acknowledgements.}
First of all we would like to thank Andrei Okounkov for many valuable discussions, ispiration and teaching us his understanding of geometric representation theory.
We are also grateful to M. Aganagic, D. Galakhov, M. McBreen, A. Negut, V. Toledano-Laredo and N.Yu. Reshetikhin for their interest and comments.
The work of A.V. Smirnov was supported in part by RFBR grants 15-31-20484
mol-a-ved and RFBR 15-02-04175. The work of A.M. Zeitlin was partially supported by AMS Simons travel grant and Simons Collaboration Grant, Award ID: 578501. Part of this work was done during Simons Summer Workshop 2016.

\section{Quasimaps to $\N_{k,n}$, vertex functions and diagrammatic notation \label{qms}}

This section is a pedestrian exposition of the ideas developed  in \cite{ionut,lectok}.  Most of the results of this section are given without a proof and the reference to the corresponding theorem in \cite{ionut} or \cite{lectok} is used instead.    

The material presented is primarily aimed towards the audience of quantum 
integrable systems theorists wishing to familiarize themselves with the basic 
notions of enumerative geometry.

\subsection{Overview of enumerative geometry}

Let  $\mathcal{C}$  be a complex curve. We denote a ``space of maps'' from $\mathcal{C}$ to a variety $X$ (for us, this variety will be the cotangent bundle of the Grassmannian $X=\N_{k,n}$) as follows:
\be \label{msp}
\mathcal{M}^d=\{ \textrm{maps of degree $d$}: \ \ \mathcal{C}\rightarrow X \}, \ \ \ d\in H_{2}(X,\Z).
\ee
The degree of the map $f$ is defined as $[Im(f)]=d\in H_{2}(X,\Z)$. 
In general, it is not easy to define $\mathcal{M}^d$, and several versions 
of such  moduli spaces exist. Let us postpone these questions for a moment,
and assume we are given some well-defined moduli space~(\ref{msp}).

For a point $p$ on a curve $\mathcal{C}$ there exists an evaluation map:
$$
\textrm{ev}_p : \mathcal{M}^d \rightarrow X,
$$
which sends a map $f\in \mathcal{M}^d$ to its value at the point $p$:
$$
\textrm{ev}_p: f\mapsto f(p) \in X.
$$ 
More generally, for a collection of points $p_1,\dots,p_m\in \mathcal{C}$
we have a set of evaluation maps:
\be \label{evmaps}
\textrm{ev}_{p_1}\times \dots \times \textrm{ev}_{p_m}  :\, \mathcal{M}^d\rightarrow X\times\dots\times X.
\ee
In enumerative geometry one studies the maps (\ref{evmaps}) on the level of cohomology or K-theory. 
For example, (\ref{evmaps}) induces the maps of the corresponding K-theories:
\be \label{eqmap}
\textrm{ev}_{p_1 *}\times \dots \times \textrm{ev}_{p_m *} :\, K(\mathcal{M}^d)\rightarrow K(X)\otimes \dots \otimes K(X).
\ee  
For the pushforward maps $\textrm{ev}_{p_i *}$ to be well-defined, the moduli space $\mathcal{M}^d$ has to be compact in the appropriate sense. The operation of pushforward is the analog of integration in cohomology, and the problem is that we may not always integrate differential forms over non-compact spaces as those integrals could be divergent. In practice, the $K$-theoretic pushforwards (\ref{eqmap}) are well-defined if the maps
$\textrm{ev}_{p_i}$ are \textit{proper}. But again, for a moment let us assume that $\textrm{ev}_{p_i}$ are proper and all the maps $\textrm{ev}_{p_i *}$ exist.

For simplicity, let us denote $K(X)\equiv L$ and assume that $L$ is a finite-dimensional vector space. Then, for every natural class $\tau \in K(\mathcal{M}^d)$ such as, e.g. the structure sheaf of $\mathcal{M}^d$, one obtains a rank  $m$ tensor:
$$
\textrm{ev}_{p_1 *}\times \dots \times \textrm{ev}_{p_m *}(\tau) \in L^{\otimes m}.
$$
To keep track of the degree $d$ we will organize them into a power series
\be \label{tensors}
V_{p_1,\dots,p_m}(z) = \sum\limits_{d} z^d \textrm{ev}_{p_1 *}\times \dots \times \textrm{ev}_{p_m *}(\tau) \in L^{\otimes m}[[z]].
\ee
Here $z$ stands for a collection $\{z_i\}$ of the formal parameters of the generating functions, known as K\"ahler parameters, and we assume the notation $z^d\equiv\prod_i z_i^{d_i}$, so that $d_i$ are the components of $d \in H_{2}(X,\Z)$. 

By definition, 
$V_{p_1,\dots,p_m}(z)$ are the $m$-tensors whose components are power series in $z$. The study of such tensors is the main subject of enumerative geometry. 
When $X$ and $\mathcal{M}^d$ are equipped with an action of a torus $\bT$
one can study the $\bT$-equivariant version of the map (\ref{eqmap}). In this case
the coefficients of power series $V_{p_1,\dots,p_m}(z)$ become some interesting rational functions of coordinates on $\bT$, which we refer to as {\it equivariant parameters}. We will see some explicit examples of such tensors below.

As we discuss in the Section \ref{gsec}, in the case $X=\N_{k,n}$ the corresponding vector space $L$ is the $n!/(k!(n-k)!)$-dimensional subspace of the Hilbert space of XXZ spin chain spanned by configurations with exactly $k$-spins up:
$$
L=\textrm{weight $k$ subspace of} \ \ \ \C^2(a_1)\otimes \dots \otimes \C^2(a_n).
$$ 
In this sense, the enumerative geometry of $\N_{k,n}$ provides a natural set of tensors for these subspaces of the XXZ spin models. The goal of this paper is to explain that {\bf
	most important objects in the theory of XXZ spin chains appear in this way}. As we shall see in the following,  the examples of such ``tensors'' include Baxter operators (more generally, all Hamiltonians of the XXZ model), solutions of quantum Knizhnik-Zamolodchikov equations and other standard objects in the theory of quantum integrable systems. 

The appropriate version of the moduli space $\mathcal{M}$, which makes contact to representation theory, is the moduli space of quasimaps introduced in \cite{lectok}. In the next sections we collect the known results about these moduli spaces and corresponding tensors (\ref{tensors}).


\subsection{Quasimaps}

Let us fix a rational curve $\mathcal{D}\cong \mathbb{P}^1$ and a set of distinct points $p_1,\dots,p_m \in \mathcal{D}$. The following is the genus zero version of Definition 7.2.1 in \cite{ionut}  for the target $\N_{k,n}$. 

\begin{Definition} \label{qmdef}
	A stable, genus zero, quasimap to $\N_{n,k}$ relative to $p_1,\cdots,p_m$  is given by the following data
	$$
	(\mathcal{C},p_1',\dots,p_m',P,f, \pi),
	$$	
	where
	\begin{itemize}
		\item $\mathcal{C}$ is a connected, at most nodal genus zero projective curve and $p_i'$ are nonsingular points of $\mathcal{C}$,
		\item $P$ is a principal $GL(k)$ - bundle over $\mathcal{C}$,
		
		\item f is a section of the fiber bundle 
		\be \label{qm}
		{\rm p} : P\times_{{GL(k)}} (R \oplus R^{*})  \rightarrow \mathcal{C}
		\ee
		over $\mathcal{C}$ satisfying $\mu =0$, where $R=Hom(V,W)$ - is a representation of $GL(k)$
		defined in Section \ref{definit},   
		\item $\pi: \mathcal{C} \rightarrow \mathcal{D}$ is a regular map,
		\end{itemize}
		satisfying the following conditions:
		\begin{enumerate}
		\item There is a distinguished component $\mathcal{C}_0$ of $\mathcal{C}$ such that 
		$\pi$ 	restricts to an isomorphism $\pi:\mathcal{C}_0 \cong \mathcal{D} $ and $\pi(\mathcal{C}\setminus \mathcal{C}_{0})$ is zero-dimensional (possibly empty). 
		
		\item $\pi(p_i')=p_i$.
		
		\item $f(p)$ is stable in the sense of (\ref{stpoints}) for all $p\in \mathcal{C}\setminus B$ where $B$ is a finite (possibly empty) subset of $\mathcal{C}$. 
		
		\item The set $B$ is disjoint from the nodes and points $p'_1,\dots, p'_m$. 
		
		\item $\omega_{\tilde{\mathcal{C}}}( \sum_i p'_i + \sum_j q_i) \otimes {\mathcal{L}}_{\theta}^{\epsilon}$ is ample for every rational $\epsilon >0$, where ${\mathcal{L}}_{\theta}=P\times_{{GL(k)}} \C_{\theta}$ ($\theta=\det$ is the character of ${GL(k)}$),
		$\tilde{\mathcal{C}}$ is the closure of $\mathcal{C}\setminus \mathcal{C}_0$ and $q_i$ are the nodes  $\mathcal{C}_0\cap \tilde{\mathcal{C}}$. 
		\end{enumerate}
	
\end{Definition}
{ \rd
	We say that a curve $C$ is a {\it chain of rational curves} if it is a union of components
	$$
	C=C_1\cup \dots \cup C_{l} 
	$$
	so that $C_{i}\cong\mathbb{P}^{1}$ and the intersection of neighboring components consist of one point $C_{i}\cap C_{i+1}=pt$ where $pt$ is a nodal singularity of $C$. Other components do not have common points.

	The conditions (1)-(5) of Definition \ref{qmdef} mean that the curve $\mathcal{C}$ is  a union of the distinguished component $\mathcal{C}_{0}$ and chains of rational curves attached to $\mathcal{C}_{0}$ at the points $p_{i} \in \mathcal{C}_{0}$ via a nodal singularity. The projection $\pi$ collapses the chain of rational curves attached to a point $p_i$ to the same point $p_{i}$. Away from the points $p_{i}$ on $\mathcal{D}$ the projection $\pi$ is an isomorphism:
	$$
	\pi^{-1}(\mathcal{D}\setminus \{p_1,\dots,p_m\}) \cong \mathcal{D}\setminus \{p_1,\dots,p_m\}
	$$
	 In particular, if $r\in \mathcal{D}$ is not a  relative point, i.e.,  $r\not \in \{p_1,\dots,p_m\}$ then $\pi^{-1}(r)$ consist of a single point which we  identify with $r$.  
	
	Finally,  condition (5) implies also that the total number of special points on each component of $\mathcal{C}$ (including the nodes) is at least two.  
	This means that the point $p_i'$ is located on the last component of the chain of rational curves attached to $p_i$, see Fig.~\ref{rtch}. 
	
	\begin{figure}[h!]
		\centering
		\includegraphics[width=8.5cm]{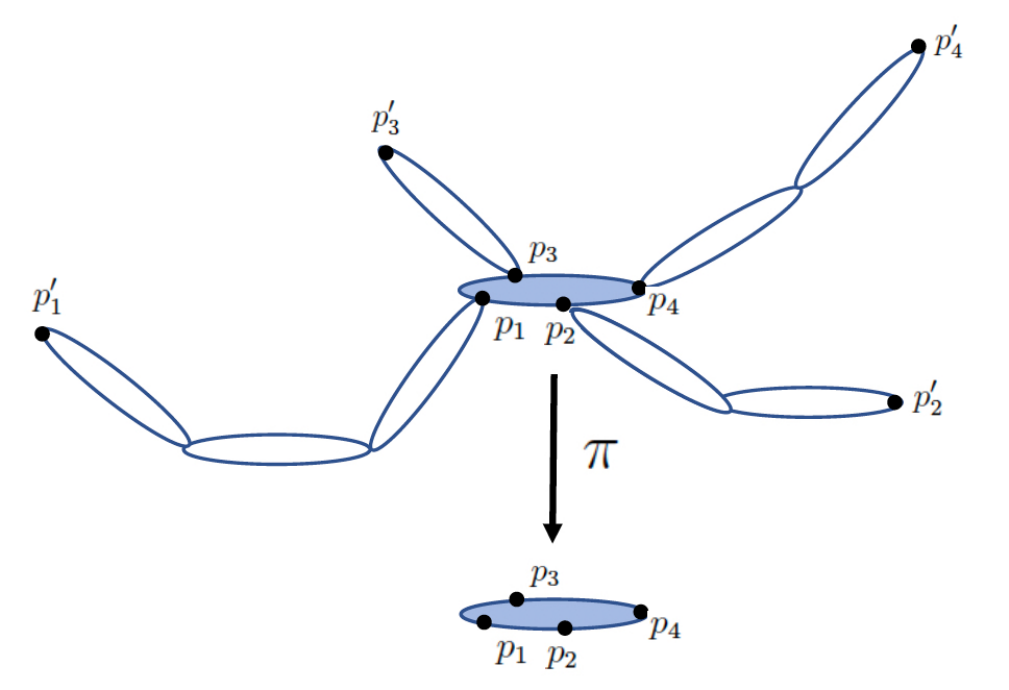}
		\caption{\label{rtch} A  example of parametrizes component with 4 relative points $p_1,p_2,p_3,p_4$ and chains of rational curves attached to it.}
	\end{figure}
	
}

Given a quasimap, we will often refer to $\mathcal{C}_0$ (or $\mathcal{D}$) as \textit{parametrized component} and to points $p_1,\dots,p_m$ as {\it relative} points.  It is possible to upgrade this definition to more general setting replacing $\mathcal{D}$ with an arbitrary curve or even a single point.

Let $(\mathcal{C},p_1',\dots,p_m',P,f, \pi)$ be a quasimap and let
$V$ be the $k$-dimensional representation of 
${GL(k)}$ as in Section \ref{definit}. Let us denote by 
\be \label{qmvdef}
\mathscr{V}=P\times_{{GL(k)}} V \rightarrow \mathcal{C}
\ee the associated rank $k$ vector bundle over $\mathcal{C}$. 

\begin{Definition} \label{defdegree}
	The degree of a quasimap $(\mathcal{C},p_1',\dots,p_m',P,f, \pi)$ is the degree the vector bundle 	$\mathscr{V}$ associated to it. 
\end{Definition}

\begin{Definition}
	Let $\qm^{d}_{{\rm{relative}}, p_1,\cdots,p_m}$ denote the stack parameterizing stable genus zero quasimaps relative to $p_1,\dots,p_m$, (i.e. the data of Definition \ref{qmdef}) of fixed degree $d$.   	
\end{Definition}

The following result is proved in \cite{ionut} (Theorem 7.2.2):
\begin{Theorem} 
	The stack $\qm^{d}_{{\rm{relative}}, p_1,\cdots,p_m}$ is a  Deligne-Mumford stack of finite type with  perfect obstruction theory. The stack $\qm^{d}_{{\rm{relative}}, p_1,\cdots,p_m}$ is proper over affine scheme $\N^0_{k,n}$ (defined in (\ref{affineN})). 	
\end{Theorem}

Let $(\mathcal{C},p_1',\dots,p_m',P,f, \pi)$ be a quasimap. By definition, $f(p_i')$ is a stable element of $R\oplus R^{*}$ for all $i=1,\dots,m$. In  particular, the corresponding ${GL(k)}$-orbit $[f(p_i')]$ is a well-defined point in 
$\N_{k,n}$.

\begin{Definition} \label{evmapsdef}
	The evaluation maps ${\rm{ev}}_{p_i}: \qm^{d}_{{\rm{relative}}, p_1,\cdots,p_m}\rightarrow \N_{k,n}$, $i=1,\dots,m$ are defined by 
	\be \label{proevs}
	{\rm{ev}}_{p_i}: (\mathcal{C},p_1',\dots,p_m',P,f, \pi)\mapsto [f(p_i')],
	\ee
	where $[f(p_i')]$ denotes  a point in $\N_{k,n}$ representing the class of ${GL(k)}$-orbit of $f(p_i')$. 
\end{Definition}

The properness of $\qm^{d}_{{\rm{relative}}, p_1,\cdots,p_m}$ over  $\N^0_{k,n}$ 
provides the following important result.
\begin{Corollary}
	The evaluation maps ${\rm{ev}}_{p_i}$, $i=1,\dots,m$  are proper. 	
\end{Corollary}
\begin{proof}
	Let us consider the following diagram: 
	\begin{center}
		\vspace{5mm}	
		\begin{tikzpicture}[node distance =5.1em]
		\node (rel) at (2.5,1.5) {$\N_{k,n}$};
		\node (nonsing) at (0,0) {$\qm^{d}_{{\rm{relative}}, p_1,\cdots,p_m}$};
		\node (X) at (5,0) {$\N^{0}_{k,n}$};
		\draw [->]  (nonsing)  edge  (X);
		\draw [->] (rel) edge  (X);
		\draw [->] (nonsing) edge node[above]{$\text{ev}_{p_i}$} (rel);
		\end{tikzpicture}
	\end{center}
	By the previous theorem $\qm^{d}_{{\rm{relative}}, p_1,\cdots,p_m}$ is proper over 
	$\N^{0}_{k,n}$ and thus $\text{ev}_{p_i}$ is also proper. 
\end{proof}

\begin{Corollary} \label{wellpush}
There exist	pushforward maps ${\rm ev}_{p_i,*}: K_{\bT}( \qm^{d}_{{\rm{relative}}, p_1,\dots,p_m} ) \rightarrow K_{\bT}(\N_{k,n})$ for $i=1,\dots,m$. 	
\end{Corollary}

\subsection{Nonsingular quasimaps\label{nonssec}}
For distinct points $r_1,\dots,r_s,p_1,\dots, p_m\in \mathcal{D}$ we consider an open subset of $\qm^{d}_{ {{{\rm{relative}},  p_1,\dots,p_m}}}$:
$$
{\qm}^{d}_{{{\rm{nonsing}}, r_1,\cdots,r_s } \atop {{{\rm{relative}},  p_1,\dots,p_m}}}\subset \qm^{d}_{ {{{\rm{relative}},  p_1,\dots,p_m}}},
$$
consisting of quasimaps such that  $f(r_i)$ are stable for all $i=1,\dots, {\rd s}$\footnote{\rd Note that $r_i$ are separate from the relative points $p_i$ 
and thus we may identify $r_i$ with points $\pi^{-1}(r_i)\in \mathcal{C}$,
see discussion after Definition 1 above. }.  Restricting the obstruction theory of $\qm^{d}$ to this open subset we obtain the following corollary.

\begin{Corollary}
	The stack ${\qm}^{d}_{{{\rm{nonsing}}, r_1,\cdots,r_s } \atop {{{\rm{relative}},  p_1,\dots,p_m}}}$ is a  Deligne-Mumford stack of finite type, with  perfect obstruction theory.	
\end{Corollary}
This moduli space is equipped with well-defined evaluation maps
$$
{\rm{ev}}_{p_i}: {\qm}^{d}_{{{\rm{nonsing}}, r_1,\cdots,r_s } \atop {{{\rm{relative}},  p_1,\dots,p_m}}}\rightarrow \N_{k,n}, \ \ \ 
{\rm{ev}}_{r_j}: {\qm}^{d}_{{{\rm{nonsing}}, r_1,\cdots,r_s } \atop {{{\rm{relative}},  p_1,\dots,p_m}}}\rightarrow \N_{k,n}
\ \ \ 
$$
for $i=1,\dots,m, j=1,\dots,s$.
Note, that ${\qm}^{d}_{{{\rm{nonsing}}, r_1,\cdots,r_s } \atop {{{\rm{relative}},  p_1,\dots,p_m}}}$ is not necessarily proper over $\N^{0}_{k,n}$. Thus, in general, these evaluation maps  do not provide push-forward maps in K-theory.

In special cases, however, it is possible to define ${\rm{ev}}_{p_i,*}$ and ${\rm{ev}}_{r_i,*}$  for localized K-theory. Let us denote by $\C^{\times}_q$  a one-dimensional torus acting  on $\mathcal{D}=\mathbb{P}^1$ in the following way. 
If $[x:y]$ denotes homogeneous coordinates on $\mathcal{D}$ then the corresponding action of $\C^{\times}_q$ is defined by:
$$
[x:y] \rightarrow [x q :y] =[x :y q^{-1}].
$$ 
Let us note that the set  $\mathcal{D}^{\C^{\times}_q}$ consists of two isolated fixed points:
\be \label{points}
p_1=[1,0] =0 \in \mathbb{P}^{1},  \ \ p_2=[0,1]=\infty \in \mathbb{P}^{1}.
\ee
This means that $\C^{\times}_{q}$-fixed quasimap may only have special marked points on $\mathcal{D}$  (i.e. nonsingular or relative points)  at $p_1$ or $p_2$. The following result is proved in Section 7.2 of \cite{lectok}:
\begin{Theorem}
	For $p_1,p_2$ as in (\ref{points}) the evaluation maps 
	$$
\mathrm{ev}_{p_1}: (\qm^{d}_{\mathrm{nonsing}, p_1 } )^{\C^{\times}_q} \rightarrow \N_{k,n}, 
	$$
		$$
	\mathrm{ev}_{p_1}: (\qm^{d}_{{\mathrm{nonsing}, p_1} \atop {\mathrm{relative}, p_2} } )^{\C^{\times}_q} \rightarrow \N_{k,n}, 
	$$
	are proper. 
\end{Theorem}
We denote by $\bG=\bT\times \C^{\times}_{q}$ the combined torus acting on these moduli spaces.
\begin{Corollary} \label{nonscorr}
There exist pushforward maps 
$$
\mathrm{ev}_{p_1,*}: K_{\bG}(\qm^{d}_{\mathrm{nonsing}, p_1 } ) \rightarrow K_{\bG}(\N_{k,n})_{loc}
$$	
$$
\mathrm{ev}_{p_1,*} \otimes \mathrm{ev}_{p_2,*}: K_{\bG}(\qm^{d}_{{\mathrm{nonsing}, p_1} \atop {\mathrm{relative}, p_2} } ) \rightarrow K_{\bG}(\N_{k,n})^{\otimes 2}_{loc}
$$ 
\end{Corollary}


\subsection{Capped K-theoretic tensors} 
Let us consider the following map (cf. Corollary \ref{wellpush}):
$$
\textrm{ev}_{p_1 *} \otimes \dots\otimes \textrm{ev}_{p_m *}: K_{\bT}\Big( {{\qm}^d}_{\text{relative }\, p_1,\dots,p_m} \Big)
\rightarrow K_{\bT}(\N_{k,n} )^{\otimes m}.
$$
Let ${\vss}^d\in K_{\bT}\Big( {{\qm}^d}_{\text{relative }\, p_1,\dots,p_m} \Big)$ denote the virtual structure sheaf of the moduli space provided by the perfect obstruction theory of ${{\qm}^d}_{\text{relative }\, p_1,\dots,p_m}$. 

\begin{Definition}
The  power series with coefficients in $K$-theory of $\N_{k,n}$	
\be \label{cten}
\hat{V}_{p_1,\dots,p_m}(z):=\sum\limits_{d=0}^{\infty}\, z^d\,  \mathrm{ev}_{p_1 *} \otimes \dots\otimes \mathrm{ev}_{p_m *}\Big({\vss}^d\Big)\in  K_{\bT}(\N_{k,n} )^{\otimes m}[[z]].
\ee
are called \rm{capped $K$-theoretic tensors}. 
\end{Definition}

Let $p \in \mathcal{D}$ be a point disjoint from $p_1,\dots,p_m$ and let $\tau$ be a Schur functor 
(with coefficients in $K_{\bT}(pt)$ as in Section \ref{definit}). The value $f(p)$ for such point is not necessary stable in the sense of (\ref{stpoints}), therefore it does not provide an evaluation map to $\N_{k,n}$, but only to the quotient stack:
$$
\textrm{ev}_{p}: {{\qm}^d}_{\text{relative }\, p_1,\dots,p_m}\rightarrow [\mu^{-1}(0)/GL(k)].
$$  
Let $\tb_{stack}=(V \times T^*R)/GL(k)$ be the element in $K$-theory class of this stack associated to $k$-dimensional $GL(k)$-module $V$. We define 
$$
\tau(\left.\qmV\right|_{p}):=\textrm{ev}_{p}^{*}( \tau(\tb_{stack}) ). 
$$

\begin{Definition}
The power series 	
\be\label{phfd}
\hat{V}^{(\tau)}_{p_1,\dots,p_m}(z):= \sum\limits_{d=0}^{\infty}\, z^d\,  \mathrm{ev}_{*,p_1} \otimes \dots\otimes \mathrm{ev}_{*,p_m}\Big({\vss^d \otimes \tau(\left.\qmV\right|_{p})}\Big) 
\ee
is called \textit{capped K-theoretic tensor with descendent $\tau$ inserted at $p$} (the notation "at $p$", will often be omitted, since we almost always use the point $p=0$).   
\end{Definition}

\subsection{Degeneration and Gluing\label{glf}}


Let  $\mathcal{D}$ be the parametrized component of a quasimap and $p_1,\dots,p_m \in \mathcal{D}$ be relative points. As we discussed above, this setting produces a tensor of rank $m$  by the pushforwards (\ref{phfd}). In the basis of fixed points of $K_{\bT}(\N_{k,n})$ we can describe such tensor explicitly by its components $T_{i_1,...,i_m} (\mathcal{D})$.

Let us  assume that $\mathcal{D}$ degenerates to a union of two rational curves $\mathcal{D}=\mathcal{D}_1\cup_p \mathcal{D}_2$ with one common nodal point $p$.  The \textit{degeneration formula} in the theory of quasimaps states that the tensors for quasimaps with domain $\mathcal{D}$ and its degeneration $\mathcal{D}_1\cup_p \mathcal{D}_2$ are equal:
$$
T_{i_1,...,i_m} (\mathcal{D})=T_{i_1,...,i_m} (\mathcal{D}_1\cup_p \mathcal{D}_2).
$$ 
This identity can be represented diagrammatically as 
\vspace{-3mm}
\begin{center}
	\begin{align} 
	\begin{tikzpicture}
	\draw [ultra thick] (-3,0) -- (1.4,0);
	\node at (1.7,0) {$=$};
	\draw [ultra thick] (4,0) to [out=0,in=240] (4.7,0.4);
	\draw [ultra thick] (5,0) to [out=180,in=300] (4.3,0.4);
	\draw [ultra thick] (5,0) -- (7,0);
	\draw [ultra thick] (2,0) -- (4,0);
	\node at (-2,-0.3) {$p_1$};
	\node at (-0.9,-0.3) {$\dots$};
	\node at (0.2,-0.3) {$p_m$};
	\node at (2.4,-0.3) {$p_1$};
	\node at (3,-0.3) {$\dots$};
	\node at (3.6,-0.3) {$p_s$};
	\node at (5.2,-0.3) {$p_{s+1}$};
	\node at (6,-0.3) {$\dots$};
	\node at (6.6,-0.3) {$p_{m}$};
	\node at (4.55,0.45) {$p$};
	\end{tikzpicture}
	\end{align}
\end{center}

Next, the statement of the \textit{gluing formula} is that the  tensors (\ref{phfd}) given by quasimaps with parametrized component  $\mathcal{D}_1\cup_p \mathcal{D}_2$ can be factored into a product of tensors given by quasimaps with parametrized components $\mathcal{D}_1$ and $\mathcal{D}_2$.   More precisely, assume that after degeneration $p_1,.\cdots,p_s \in \mathcal{D}_1$  and the rest of the points lie within the second component, i.e. $p_{s+1},\dots, p_m \in \mathcal{D}_2$. Let us consider quasimaps with parametrized domain $\mathcal{D}_1$ and relative points $p_1,\dots,p_s,p$. This gives a tensor ${T}_{i_1,\dots,i_s,a}(\mathcal{D}_1)$  with $s+1$ indices, so that the index $a$ corresponds to the new point $p$. Similarly, for the second component
$\mathcal{D}_2$ we have a tensor 
${T}_{i_{s+1},\dots,i_{m},b}(\mathcal{D}_2)$ with $m-s+1$ indices, were $b$ is the index corresponding to the relative point $p$. The main statement of the gluing theorem is the existence of the \textit{gluing matrix} ${\bold{G}} \in  K^{\otimes 2}_{\bT}(\N_{k,n})[[z]]$ such that
\be \label{gluingth}
{T}_{i_1,\dots,i_m}(\mathcal{D}_1\cup_p \mathcal{D}_2)=\sum\limits_{a,b}\, {(\bold{G^{-1})}^{{a,b}}} {T}_{i_1,\dots,i_s,a}(\mathcal{D}_1)\, {T}_{i_{s+1},\dots,i_{m},b}(\mathcal{D}_2).
\ee
As a diagram this formula can be represented in the form:
\vspace{-3mm}
\begin{center}
	\be \label{glform}
	\begin{tikzpicture}
	\draw [ultra thick] (4,0) to [out=0,in=240] (4.7,0.4);
	\draw [ultra thick] (5,0) to [out=180,in=300] (4.3,0.4);
	\draw [ultra thick] (5,0) -- (7,0);
	\draw [ultra thick] (2,0) -- (4,0);
	\node at (2.4,-0.3) {$p_1$};
	\node at (3,-0.3) {$\dots$};
	\node at (3.6,-0.3) {$p_s$};
	\node at (5.2,-0.3) {$p_{s+1}$};
	\node at (6,-0.3) {$\dots$};
	\node at (6.6,-0.3) {$p_{m}$};
	\node at (4.55,0.45) {$p$};
	\node at (8.3,0) {$=$};
	\draw [ultra thick] (3+6.6,0) -- (4+7.6,0);
	\draw [ultra thick] (4+7.5,0.2) to [out=330,in=30] (4+7.5,-0.2);
	\node at (4+8.05,0) {${\bold{G}}^{-1}$};
	\draw [ultra thick] (4+8.5,0) -- (5+9.5,0);
	\draw [ultra thick] (4+3.1+5.5,0.2) to [out=210,in=150] (4+3.1+5.5,-0.2);
	\node at (10,-0.3) {$p_1$}; 
	\node at (10.6,-0.3) {$\dots$};
	\node at (11.2,-0.3) {$p_s$}; 
	\node at (13.1,-0.3) {$p_{s+1}$}; 
	\node at (13.8,-0.3) {$\dots$};
	\node at (14.4,-0.3) {$p_m$};
	\end{tikzpicture}
	\ee
\end{center}
The left part of this pictorial identity denotes the capped tensor  (\ref{phfd}) given by  quasimaps from with parametrized component given by $\mathcal{D}_1\cup_p \mathcal{D}_2$. The right side denotes  two capped tensors for parametrized components $\mathcal{D}_1$ and $\mathcal{D}_2$ 
contracted by the matrix ${\bold{G}}^{-1}$.  For the proof of the degeneration and gluing formulas we refer the reader to the Section 6.5 of \cite{lectok}.

\subsection{More about diagrammatic notation} 
Similar to the example with the degeneration and gluing formulas in the previous section, it is often  convenient to use diagrammatic abbreviation for tensors (\ref{phfd}) given by quasimaps  with different parametrized domains.  

We will use the following  {\it pictures} representing the parametrized domains:
\vspace{0.2in}

\hspace{45pt}\begin{tikzpicture}
\draw [ultra thick] (0,0) -- (2,0);
\end{tikzpicture}  \hspace{10pt}denotes parametrized domain  ${\mathbb{P}}^1$ of  quasimaps,

\vspace{0.2in}
\hspace{45pt}\begin{tikzpicture}
\draw [ultra thick] (0,0) -- (1,0);
\draw [ultra thick] (1,0) -- (2,0);
\draw [fill] (1,0) circle [radius=0.1];
\end{tikzpicture}\hspace{10pt} denotes a marked point on parametrized domain \\
\hspace*{45mm}(no conditions on quasimaps),

\vspace{0.2in}
\hspace{45pt}\begin{tikzpicture}
\draw [ultra thick] (0,0) -- (1,0);
\draw [ultra thick] (0,0) -- (2,0);
\draw [ultra thick] (0.9,0.2) to [out=330,in=30] (0.9,-0.2);
\end{tikzpicture}\hspace{10pt} denotes a relative point on the parametrized domain,

\vspace{0.2in}
\hspace{45pt}\begin{tikzpicture}
\draw [ultra thick] (0,0) -- (0.87,0);
\draw [ultra thick] (1.1,0) -- (2,0);
\draw [very thick] (1,0) circle [radius=0.13];
\end{tikzpicture}\hspace{10pt} denotes a nonsingular point on the parametrized domain.

\vspace{0.2in}
\hspace{45pt}\begin{tikzpicture}
\draw [ultra thick] (1.3,0) -- (2,0);
\draw [ultra thick] (3,0) -- (3.6,0);
\draw [ultra thick] (2,0) to [out=0,in=240] (2.7,0.4);
\draw [ultra thick] (3,0) to [out=180,in=300] (2.3,0.4);
\end{tikzpicture}\hspace{10pt} denotes a node of a nodal parametrized domain.
\vspace{0.2in}

\begin{Definition}
A diagram associated to a picture  is a tensor (\ref{cten}) where $\vss^d$ is a virtual structure sheaf on the moduli space of quasimaps of degree $d$ with parametrized domain $\mathcal{D}$ given by the corresponding picture.
\end{Definition}
For example, we have the following Theorem:

\begin{Theorem}[Section 7.1 of \cite{lectok}]  \label{gthm}
	The gluing matrix equals to the following diagram:
	\begin{center}
		\begin{tikzpicture}
		\node at (-0.5,0) {${\bold{G}}=$};
		\draw [ultra thick] (0,0) -- (2,0);
		\draw [ultra thick] (0.1,0.2) to [out=210,in=150] (0.1,-0.2);
		\draw [ultra thick] (1.9,0.2) to [out=330,in=30] (1.9,-0.2);
		\end{tikzpicture}
	\end{center}
	\vspace{-0.1in}
\end{Theorem}	
\noindent 
By the definition above, this diagram represents a tensor (\ref{cten}) given by quasimaps
with parametrized domain $\mathcal{D}\cong \mathbb{P}^{1}$ with  two relative points $p_1, p_2$, i.e., the tensor ${\bold{G}}$ is given by the following pushforward:  
$$
{\bold{G}}=\sum\limits_{d=0}^{\infty}\, z^d {\text{ev}}_{p_1,*} \otimes {\text{ev}}_{p_2,*} \left(\vss^d \right), 
$$
where $\vss^d$ is a structure sheaf on  the moduli space ${\qm}^d_{\text{relative} \hspace{2pt}p_1,p_2}$.

\subsection{Special tensors and quantum tautological bundles \label{locsec}} 
For this subsection $\mathcal{D}=\mathbb{P}^1$ and $\bG$-fixed points $p_1$
and $p_2$ as in (\ref{points}).
The following definition is due Okounkov \cite{lectok} Section 7.2.
\begin{Definition}
The vector (tensor of rank $1$), defined by  localized $\bG$-equivariant pushforward
\be \label{capver}
V^{(\tau)}(z)=\sum\limits_{d=0}^{\infty} z^d {\rm{ev}}_{p_2, *}\Big(\vss^d \otimes \tau (\left.\qmV\right|_{p_1}) \Big) \in  K_{\bG}(\N_{k,n})_{loc}[[z]],
\ee	
where $\vss^{d}$ is the virtual structure sheaf of $\qm^{d}_{\mathrm{nonsing} \, p_2}$ and $\mathrm{ev}_{p_2}$ is the corresponding evaluation map is called {\rm{bare vertex}} with descendant $\tau$. 
\end{Definition}

Note that the bare vertex corresponds to the diagram:	
\begin{center}
	\begin{tikzpicture}
	\node at (-0.9,0) {${V^{(\tau)}}(z)=$};
	\draw [ultra thick] (0.2,0) -- (2,0);
	\draw [fill] (0.2,0) circle [radius=0.1];
	\draw [very thick] (2.17,0) circle [radius=0.13];
	\node at (0.3,-0.25) {$\tau$};
	\end{tikzpicture}
\end{center}
As we discuss in Section \ref{nonssec},  the evaluation map in (\ref{capver}) is not proper
and the result is defined only via $\bG$-localization to fixed points
by Corollary \ref{nonscorr}.  In practice this means that the coefficients of the power series (\ref{capver}) are allowed to have denominators of the form $1/(1-q)$, which are divergent in the ``delocalization" limit $q\to 1$. 
In Section~\ref{comver} we compute the bare vertex with arbitrary descendent explicitly. We also show that  the power series (\ref{capver}) are expressed via some standard $q$-hypergeometric function with the prescribed divergent behavior in the limit $q\to 1$.

The following definition is due Okounkov \cite{lectok} Section 7.4.
\begin{Definition}
	The vector defined by $\bG$-equivariant pushforward 
	\be \label{capd}
	\hat{V}^{(\tau)}(z)=\sum\limits_{d=0}^{\infty} z^d {\rm{ev}}_{p_2, *}\Big(\vss^d \otimes \tau (\left.\qmV\right|_{p_1}) \Big) \in K_{\bG}(\N_{k,n})
	\ee
where $\vss^{d}$ is the virtual structure sheaf of $\qm^{d}_{\mathrm{relative} \, p_2}$ and $\mathrm{ev}_{p_2}$ is the corresponding evaluation map is called  {\rm{capped vertex}} with descendant $\tau$.			
\end{Definition}
As a diagram, the capped vertex has the following representation:
$$
\begin{tikzpicture}
\node at (-0.9,0) {${{\hat{V}}^{(\tau)}}(z)=$};
\draw [ultra thick] (0.2,0) -- (2,0);
\draw [fill] (0.2,0) circle [radius=0.1];
\draw [ultra thick] (1.9,0.2) to [out=330,in=30] (1.9,-0.2);
\node at (0.3,-0.25) {$\tau$};
\end{tikzpicture}
$$
In contrast with the bare vertex, this tensor is defined via a proper pushforwad. 
Hence, the capped vertex is an element of the integral (non-localized) equivariant K-theory $\hat{V}^{(\tau)}(z)\in K_{\bG}(\N_{k,n})[[z]]$. In particular, it implies that the coefficients of the power series (\ref{capd}) are the  Laurent polynomials in equivariant parameter $q$. The $q\to 1$ limits of the capped vertex is therefore well-defined.

\begin{Definition} \label{qtbdef}
We call the vector 
$
\hat{\tau}(z)=\lim\limits_{q\to 1} \, \hat{V}^{(\tau)}(z)
$
a {\rm quantum tautological bundle} associated with $\tau$. 
\end{Definition}
Note, that the quantum tautological bundle $\hat{\tau}(z)$
can be defined by same formula (\ref{capd}) with $\bT$-equivariant (not $\bG$-equivariant) pushforward ${\rm{ev}}_{p_2, *}$ so that $\hat{\tau}(z)$  is an element  of $K_{\bT}(\N_{k,n})[[z]]$. The last tensor we need is defined by Okounkov \cite{lectok} Section 8.1.6:
\begin{Definition} \label{maindef1}
The rank two tensor defined by the following $\bG$-equivariant pushforward 
\be \label{capping}
\Psi(z)=\sum\limits_{d=0}^{\infty}\, z^{d} \mathrm{ev}_{p_1,*}\otimes \mathrm{ev}_{p_2,*} \Big(  \vss^d \Big) \in K^{\otimes 2}_{\bG}(\N_{k,n})_{loc}[[z]].
\ee
where $\vss^d$ denoted the virtual structure sheaf of
$\qm^{d}_{{{\mathrm{relative}\, p_1}} \atop {{\mathrm{nonsing}\, p_2}}}$ and $\mathrm{ev}_{p_i,*}$ are the corresponding push-forward maps is called {\rm capping operator}.
\end{Definition}
In the diagrammatic notations we have:
$$
	\begin{tikzpicture}
	\node at (-0.9,0) {$\Psi(z)=$};
	\draw [ultra thick] (0.2,0) -- (2,0);
	\draw [very thick] (0.11,0) circle [radius=0.13];;
	\draw [ultra thick] (1.9,0.2) to [out=330,in=30] (1.9,-0.2);
	\end{tikzpicture}
$$	
Similarly to the bare descendent vertex this is an element of the localized K-theory
due to the nonsingular condition at $p_2$. In particular, the capping operator is  divergent in the limit $q\to 1$. 

Using the standard pairing on the equivariant K-theory:
\be
\label{bf}
{(\mathcal{F},\mathcal{G})}=\chi(\mathcal{F}\otimes\mathcal{G}),
\ee
one can think about this tensor as operator acting from the first copy of  $K_{\bG}(\N_{k,n})_{loc}$ to the second. The name ``capping operator'' is justified by the following theorem.

\begin{Theorem}[\cite{lectok}, Section 7.4]
	The capping operator maps the bare vertex with descendants to the capped vertex with descendants:
	\vspace{0.2in}
	\begin{center}
		\begin{tikzpicture}
		\draw [ultra thick] (0,0) -- (2,0);
		\draw [fill] (2,0) circle [radius=0.1];
		\draw [ultra thick] (0.1,0.2) to [out=210,in=150] (0.1,-0.2);
		\node at (2.3,0) {$\tau$};
		\node at (2.9,0) {$=$};
		\draw [ultra thick] (3.4,0) -- (4.4,0);
		\draw [ultra thick] (0.1+3.4,0.2) to [out=210,in=150] (0.1+3.4,-0.2);
		\draw [very thick] (4.53,0) circle [radius=0.13];
		\draw [very thick] (5,0) circle [radius=0.13];
		\draw [ultra thick] (5.13,0) -- (6.13,0);
		\draw [fill] (6.13,0) circle [radius=0.1];
		\node at (6.43,0) {$\tau$};
		\end{tikzpicture}
	\end{center}
	or, explicitly, the pushforwards   (\ref{capver}),(\ref{capd}), (\ref{capping}) are related as follows:
\be \label{capth}
	{{\hat{V}}^{(\tau)}}(z)=\Psi(z){{{V}}^{(\tau)}}(z). 
\ee
\end{Theorem}
\begin{proof}
The formula (\ref{capth}) can be obtained from computing the capped vertex (\ref{capd}) by $\C^{\times}_{q}$-localization. First, we analyze the fixed stack $(\qm_{{\mathrm{relative}},p_2})^{\C^{\times}_{q}}$.
Let us consider the quasimap  ${\bf{m}} \in (\qm_{{\mathrm{relative}},p_2})^{\C^{\times}_{q}}$.  The restriction of the quasimap data to the parametrized component $\left.{\bf{m}}\right|_{\mathcal{C}_0}$ is an element of $\qm_{{\mathrm{nonsing}},p_2}^{\C^{\times}_{q}}$ because it is both non-singular at $p_2$ and $\C^{\times}_{q}$-fixed.

We denote by $\qm^{\sim}_{p_2',p_2}$ the stack of qusimaps with ``parametrized domain'' given by a single points $p_2$ with one relative point $p_2$ (same point). 
In other words, this is the stack of quasimaps from Definition \ref{qmdef} with $\mathcal{C}=\mathcal{D}=p_2$.

The restriction of the quasimap data  $\left.{\bf{m}}\right|_{\tilde{\mathcal{C}}}$
(here ${\tilde{\mathcal{C}}}$ is the closure of $\mathcal{C}\setminus \mathcal{C}_0$) is a quasimap nonsingular at $p_2, p_2'\in {\tilde{\mathcal{C}}}$. 
In addition $\pi({\tilde{\mathcal{C}}})=p_2$ and thus $\left.{\bf{m}}\right|_{\tilde{\mathcal{C}}} \in \qm^{\sim}_{p_2',p_2}$. 

In reverse, two quasimaps from $\qm_{{\mathrm{nonsing}},p_2}^{\C^{\times}_{q}}$ and $\qm^{\sim}_{p_2',p_2}$, whose values at $p_2$ agree, define a quasimap from  $(\qm_{{\mathrm{relative}},p_2})^{\C^{\times}_{q}}$. We conclude:
\be \label{qfixed}
(\qm_{{\mathrm{relative}},p_2})^{\C^{\times}_{q}}=\qm^{\sim}_{p_2',p_2}\times_{\N_{k,n}} (\qm_{{\mathrm{nonsing}},p_2})^{\C^{\times}_{q}},
\ee 
where $\times_{\N_{k,n}}$ denotes a fiber product
with respect to the evaluation maps $\textrm{ev}_{p_2}$.

 For quasimaps in $\qm^{\sim}_{p_2,p_2'}$ the values $f(p_2)$ and $f(p_2')$ are stable and thus we have two well-defined evaluation maps:  
$\mathrm{ev}_{p_2}(f)=f(p_2)$ and $\mathrm{ev}_{p_2'}(f)=f(p_2')$ (note a small difference between this and Definition \ref{evmapsdef}).

Let $N_{p_2}$ denote the virtual normal bundle to (\ref{qfixed}). This normal bundle has the form $N_{p_2}=\psi_{p_2}\otimes \C_q$ where $\psi_{p_2}$ is a line bundle on $\qm^{\sim}_{p_2',p_2}$ and $\C_q$ is a trivial $\C^{\times}_q$-equivariant line bundle over the fixed stack corresponding to the character  $q$  see Section 7.1 of \cite{lectok}. The localization formula then gives:
\begin{small} 
\begin{align}\label{qloc}
\begin{array}{l}
\hat{V}^{(\tau)}(z)=\\
\Big(\sum_{d} z^d  \mathrm{ev}_{p_2',*} \otimes \mathrm{ev}_{p_2,*} \Big(\qm^{\sim}_{p_2,p_2'},\dfrac{\vss}{1-N^{\vee}_{p_2}}\Big)\Big) \times 
 \Big(\sum_{d} z^d \mathrm{ev}_{p_2,*} \Big(\qm^{\C^{\times}_{q}}_{{\mathrm{nonsing}},p_2},\vss \otimes \tau (\left.\qmV\right|_{p_1}) \Big)\Big).
 \end{array}
\end{align}
\end{small}
Here the first and the second factor represent rank two and rank one $K$-theory-valued tensors and $\times$ denotes the contraction of these tensors over the indices corresponding to $\mathrm{ev}_{p_2,*}$. The second factor is exactly the bare vertex with descendent $\tau$:
$$
{V}^{(\tau)}(z)=\sum_{d} z^d \mathrm{ev}_{p_2,*} \Big(\qm^{\C^{\times}_{q}}_{{\mathrm{nonsing}},p_2},\vss \otimes \tau (\left.\qmV\right|_{p_1}) \Big).
$$ 
Similarly we analyze the capping operator (\ref{capping}) and the corresponding moduli space $\qm^{d}_{{{\textrm{relative}\, p_2}} \atop {{\textrm{nonsing}\, p_1}}}$. For a $\C^{\times}_{q}$-fixed quasimap  ${\bf{m}}\in(\qm^{d}_{{{\textrm{relative}\, p_2}} \atop {{\textrm{nonsing}\, p_1}}})^{\C^{\times}_2}$ the set of singularities 
$B\cap {\mathcal{C}_0}$ must be a subset of the fixed set $\{p_1,p_2\}$. However, by Definition \ref{qmdef} such a quasimap is also non-singular at $\{p_1,p_2\}$. Thus, being restricted to ${\mathcal{C}}_0$, section $f$ is a section with no poles and zeros, and therefore is a constant. We conclude that the restriction of the quasimap data $\left.{\bf{m}}\right|_{{\mathcal{C}}_0}$ is trivial 
and the quasimap $\bf{m}$ is completely determined by its restriction $\left.{\bf{m}}\right|_{\tilde{\mathcal{C}}}$.   
Therefore,
$$
(\qm^{d}_{{{\textrm{relative}\, p_2}} \atop {{\textrm{nonsing}\, p_1}}})^{\C^{\times}_q}=\qm^{\sim}_{p_2',p_2}.
$$
The $\C^{\times}_q$-localization for the capping operator gives:
$$
\Psi(z)=\sum_{d} z^d  \mathrm{ev}_{p_2',*} \otimes \mathrm{ev}_{p_2,*} \Big(\qm^{\sim}_{p_2,p_2'},\dfrac{\vss}{1-N^{\vee}_{p_2}}\Big),
$$
which is the first factor in (\ref{qloc}). 
\end{proof}
Let us note that (\ref{capth}) is quite nontrivial: both the capping operator and the bare vertex on the right side of this equality are elements of the localized K-theory, while the capped vertex on the left is a class in the integral K-theory. This means that the divergent terms of the capping operator and the bare vertex in the limit $q\to 1$ cancel each other, so that the limit of the product is well-defined. This observation plays an important role in the computations of Section 4.

In Section \ref{comver} we use localization to explicitly
compute the bare vertex. The capping operator,
and thus (by the previous theorem) the capped vertex, can be computed as the fundamental solution of certain $q$-difference equations which we discuss below.

\subsection{Quantum difference equation}

Let us consider quasimaps with parametrized domain $\mathcal{D}=\mathbb{P}^1$ and let $p_1, p_2\in \mathcal{D}$ be the fixed points of $\mathbb{C}^{\times}_q$-action as in (\ref{points}).

\begin{Theorem}[Theorem 8.1.16 of \cite{lectok}]\label{Mop}
	
	The capping operator $\Psi(z)$ is the fundamental solution matrix of the quantum difference equation:
	\be
	\label{difference}
	\Psi(qz)=\M(z) \Psi(z) \O(1)^{-1},
	\ee
	where $\O(1)$ is the operator of classical multiplication by the corresponding line bundle on $\N_{k,n}$ and $\M(z)\in K_{\bG}(\N_{k,n})^{\otimes 2}[[z]]$ is defined by
	\begin{align}
	\label{qdeop}
	\M(z)=\sum\limits_{d=0}^{\infty} z^d {\rm{ev}}_{p_1 , \ast}\otimes {\rm{ev}}_{p_2, \ast} \left(\qm^{d}_{{\rm{relative}}\, p_1,p_2},\vss\otimes \det H^{\bullet}\left(\qmV\otimes \pi^{\ast}(\mathcal{O}_{p_1})\right)\right)\bold{G}^{-1},
	\end{align}
	where $\pi:\mathcal{C}\to {\mathcal{D}}$ is a part of the quasimap data, $\mathcal{O}_{p_1}$ is a class of
	point $p_1\in {\mathcal{D}}$ and ${\rm{ev}}_{p_1 , \ast}, {\rm{ev}}_{p_2, \ast}$ are $\bG$-equivariant push-forward maps. 
\end{Theorem}

\begin{proof}
Consider the $q$-equivariant coherent sheaf $\mathscr{F}$ over $\mathcal{D}$. We have 
\begin{eqnarray}
\det H^{\bullet}
(\mathscr{F}\otimes (\mathcal{O}_{p_1}-\mathcal{O}_{p_2}))=q^{{\rm deg}(\mathscr{F})},
\end{eqnarray}
To show that it is enough to check it for $\mathcal{O}$ and $\mathcal{O}_{p_i}$.  Since ${\rm deg} (\pi_*\mathscr{V})={\rm deg} (\mathscr{V})$, we have:
\begin{eqnarray}
\det H^{\bullet}
(\mathscr{V}\otimes \pi^*(\mathcal{O}_{p_1}-\mathcal{O}_{p_2}))=q^{{\rm deg}(\mathscr{V})}.
\end{eqnarray}
By Definition \ref{defdegree} we have $d=\deg \mathscr{V}$ and thus we can write: 
\begin{eqnarray}\label{qshift}
(z q)^d= z^d \det H^{\bullet}
(\mathscr{V}\otimes \pi^*(\mathcal{O}_{p_1}-\mathcal{O}_{p_2})).
\end{eqnarray}
Now let us have a look at the capping operator:
\be 
\Psi(z)=\sum\limits_{d=0}^{\infty}\, z^{d} \textrm{ev}_{p_1*} \otimes \textrm{ev}_{p_2*} \Big( \qm^{d}_{{{\textrm{relative}\, p_1}} \atop {{\textrm{nonsing}\, p_2}}}, \vss \Big) .
\ee
By (\ref{qshift}) we have:
$$
\Psi(z q)=\sum\limits_{d=0}^{\infty}\, z^{d} \textrm{ev}_{p_1*} \otimes \textrm{ev}_{p_2*}\Big( \qm^{d}_{{{\textrm{relative}\, p_1}} \atop {{\textrm{nonsing}\, p_2}}}, \vss \otimes \det H^{\bullet}
(\mathscr{V}\otimes \pi^*(\mathcal{O}_{p_1}-\mathcal{O}_{p_2})) \Big). 
$$
As in Section \ref{glf} we consider a degeneration of the parametrized component into a union of two rational curves ${\mathcal{D}}={\mathcal{D}}_1\cup_p {\mathcal{D}}_2$ with one node $p$. Then, gluing formula  (\ref{gluingth}) gives
$$
\begin{array}{l}
\Psi(z q)=\Big(\sum\limits_{d=0}^{\infty}\, z^{d} \textrm{ev}_{p_1} \otimes \textrm{ev}_{p} \Big( \qm^{d}_{{{\textrm{relative}\, p_1}} \atop {{\textrm{relative}\, p}}}, \vss \otimes \det H^{\bullet}
(\mathscr{V}\otimes \pi^*(\mathcal{O}_{p_1})) \Big)\Big)  \times \bold{G}^{-1}\\
\times \Big(\sum\limits_{d=0}^{\infty}\, z^{d} \textrm{ev}_{p} \otimes \textrm{ev}_{p_2} \Big( \qm^{d}_{{{\textrm{relative}\, p}} \atop {{\textrm{nonsing}\, p_2}}}, \vss \otimes \det H^{\bullet}
(\mathscr{V}\otimes \pi^*(-\mathcal{O}_{p_2})) \Big)\Big).
\end{array}
$$
The first two factors on the right side give $\M(z)$ by definition (\ref{qdeop}). For the last factor we note that $\pi$ is locally an isomorphism at the point $p_2$
because $p_2$ is not a relative point. Thus 
$H^{\bullet}(\mathscr{V}\otimes \pi^*(\mathcal{O}_{p_2}))=\mathscr{V}_{p_2}=\tb_{f(p_2)}$
where $\tb$ is the tautological bundle over $\N_{k,n}$. The last equality holds by the definition of the evaluation map.
We conclude that $H^{\bullet}(\mathscr{V}\otimes \pi^*(\mathcal{O}_{p_2}))=\textrm{ev}^{*}_{p_2}(\tb)$ and 
$\det H^{\bullet}
(\mathscr{V}\otimes \pi^*(\mathcal{O}_{p_2}))=\textrm{ev}^{*}_{p_2}(\det \tb)$.

In the K-theory of $\N_{k,n}$ we have 
$\det\tb ={\mathcal{O}}(1)$ and thus 
$$
\sum\limits_{d=0}^{\infty}\, z^{d} \textrm{ev}_{p} \otimes \textrm{ev}_{p_2} \Big( \qm^{d}_{{{\textrm{relative}\, p}} \atop {{\textrm{nonsing}\, p_2}}}, \vss \otimes \det H^{\bullet}
(\mathscr{V}\otimes \pi^*(-\mathcal{O}_{p_2})) \Big)=\Psi(z) {\mathcal{O}}(1)^{-1}. 
$$
Finally, $\Psi(0)=\mathcal{O}_{\N_{k,m}}$ which is a boundary condition for the fundamental solution.
\end{proof}
By definition, (\ref{qdeop}) is a power series $\M(z)=\sum_{d=0}^{\infty} z^d \M_{d}$. It  was computed explicitly for Nakajima varieties in \cite{os}, in particular the following Theorem holds:
\begin{Theorem}[\cite{os}]
For the given $k, n$ the power series $\M(z)=\sum_{d=0}^{\infty} z^d \M_{d}$ is a Taylor expansion of a rational function, i.e., $\M(z) \in K_{\bG}(\N_{k,n})(z)$.	
\end{Theorem}

Once  $\M(z)$ is known explicitly, the corresponding $q$-difference equation (\ref{difference}) turns into a system of linear equations for unknown coefficients $\Psi_i$:
$$
\Psi(z)=\mathcal{O}_{\N_{k,m}}+\Psi_1 z+\Psi_2 z^2+\dots. 
$$ 
This provides the most efficient tool for computing the capping operators $\Psi(z)$.
For the case $\N_{k,n}$, which we investigate in this paper, the  operator $\M(z)$ 
is described explicitly in the Section 7.3.6 of \cite{os}.

\section{Quantum K-theory ring of $\N_{k,n}$}

\subsection{Multiplication in quantum K-theory \label{muldef}}

Recall that we denote by $\bold{G}$ the gluing matrix  given by Theorem \ref{gthm}.  Using the standard scalar pairing in
$K_{\bT}(\N_{k,n})$, namely (\ref{bf}),  
 we can think about this tensor 
as a linear operator, i.e.,  $\bold{G} \in End(K_{\bT}(\N_{k,n}))[[z]]$.

Let ${\qm}^{d}_{ \,p_1,p_2,p_3}$ be the moduli space of quasimaps from ${\mathbb{P}}^1$ with 3 relative points 
and let $\vss^d$ be the virtual structure sheaf on this moduli space.  Given a class $\mathcal{F}\in K_{\bT}(\N_{k,n})$, one can construct the following tensor:
\be
\label{qprod}
\mathcal{F}\circledast:= 
\left(\sum\limits_{d=0}^{\infty}\,z^d {\text{ev}}_{p_1\ast}\times {\text{ev}}_{p_3\ast} \left(\text{ev}^{\ast}_{p_2}({\bold{G}}^{-1}\mathcal{F})\otimes \vss^d \right)\right) {\bold{G}}^{-1}.
\ee
By definition, $\mathcal{F}\circledast$ is a rank two tensor, which, thanks to the scalar product in K-theory, can be identified with the linear operator:
$$
\mathcal{F}\circledast\in End({K_{\bT}(\N_{k,n})})[[z]].
$$
\begin{Definition}
We call $\mathcal{F}\circledast$ the operator of quantum multiplication by a class $\mathcal{F}$. 	
\end{Definition}
As a diagram, this operator can be represented  in the following form:
\vspace{-0.1in}
\begin{center}
	\be \label{qmuldef}
	\begin{tikzpicture}
	\node at (-1,-0) {$\mathcal{F} \circledast=$}; 
	\draw [ultra thick] (0,0) -- (2,0);
	\draw [ultra thick] (0.1,0.2) to [out=210,in=150] (0.1,-0.2);
	\draw [ultra thick] (1.9,0.2) to [out=330,in=30] (1.9,-0.2);
	\draw [ultra thick] (0.9,0.2) to [out=330,in=30] (0.9,-0.2);
	\node at (1,-0.4) {${\bold{G}}^{-1}\mathcal{F}$};
	\node at (2.5,-0) {${\bold{G}}^{-1}$};
	\end{tikzpicture}
	\ee
\end{center}
\vspace{0.1in}
Note, that the moduli space of degree zero quasimaps is isomorphic to $\N_{k,n}$, since a degree zero quasimap maps the entire curve to a single point in $\N_{k,n}$. This implies that the zeroth coefficient
of the power series (\ref{qprod}) has the form: 
\be \label{relcla}
\mathcal{F} \circledast  =\mathcal{F} \otimes +\dots ,
\ee
where $\mathcal{F} \otimes$ is the operator of tensor multiplication by $\mathcal{F}$ in the equivariant $K$-theory, i.e. for $\mathcal{G} \in {K_{\bT}(\N_{k,n})}$
$$
\mathcal{F} \otimes: \mathcal{G} \mapsto \mathcal{F} \otimes \mathcal{G} 
$$
and dots in (\ref{relcla}) stand for the terms vanishing in $z\to 0$ limit (called {\it classical limit}).
\begin{Definition} \label{maindef2}
	We call  $QK_{\bT}(\N_{k,n}):=K_{\bT}(\N_{k,n})[[z]]$ endowed with the multiplication~(\ref{qprod}) the   quantum equivariant $K$-theory ring of $\N_{k,n}$.
\end{Definition}
Let us note note these definitions of quantum product and quantum $K$-theory ring are new and introduced in this paper for the first time. These definitions apply directly to varieties for which quasimaps are defined, for example, Nakajima varieties.

\noindent
We are now ready to prove the following result.
\begin{Theorem}
	The quantum $K$-theory ring $QK_{\bT}(\N_{k,n})$ is a commutative, associative unital algebra.
\end{Theorem}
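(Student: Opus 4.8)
The plan is to establish the three claimed properties — commutativity, associativity, and the existence of a unit — separately, each reducing to a degeneration argument on moduli of quasimaps from $\mathbb{P}^1$. First I would treat commutativity. The operator $\mathcal{F}\circledast$ acts on the second tensor factor of $K_{\bT}(\N_{k,n})^{\otimes 2}$, so commutativity of the product means symmetry of the resulting tensor in $K_{\bT}(\N_{k,n})^{\otimes 2}$ together with compatibility of the bilinear form $(\cdot,\cdot)$ from \rf{bf}. Here the key input is that the moduli space ${\qm}^d_{p_1,p_2,p_3}$ with three relative points carries an $S_3$-action permuting $p_1,p_2,p_3$ — all three points play symmetric roles as relative points — and that $\vss$ together with the correction factors $q^{\deg(\qmP)/2}$ in \rf{virdef} are invariant under this permutation. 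Combining this symmetry with the self-adjointness of $\bold{G}$ with respect to $(\cdot,\cdot)$ (a property established in Section 6.5 of \cite{lectok}) yields $(\mathcal{F}\circledast\mathcal{G},\mathcal{H})=(\mathcal{G}\circledast\mathcal{F},\mathcal{H})=(\mathcal{F},\mathcal{G}\circledast\mathcal{H})$ for all classes, hence commutativity and also the Frobenius (cyclic) property of the structure constants.

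Next, associativity. The standard strategy, going back to the cohomological case, is to compute a four-point invariant — here $\chi\big({\qm}^d_{p_1,p_2,p_3,p_4},\dots\big)$ with four relative points — in two ways by degenerating the base $\mathbb{P}^1$ into a chain of two $\mathbb{P}^1$'s, carrying $p_1,p_2$ on one component and $p_3,p_4$ on the other, versus the degeneration carrying $p_1,p_3$ on one and $p_2,p_4$ on the other. The flatness of the family $\qm(\mathcal{C}_\varepsilon\to X)$ asserted above guarantees both degenerations compute the same $K$-theoretic count, and the degeneration/gluing formula of Section 2.2 — which inserts precisely one factor $\bold{G}^{-1}$ at each node — rewrites each side as a composition of two quantum-multiplication operators. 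The two resulting expressions are $(\mathcal{F}\circledast\mathcal{G})\circledast\mathcal{H}$ and $\mathcal{F}\circledast(\mathcal{G}\circledast\mathcal{H})$ respectively (using commutativity to match the remaining factors), so they agree. The bookkeeping of the $\bold{G}^{-1}$ insertions — making sure that the gluing operators inserted by the degeneration formula cancel exactly against those built into the definition \rf{qprod} — is the part that must be done carefully.

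Finally, the unit. One sets $\mathbf{1}_{QK} := \bold{G}\,\O_{\N_{k,n}} \in K_{\bT}(\N_{k,n})[[z]]$, the image of the classical structure sheaf under the gluing operator; this is exactly the element of $K_{\bT}(\N_{k,n})[[z]]$ (not of $K_{\bT}(\N_{k,n})$) anticipated in the discussion preceding the Definition. To see it is a two-sided unit, one degenerates once more: inserting $\bold{G}^{-1}\mathbf{1}_{QK}=\O_{\N_{k,n}}$ at the relative point $p_2$ collapses the corresponding leg, and the relevant quasimap count with that insertion reduces to the identity map on $K_{\bT}(\N_{k,n})$ by the degree-zero identification $\qm^0_{\cdots}\cong\N_{k,n}$ together with the fact that inserting $\ev_{p}^*\O$ at a relative point is transparent in the degeneration formula. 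Commutativity then upgrades this to a two-sided unit.

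The main obstacle I expect is the associativity bookkeeping: one must verify that the gluing operators $\bold{G}^{\pm1}$ introduced by the degeneration formula at the new node, those already present in the two three-pointed building blocks, and the self-adjointness of $\bold{G}$ with respect to the twisted form $(\cdot,\cdot)$, all combine so that no stray factor of $\bold{G}$ survives — and that the $\mathscr{K}^{1/2}_{\textrm{vir}}$ twist behaves multiplicatively under the degeneration. This is precisely where the symmetrized virtual structure sheaf \rf{virdef} and the modified bilinear form \rf{bf} were engineered to make things work, so the proof is essentially an application of the formalism of \cite{lectok}; I would cite the relevant results there for the compatibility of $\bold{G}$, $\vss$, and the degeneration formula rather than reprove them.
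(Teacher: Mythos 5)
Your treatment of commutativity and associativity is essentially the paper's: commutativity comes from the symmetry of $\qm^d_{p_1,p_2,p_3}$ under exchanging the two insertion points (the paper literally "switches $p_2$ and $p_3$"), and associativity is proved by degenerating a four-relative-point curve in two channels and cancelling the $\bold{G}^{\pm1}$ insertions against the gluing formula — exactly the picture proof in the paper. Your extra care about the self-adjointness of $\bold{G}$ and the multiplicativity of $\mathscr{K}^{1/2}_{\textrm{vir}}$ under degeneration is appropriate and is indeed delegated to \cite{lectok} in the paper as well.

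The gap is in the unit. You propose $\mathbf{1}_{QK}=\bold{G}\,\O_{\N_{k,n}}$ and justify unitality by asserting that "inserting $\textrm{ev}_p^*\O$ at a relative point is transparent in the degeneration formula." That assertion is not a consequence of the degeneration formula and is precisely the subtlety the construction is designed to circumvent: a relative point changes the moduli space (it allows chains of non-rigid $\mathbb{P}^1$'s to bubble off at $p_2$), so the pushforward from $\qm^d_{p_1,p_2,p_3}$ with the trivial class at $p_2$ does not equal the pushforward from $\qm^d_{p_1,p_3}$. What the paper actually proves is the correct version of your "transparency": starting from $\bold{Tube}\cdot\bold{G}^{-1}=\mathrm{Id}$, one inserts an \emph{absolute} point with trivial class (which genuinely is transparent), and then degenerates so that this point bubbles onto a new component; the gluing formula shows that the class whose insertion at a relative point is transparent is $\bold{G}^{-1}\hat{\bold{1}}(z)$, where
$$
\hat{\bold{1}}(z)=\sum_{d\ge 0} z^d\, \textrm{ev}_{p_2,*}\Big(\qm^d_{\textrm{relative}\,p_2},\vss\Big)
$$
is the capped vertex with trivial descendant. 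Your candidate would be correct only if $\bold{G}^{-1}\hat{\bold{1}}(z)=\O_{\N_{k,n}}$, i.e.\ $\hat{\bold{1}}(z)=\bold{G}\,\O_{\N_{k,n}}$, which is an identity you neither state nor prove (and note that $\bold{G}\,\O$, computed through the twisted form (\ref{bf}), acquires a $K^{-1/2}$ insertion at a relative point, so it is not the capped vertex on its face). To close the gap you should define the unit as the capped vertex $\hat{\bold{1}}(z)$ and run the absolute-point/degeneration argument, rather than postulating transparency of trivial insertions at relative points.
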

\begin{proof}
	Every statement of this theorem, except existence of the unity, follows directly from our definitions and the gluing formula.  Commutativity of this algebra follows from the construction of multiplication operator, by switching points $p_2$ and $p_3$, since our definition (\ref{qprod}) is symmetric with respect to $p_1\leftrightarrow
	p_3$. 
	
	Associativity of this ring follows from the fact that operators of quantum multiplication by two different sheafs $\mathcal{F}$ and $\mathcal{G}$ commute. The picture proof of this is as follows:
	\vspace{0.1in}
	\begin{center}
		\begin{tikzpicture}
		\draw [ultra thick] (0,0) -- (2,0);
		\draw [ultra thick] (0.1,0.2) to [out=210,in=150] (0.1,-0.2);
		\draw [ultra thick] (1.9,0.2) to [out=330,in=30] (1.9,-0.2);
		\draw [ultra thick] (0.9,0.2) to [out=330,in=30] (0.9,-0.2);
		\node at (1,-0.4) {${\bold{G}}^{-1}\mathcal{F}$};
		\node at (2.45,-0) {${\bold{G}}^{-1}$};
		\node at (3.1,0) {$\times$};
		\draw [ultra thick] (3.5,0) -- (5.5,0);
		\draw [ultra thick] (3.6,0.2) to [out=210,in=150] (3.6,-0.2);
		\draw [ultra thick] (5.4,0.2) to [out=330,in=30] (5.4,-0.2);
		\draw [ultra thick] (4.4,0.2) to [out=330,in=30] (4.4,-0.2);
		\node at (4.5,-0.4) {${\bold{G}}^{-1}\mathcal{G}$};
		\node at (5.95,-0) {$\bold{G}^{-1}$};
		\node at (6.65,-0) {$=$};
		\node at (6.65,0.3) {$\bold{1}$};
		\draw [ultra thick] (6.2,-1) -- (8.2,-1);
		\draw [ultra thick] (6.3,-0.8) to [out=210,in=150] (6.3,-1.2);
		\draw [ultra thick] (8.2,-1) to [out=0,in=240] (8.9,-0.6);
		\draw [ultra thick] (9.2,-1) to [out=180,in=300] (8.5,-0.6);
		\draw [ultra thick] (9.2,-1) -- (11.2,-1);
		\draw [ultra thick] (7.4,-0.8) to [out=330,in=30] (7.4,-1.2);
		\draw [ultra thick] (9.9,-0.8) to [out=330,in=30] (9.9,-1.2);
		\draw [ultra thick] (11.1,-0.8) to [out=330,in=30] (11.1,-1.2);
		\node at (7.5,-1.4) {${\bold{G}}^{-1}\mathcal{F}$};
		\node at (10,-1.4) {${\bold{G}}^{-1}\mathcal{G}$};
		\node at (11.65,-1) {$\bold{G}^{-1}$};
		\node at (12.35,-1) {$=$};
		\node at (12.35,-0.7) {$\bold{2}$};
		\draw [ultra thick] (0+1.8,-2) -- (3+1.8+0.3,-2);
		\draw [ultra thick] (0.1+1.8,-1.8) to [out=210,in=150] (0.1+1.8,-2.2);
		\draw [ultra thick] (1.9+1.8+0.3,-1.8) to [out=330,in=30] (1.9+1.8+0.3,-2.2);
		\draw [ultra thick] (0.9+1.8,-1.8) to [out=330,in=30] (0.9+1.8,-2.2);
		\draw [ultra thick] (2.9+1.8+0.3,-1.8) to [out=330,in=30] (2.9+1.8+0.3,-2.2);
		\node at (3.45+1.8+0.3,-2) {$\bold{G}^{-1}$};
		\node at (1+1.8,-2.4) {${\bold{G}}^{-1}\mathcal{F}$};
		\node at (2+1.8+0.3,-2.4) {${\bold{G}}^{-1}\mathcal{G}$};
		\node at (4.15+1.8+0.3,-2) {$=$};
		\node at (4.15+1.8+0.3,-2+0.3) {$\bold{3}$};
		\draw [ultra thick] (4.6+1.8+0.3,-2) -- (7.6+1.8+0.6,-2);
		\draw [ultra thick] (4.7+1.8+0.3,-1.8) to [out=210,in=150] (4.7+1.8+0.3,-2.2);
		\draw [ultra thick] (5.5+1.8+0.6,-1.8) to [out=330,in=30] (5.5+1.8+0.6,-2.2);
		\draw [ultra thick] (6.5+1.8+0.6,-1.8) to [out=330,in=30] (6.5+1.8+0.6,-2.2);
		\draw [ultra thick] (7.5+1.8+0.6,-1.8) to [out=330,in=30] (7.5+1.8+0.6,-2.2);
		\node at (8.05+1.8+0.6,-2) {$\bold{G}^{-1}$};
		\node at (5.6+1.8+0.3,-2.4) {${\bold{G}}^{-1}\mathcal{G}$};
		\node at (6.6+1.8+0.6,-2.4) {${\bold{G}}^{-1}\mathcal{F}$};
		\node at (8.75+1.8+0.6,-2) {$=$};
		\node at (8.75+1.8+0.6,-2+0.3) {$\bold{4}$};
		\draw [ultra thick] (6.2-6.2,-1-2.2) -- (8.2-6.2,-1-2.2);
		\draw [ultra thick] (6.3-6.2,-0.8-2.2) to [out=210,in=150] (6.3-6.2,-1.2-2.2);
		\draw [ultra thick] (8.2-6.2,-1-2.2) to [out=0,in=240] (8.9-6.2,-0.6-2.2);
		\draw [ultra thick] (9.2-6.2,-1-2.2) to [out=180,in=300] (8.5-6.2,-0.6-2.2);
		\draw [ultra thick] (9.2-6.2,-1-2.2) -- (11.2-6.2,-1-2.2);
		\draw [ultra thick] (7.4-6.2,-0.8-2.2) to [out=330,in=30] (7.4-6.2,-1.2-2.2);
		\draw [ultra thick] (9.9-6.2,-0.8-2.2) to [out=330,in=30] (9.9-6.2,-1.2-2.2);
		\draw [ultra thick] (11.1-6.2,-0.8-2.2) to [out=330,in=30] (11.1-6.2,-1.2-2.2);
		\node at (7.5-6.2,-1.4-2.2) {${\bold{G}}^{-1}\mathcal{G}$};
		\node at (10-6.2,-1.4-2.2) {${\bold{G}}^{-1}\mathcal{F}$};
		\node at (11.65-6.2,-1-2.2) {$\bold{G}^{-1}$};
		\node at (12.35-6.2,-1-2.2) {$=$};
		\node at (12.35-6.2,-1-2.2+0.3) {$\bold{5}$};
		\draw [ultra thick] (0+5.7,0-4.2) -- (2+5.7,0-4.2);
		\draw [ultra thick] (0.1+5.7,0.2-4.2) to [out=210,in=150] (0.1+5.7,-0.2-4.2);
		\draw [ultra thick] (1.9+5.7,0.2-4.2) to [out=330,in=30] (1.9+5.7,-0.2-4.2);
		\draw [ultra thick] (0.9+5.7,0.2-4.2) to [out=330,in=30] (0.9+5.7,-0.2-4.2);
		\node at (1+5.7,-0.4-4.2) {${\bold{G}}^{-1}\mathcal{G}$};
		\node at (2.45+5.7,-0-4.2) {${\bold{G}}^{-1}$};
		\node at (3.1+5.7,0-4.2) {$\times$};
		\draw [ultra thick] (3.5+5.7,0-4.2) -- (5.5+5.7,0-4.2);
		\draw [ultra thick] (3.6+5.7,0.2-4.2) to [out=210,in=150] (3.6+5.7,-0.2-4.2);
		\draw [ultra thick] (5.4+5.7,0.2-4.2) to [out=330,in=30] (5.4+5.7,-0.2-4.2);
		\draw [ultra thick] (4.4+5.7,0.2-4.2) to [out=330,in=30] (4.4+5.7,-0.2-4.2);
		\node at (4.5+5.7,-0.4-4.2) {${\bold{G}}^{-1}\mathcal{F}$};
		\node at (5.95+5.7,-0-4.2) {$\bold{G}^{-1}$};
		\end{tikzpicture}
	\end{center}
	
	\vspace{0.1in}
	\noindent Here the equalities $\bold{1},\bold{2},\bold{4}, \bold{5}$ are the examples of degeneration and gluing formulas which we discussed in  Section \ref{glf}. The equality $\bold{3}$ is a deformation of the parametrized component. The existence and
	properties of multiplicative identity element in $QK_{\bT}(\N_{k,n})$ is discussed in the next subsection.
\end{proof}

\subsection{Multiplicative identity element of $QK_{\bT}(\N_{k,n})$ }
Until now, most of the definitions and statements about the quantum equivariant $K$-theory were analogous to ones in  quantum cohomology. However, there is one major difference related to the structure of the multiplicative identity element in the ring $QK_{\bT}(\N_{k,n})$.
In quantum cohomology, the element representing the multiplicative identity with respect to the quantum product coincides with the multiplicative identity of the classical theory, i.e. it is given by the fundamental class. In the quantum $K$-theory it is not true anymore: the multiplicative identity in the quantum $K$-theory ring \textit{does not always coincide} with
the structure sheaf $\O_{\N_{k,n}}$.\footnote{More precisely, the identity element of the quantum K-theory ring coincides with its classical version $\O_{\N_{k,n}}$ in the case $n\geq 2k$. We, however, will not discuss this property of the quantum K-theory ring in this paper.}

Let $\hat{\bold{1}}(z)\in QK_{\bT}(\N_{k,n})$  be the quantum tautological bundle from Definition \ref{qtbdef} for $\tau={\mathcal{O}}_{\N_{k,n}}$:
\be
\label{mulid}
\hat{\bold{1}}(z)= \sum\limits_{d=0}^{\infty} z^d \textrm{ev}_{p_2, *}\Big(\qm^{d}_{\textrm{relative} \, p_2}, \vss^d   \Big).
\ee

\begin{Theorem}
	$\hat{\bold{1}}(z)$ is the multiplicative identity of the quantum $K$-theory ring, i.e.,
	$\hat{\bold{1}}(z) \circledast \alpha =\alpha$ for all $\alpha \in QK_{\bT}(\N_{k,n})$.
\end{Theorem}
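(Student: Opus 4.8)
The plan is to verify the identity $\hat{\bold{1}}(z) \circledast \alpha = \alpha$ directly from the definition \eqref{qprod} of the quantum product, using the gluing and degeneration machinery just developed. Unwinding \eqref{qprod}, the operator $\hat{\bold{1}}(z)\circledast$ acting on $\alpha$ is a sum over degrees of pushforwards from the three-pointed relative quasimap spaces ${\qm}^{d}_{p_1,p_2,p_3}$, with the class $\mathrm{ev}_{p_2}^*({\bold{G}}^{-1}\hat{\bold{1}}(z))$ inserted at $p_2$ and $\alpha$ fed in at $p_3$ through the bilinear form, all twisted by ${\bold{G}}^{-1}$. In picture notation, $\hat{\bold{1}}(z)$ is itself the genus-zero quasimap count with one absolute marked point and one relative point, so inserting ${\bold{G}}^{-1}\hat{\bold{1}}(z)$ at the relative point $p_2$ of the three-pointed space amounts, by the degeneration/gluing formula, to gluing that one-pointed curve onto the node at $p_2$. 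The combined picture is then a chain of two $\mathbb{P}^1$'s: the first carrying relative points $p_1$ and (the glued) node, the second carrying the glued node, and the original relative points $p_2', p_3$ — but the absolute marked point "$1$" on the glued component carries no insertion at all.

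First I would set up the picture-notation computation precisely: draw $\hat{\bold{1}}(z)\circledast$ as the three-holed $\mathbb{P}^1$ with ${\bold{G}}^{-1}\hat{\bold{1}}(z)$ at the middle relative point, then apply the gluing formula in reverse to replace ${\bold{G}}^{-1}\hat{\bold{1}}(z)$ at a relative point by the actual degree-$d'$ one-marked-point relative geometry glued there, absorbing the two ${\bold{G}}^{-1}$ factors correctly. Next I would use flatness of the family $\qm(\mathcal{C}_\varepsilon \to X)$ — exactly the "deformation of the base curve" move labelled $\bold{3}$ in the associativity proof — to degenerate back: the glued nodal curve is deformation-equivalent to a single $\mathbb{P}^1$. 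On that single $\mathbb{P}^1$ we are left with two relative points $p_1, p_3$ and \emph{no} extra marked point or insertion in the middle. Then I would invoke the fact, already noted in the excerpt, that the degree-zero quasimap moduli space is just $\N_{k,n}$ itself, together with the standard "fusion = identity" statement for the two-pointed genus-zero space: the two-relative-point count with the symmetrized virtual sheaf, viewed through the bilinear form $(\cdot,\cdot)$ and corrected by ${\bold{G}}^{-1}$, is precisely the identity operator on $K_{\bT}(\N_{k,n})[[z]]$. Tracing $\alpha$ through this chain of equalities yields $\hat{\bold{1}}(z)\circledast \alpha = \alpha$.

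Concretely, the key steps in order are: (i) expand $\hat{\bold{1}}(z)\circledast\alpha$ via \eqref{qprod} and the picture for $\hat{\bold{1}}(z)$; (ii) apply gluing in reverse at $p_2$ to merge the two quasimap geometries, keeping careful track of the ${\bold{G}}^{-1}$ bookkeeping so the gluing operators cancel exactly as in equalities $\bold{1}$ and $\bold{5}$ of the associativity proof; (iii) apply the degeneration formula / base-curve deformation to collapse the resulting two-component nodal curve to a single $\mathbb{P}^1$ carrying only $p_1$ and $p_3$ as relative points; (iv) identify the resulting two-pointed relative quasimap count with insertion $\alpha$ as $\alpha$ itself, using that in degree $0$ the moduli space is $\N_{k,n}$ and that higher-degree contributions reorganize, via one more gluing/degeneration, into the same two-pointed geometry — i.e. the two-pointed series is idempotent and hence the identity. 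The main obstacle I expect is step (ii)–(iii): making the gluing-operator bookkeeping airtight so that the three insertions of ${\bold{G}}^{-1}$ (one from $\hat{\bold{1}}(z)$'s own definition is absent, but the two explicit ones in \eqref{qprod} plus the one produced when gluing onto $p_2$) cancel against the gluing operators produced by the degeneration formula, leaving a clean identity operator; this is exactly the kind of delicate cancellation that makes the quantum $K$-theory identity element differ from $\O_{\N_{k,n}}$, and it is where the symmetrized virtual structure sheaf \eqref{virdef} and the adjusted bilinear form \eqref{bf} must be used in a compatible way. The curve-deformation invariance (flatness) does the real work of turning "one extra unmarked $\mathbb{P}^1$ component" into "nothing", and verifying that no virtual class discrepancy is introduced at the node under this degeneration is the crux.
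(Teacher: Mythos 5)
Your proposal is correct and is essentially the paper's own argument run in reverse: the paper starts from the identity operator $\bold{Tube}\cdot\bold{G}^{-1}$ (using $\bold{Tube}=\bold{G}$ from \cite{lectok}), inserts a trivial absolute marked point, and degenerates/glues to recognize quantum multiplication by $\hat{\bold{1}}(z)$, while you unglue $\hat{\bold{1}}(z)\circledast\alpha$ and collapse back to the two-relative-point tube. The only caution is that your parenthetical ``idempotent and hence the identity'' is not a valid inference on its own; the load-bearing fact is precisely $\bold{Tube}=\bold{G}$, which you do also invoke correctly as the ``fusion = identity'' statement.
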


\begin{proof}
	We start from the identity $\bold{Id}=\bold{G}\, \cdot \bold{G}^{-1}$. Thanks to the Theorem \ref{gthm}, we can represent  it via the following diagram: 
	\vspace{0.1in}
	\begin{center}
		\begin{tikzpicture}
		\node at (-0.6,0) {$\bold{Id}=$};
		\draw [ultra thick] (0,0) -- (2,0);
		\draw [ultra thick] (0.1,0.2) to [out=210,in=150] (0.1,-0.2);
		\draw [ultra thick] (1.9,0.2) to [out=330,in=30] (1.9,-0.2);
		\node at (2.45,-0) {${\bold{G}}^{-1}$};
		\node at (2.45+0.7,-0) {$=$};
		\draw [ultra thick] (0+2.9+0.7,0) -- (2+2.9+0.7,0);
		\draw [ultra thick] (0.1+2.9+0.7,0.2) to [out=210,in=150] (0.1+2.9+0.7,-0.2);
		\draw [ultra thick] (1.9+2.9+0.7,0.2) to [out=330,in=30] (1.9+2.9+0.7,-0.2);
		\draw [fill] (1+2.9+0.7,0) circle [radius=0.1];
		\node at (1+2.9+0.7,-0.3) {$1$};
		\node at (2.45+3.6,-0) {${\bold{G}}^{-1}$};
		\node at (2.45+0.7+3.6,-0) {$=$};
		\draw [ultra thick] (0+5.75+0.5,0-1) -- (2+5.75+0.5,0-1);
		\draw [ultra thick] (1+5.75+0.5,0.2-1) -- (1+5.75+0.5,-1-1);
		\draw [ultra thick] (0.1+5.75+0.5,0.2-1) to [out=210,in=150] (0.1+5.75+0.5,-0.2-1);
		\draw [ultra thick] (1.9+5.75+0.5,0.2-1) to [out=330,in=30] (1.9+5.75+0.5,-0.2-1);
		\draw [fill] (1+5.75+0.5,-1-1) circle [radius=0.1];
		\node at (1+5.75+0.5,-1.3-1) {$1$};
		\node at (2+5.75+0.5+0.45,0-1) {${\bold{G}}^{-1}$};
		\node at (9.4,-1.65) {$=$};
		\draw [ultra thick] (0+9.85,0-1) -- (2+9.85,0-1);
		\draw [ultra thick] (1+9.85,-0.7-1) -- (1+9.85,-1.7-1);
		\draw [fill] (1+9.85,-1.7-1) circle [radius=0.1];
		\node at (1+9.85,-0.4-1) {$\bold{G}^{-1}$};
		\draw [ultra thick] (0.8+9.85,-0.8-1) to [out=60,in=120] (1.2+9.85,-0.8-1);
		\draw [ultra thick] (0.1+9.85,0.2-1) to [out=210,in=150] (0.1+9.85,-0.2-1);
		\draw [ultra thick] (1.9+9.85,0.2-1) to [out=330,in=30] (1.9+9.85,-0.2-1);
		\draw [ultra thick] (0.9+9.85,0.2-1) to [out=330,in=30] (0.9+9.85,-0.2-1);
		\node at (1+9.85,-2-1) {$1$};
		\node at (12.3,-1) {$\bold{G}^{-1}$};
		\end{tikzpicture}
	\end{center}
	\vspace{0.1in}
	The second equality is the degeneration of the parametrized domain $\mathcal{D}$ in to a nodal curve given by  union of $\mathcal{D}$ with rational curve $\mathbb{P}^1$. 
	This domain is represented by the third term in the above equalities. The last equality is the gluing formula (\ref{glform}). Comparing with the definition (\ref{qmuldef}) we see the last diagram is the operator of quantum multiplication by the class $\hat{\bold{1}}(z)$.
\end{proof}

\subsection{Quantum tautological line bundle}
Let $\widehat{\O(1)}(z)$ be the quantum tautological bundle from  Definition \ref{qtbdef} for $\tau=\O(1)=\det\tb$:
As an example of tautological class $\tau$, one can consider the line bundle over $\N_{k,n}$ given by $\tau=\mathcal{O}(1)=\det \tb$:
\be \label{qtlb}
\widehat{\O(1)}(z):=\sum\limits_{d=0}^{\infty} z^d  {\rm{ev}}_{p_2, *}\Big(\qm^d_{{\rm{relative}} \, p_2},\vss^d \otimes \det (\left.\qmV\right|_{p_1}) \Big) \in QK_{\bT}(\N_{k,n}).
\ee
The following Theorem relates it to the operator of quantum difference equation $\M(z)$ defined by (\ref{qdeop}): 

\begin{Theorem}
	\label{Mth}
	Under the specialization $q=1$   the operator $\M(z)$ coincides with the operator of quantum multiplication by the quantum line bundle:
	$$
	\M(z)|_{q=1}=\widehat{\O(1)}(z) \circledast.
	$$
\end{Theorem}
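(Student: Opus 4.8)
The plan is to realize both operators as one and the same capped quasimap count --- a vertex carrying the descendent $\det\tb$ together with two relative points --- and then to reconcile the two presentations after setting $q=1$. For the right-hand side, I would start from the definition (\ref{qprod}) of the quantum product and the definition (\ref{tautb}) of a quantum tautological class, glue the relative point defining $\hat{\tau}(z)$ onto the insertion point $p_2$ of (\ref{qprod}), and then smooth the resulting node --- the same surgery used in the associativity proof above, namely the gluing formula followed by the degeneration formula. This should produce, for any tautological $\tau$,
$$
\hat{\tau}(z)\ \circledast\ =\ \sum_{d\ge 0}z^{d}\ \mathrm{ev}_{p_1,p_3,\ast}\!\Big(\qm^{d}_{\mathrm{relative}\,p_1,\,p_3},\ \vss\,\tau(\qmV|_{p_2})\Big)\bold{G}^{-1},
$$
where $p_2$ is a non-special interior point of $\mathbb{P}^{1}$: quantum multiplication by $\hat{\tau}(z)$ is the capped descendent vertex with two relative points.

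Next I would simplify $\M(z)$. Since $p_1$ is a relative point, the destabilization curve does not bubble over $p_1$, so $\pi$ is an isomorphism there and $\pi^{\ast}(\O_{p_1})$ is a skyscraper; hence $H^{\bullet}(\qmV\otimes\pi^{\ast}(\O_{p_1}))=\qmV|_{p_1}$ and $\det H^{\bullet}(\qmV\otimes\pi^{\ast}(\O_{p_1}))=\det(\qmV|_{p_1})$, which is exactly the descendent attached to $\tau=\det\tb=\O(1)$; thus $\M(z)=\sum_{d\ge0}z^{d}\,\mathrm{ev}_{p_1,p_2,\ast}\big(\qm^{d}_{\mathrm{relative}\,p_1,\,p_2},\ \vss\,\det(\qmV|_{p_1})\big)\bold{G}^{-1}$. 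Now the two expressions have the same shape and differ only in where the $\det\tb$-descendent sits --- at the relative point $p_1$ for $\M(z)$, at a non-special point for $\widehat{\O(1)}(z)\circledast$ --- and in that $\M(z)$ still carries the torus $\C^{\times}_q$. To reconcile them I would move the descendent: $\det\qmV$ is a line bundle of degree $d$ on a degree-$d$ quasimap, so the $\C^{\times}_q$-weights of its fibre at the two $\C^{\times}_q$-fixed points of $\mathbb{P}^{1}$ differ by $q^{\pm d}$, while a further degeneration pushing the descendent point onto a non-rigid bubble again alters the degree-$d$ term only by a power of $q$ proportional to $d$ (plus a compensating rescaling $z\mapsto q^{\mp1}z$). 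Every such $q$-factor is trivial at $q=1$, which gives $\M(z)|_{q=1}=\widehat{\O(1)}(z)\circledast$.

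As an independent check I would verify the action on the quantum identity directly. Combining the factorization $\hat{V}^{(\tau)}(z)=\Psi(z)V^{(\tau)}(z)$, the difference equation (\ref{difference}) rewritten as $\Psi(z)\O(1)=\M(q^{-1}z)\Psi(q^{-1}z)$, and the elementary identity $V^{(\det\tb)}(z)=\O(1)\,V^{(1)}(q^{-1}z)$ --- which holds because on a degree-$d$ $\bG$-fixed locus of the bare vertex $\det(\qmV|_{p_1})=q^{-d}\,\O(1)$ --- one obtains $\hat{V}^{(\det\tb)}(z)=\M(q^{-1}z)\,\hat{V}^{(1)}(q^{-1}z)=\M(q^{-1}z)\,\hat{\bold{1}}(q^{-1}z)$, and letting $q\to1$ yields $\widehat{\O(1)}(z)=\M(z)|_{q=1}\big(\hat{\bold{1}}(z)\big)$, consistently with the formula above. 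Since $\M(z)$ commutes with every operator of quantum multiplication (\cite{os}), this already forces $\M(z)|_{q=1}=\widehat{\O(1)}(z)\circledast$ on its own.

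The hard part will be the very first step: showing that quantum multiplication by $\hat{\tau}(z)$ genuinely has the two-relative-point descendent form requires careful bookkeeping of the operators $\bold{G}^{\pm1}$ and of the symmetrizing twist $q^{\deg(\qmP)/2}$ in $\vss$ through the gluing and degeneration formulas --- in particular, that the $\bold{G}^{-1}$ in (\ref{qprod}) exactly undoes the relative structure of $\hat{\tau}(z)$ at $p_2$, leaving a genuine interior point. The second delicate point is the $q$-power accounting: one must check that relocating the $\det\tb$-descendent from the relative point $p_1$ changes the degree-$d$ contribution by \emph{exactly} a power of $q$ proportional to $d$, with no residual contributions from the non-rigid components, so that the operators agree identically at $q=1$ and not merely up to corrections. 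Finally, since every $\hat{V}^{(\tau)}(z)$ and $\M(z)$ is a Laurent polynomial in $q$, the limits $q\to1$ are harmless degree by degree in $z$; one should nonetheless establish the identities above at the level of formal $z$-series first, so that the rescaling $z\mapsto q^{-1}z$ causes no difficulty.
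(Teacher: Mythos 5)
Your overall strategy is the same as the paper's: realize $\M(z)$ as a two--relative--point quasimap count with a $\det\tb$--descendent, move that descendent to an interior point, and recognize the result as quantum multiplication by $\widehat{\O(1)}(z)$. But the step where you simplify $\M(z)$ contains a concrete error that reverses the actual geometry. You write that ``since $p_1$ is a relative point, the destabilization curve does not bubble over $p_1$, so $\pi$ is an isomorphism there'' and conclude $\det H^{\bullet}(\qmV\otimes\pi^{*}(\O_{p_1}))=\det(\qmV|_{p_1})$. This is exactly backwards: in the relative moduli space the base curve bubbles \emph{precisely} at the relative points (and at nodes) --- that is the whole point of the relative compactification --- so over $p_1$ the map $\pi$ contracts a chain of non-rigid rational curves, $\pi^{*}(\O_{p_1})$ is supported on that entire chain, and $H^{\bullet}(\qmV\otimes\pi^{*}(\O_{p_1}))$ picks up Euler--characteristic contributions from the bubbles that are genuine $K$-theory classes on the boundary strata, not mere powers of $q^{d}$. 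Your subsequent bookkeeping, which compares fibers of $\det\qmV$ at the two $\C^{\times}_q$-fixed points, therefore does not account for these boundary contributions. The paper's proof avoids this by doing the two moves in the opposite order: first it replaces $\pi^{*}(\O_{p_1})$ by $\pi^{*}(\O_{p_3})$ for an interior $p_3$ --- legitimate only at $q=1$, because non-equivariantly $[\O_{p_1}]=[\O_{p_3}]$ in $K(\mathbb{P}^1)$ --- and only \emph{then} uses that $\pi$ is an isomorphism over the non-relative point $p_3$ to collapse $\det H^{\bullet}$ to $\det\qmV|_{p_3}$. Your first step (identifying quantum multiplication by $\hat\tau(z)$ with the two--relative--point descendent vertex via gluing and degeneration) is correct and is exactly what the paper's pictorial identification asserts.

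Your ``independent check'' is actually closer to a complete alternative proof than the main argument: the localization identity $V^{(\det\tb)}(z)=\O(1)\,V^{(1)}(q^{-1}z)$ is correct, and combined with $\hat V^{(\tau)}=\Psi V^{(\tau)}$ and the difference equation it cleanly gives $\M(z)|_{q=1}\hat{\bold 1}(z)=\widehat{\O(1)}(z)$. To promote this to the operator identity you still need two facts you assert without proof: that $\M(z)|_{q=1}$ commutes with all operators of quantum multiplication, and that $\hat{\bold 1}(z)$ is a cyclic vector for the quantum multiplication algebra (equivalently, that the classes $\hat\tau(z)=\hat\tau(z)\circledast\hat{\bold 1}(z)$ span $K_{\bT}(\N_{k,n})$). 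Both are true here --- the second because the eigenvectors $\psi_{\fp}(z)$ appear in $\hat{\bold 1}(z)$ with nonvanishing coefficients $v_{\fp}(z)$ --- but they need to be stated, since without cyclicity the equation $\M(z)|_{q=1}\hat{\bold 1}=\widehat{\O(1)}(z)$ alone does not determine the operator.
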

\begin{proof}
	Let us consider the operator $\M(z)$ defined by (\ref{qdeop}).
	Here the moduli space in question is the stack of quasimaps with parametrized domain ${\mathcal{D}}={\mathbb{P}}^1$, relative to the points $p_1$ and $p_2$. The bilinear form makes it an operator on $K_{\bG}(\N_{k,n})$ same way it did in the case of quantum multiplication.
The specialization $q=1$ corresponds to the non ${\mathbb{C}}^{\times}_q$-equivariant case. In this case any two points on ${\mathcal{D}}$ are isomorphic, so we can replace the sheaf $\pi^{\ast}(\mathcal{O}_{p_1})$ with $\pi^{\ast}(\mathcal{O}_{p_3})$, where $p_3$ is any other point on  ${\mathcal{D}}$. After such a replacement, $\det H^{\bullet}\left(\qmV\otimes \pi^{\ast}(\mathcal{O}_{p_1})\right)$ becomes $\det \qmV|_{p_3}$, since the point $p_3$ is not a relative point and, therefore, the map $\pi$ is an isomorphism over this point. In the diagrammatic notation this operator is
	\vspace{0.1in}
	\begin{center}
		\begin{tikzpicture}
		\draw [ultra thick] (0,0) -- (2,0);
		\draw [ultra thick] (0.1,0.2) to [out=210,in=150] (0.1,-0.2);
		\draw [ultra thick] (1.9,0.2) to [out=330,in=30] (1.9,-0.2);
		\node at (2.5,-0) {${\bold{G}}^{-1}$};
		\draw [fill] (1,0) circle [radius=0.1];
		\node at (1,-0.4) {$\det\tb$};
		\end{tikzpicture}
	\end{center}
where  $\det \tb =\O(1)$ denotes the determinant of tautological bundle over $\N_{k,n}$. Next, we use the gluing formula to represent this operator in the following form:

\vspace{0.1in}
\begin{center}
	\begin{tikzpicture}%
	\draw [ultra thick] (0+2.9+0.7,0) -- (2+2.9+0.7,0);
	\draw [ultra thick] (0.1+2.9+0.7,0.2) to [out=210,in=150] (0.1+2.9+0.7,-0.2);
	\draw [ultra thick] (1.9+2.9+0.7,0.2) to [out=330,in=30] (1.9+2.9+0.7,-0.2);
	\draw [fill] (1+2.9+0.7,0) circle [radius=0.1];
	\node at (1+2.9+0.7,-0.3) {$\O(1)$};
	\node at (2.45+3.6,-0) {${\bold{G}}^{-1}$};
	\node at (2.45+0.7+3.6,-0) {$=$};
	\draw [ultra thick] (0+5.75+0.5,0-1) -- (2+5.75+0.5,0-1);
	\draw [ultra thick] (1+5.75+0.5,0.2-1) -- (1+5.75+0.5,-1-1);
	\draw [ultra thick] (0.1+5.75+0.5,0.2-1) to [out=210,in=150] (0.1+5.75+0.5,-0.2-1);
	\draw [ultra thick] (1.9+5.75+0.5,0.2-1) to [out=330,in=30] (1.9+5.75+0.5,-0.2-1);
	\draw [fill] (1+5.75+0.5,-1-1) circle [radius=0.1];
	\node at (1+5.75+0.5,-1.3-1) {$\O(1)$};
	\node at (2+5.75+0.5+0.45,0-1) {${\bold{G}}^{-1}$};
	\node at (9.4,-1.65) {$=$};
	\draw [ultra thick] (0+9.85,0-1) -- (2+9.85,0-1);
	\draw [ultra thick] (1+9.85,-0.7-1) -- (1+9.85,-1.7-1);
	\draw [fill] (1+9.85,-1.7-1) circle [radius=0.1];
	\node at (1+9.85,-0.4-1) {$\bold{G}^{-1}$};
	\draw [ultra thick] (0.8+9.85,-0.8-1) to [out=60,in=120] (1.2+9.85,-0.8-1);
	\draw [ultra thick] (0.1+9.85,0.2-1) to [out=210,in=150] (0.1+9.85,-0.2-1);
	\draw [ultra thick] (1.9+9.85,0.2-1) to [out=330,in=30] (1.9+9.85,-0.2-1);
	\draw [ultra thick] (0.9+9.85,0.2-1) to [out=330,in=30] (0.9+9.85,-0.2-1);
	\node at (1+9.85,-2-1) {$\O(1)$};
	\node at (12.3,-1) {$\bold{G}^{-1}$};
	\end{tikzpicture}
\end{center}
\vspace{0.1in}
Thus, in accordance with our definition (\ref{qmuldef})
	the last diagram represents the operator of quantum multiplication by
	\vspace{0.1in}
	\begin{center}
		\begin{tikzpicture}
		\draw [ultra thick] (0,0) -- (2,0);
		\draw [fill] (2,0) circle [radius=0.1];
		\draw [ultra thick] (0.1,0.2) to [out=210,in=150] (0.1,-0.2);
		\node at (2.6,0) {$ \ \ \O(1)$};
		\end{tikzpicture}
	\end{center}
	\vspace{0.1in}
	
\noindent
i.e. by the quantum line bundle $\widehat{\O(1)}(z)$. 
\end{proof}
\section{Eigenvalues and eigenvectors of quantum multiplication}

\subsection{Eigenvectors of quantum multiplication}

The localized $K$-theory $K_{\bT}(\N_{k,n})_{loc}$ is a vector space over the field ${\mathcal{A}}$ with a natural basis given by the $K$-theory classes of fixed points ${\mathcal{O}}_{\fp}$ (where $\fp=\{i_1,\cdots,i_k\}\subset \{1,\cdots,n\}$ is a $k$-subset labeling the elements of $\N_{k,n}^{\bT}$),
see Section \ref{definit} for notations. The operators of classical multiplication by $\mathcal{F}\in K_{\bT}(\N_{k,n})$ act on this vector space as ${\mathcal{A}}$-linear operators, i.e. $\mathcal{F}\in End_{{\mathcal{A}}}(K_{\bT}(\N_{k,n})_{loc})$. These operators are diagonal in the basis of fixed points ${\mathcal{O}}_{\fp}$:
$$
\mathcal{F}\otimes {\mathcal{O}}_{\fp}=\left.\mathcal{F}\right|_{\fp} {\mathcal{O}}_{\fp}
$$ 
where $\left.\mathcal{F}\right|_{\fp} \in {{\mathcal{A}}}$ denotes the restriction of K-theory class to the corresponding fixed point $\fp$. 

The operators of quantum multiplication by quantum (or classical) $K$-theory classes are defined as power series 
$\mathcal{F}\circledast \in End_{{{\mathcal{A}}}}(K_{\bT}(\N_{k,n})_{loc})[[z]]$ with the zero degree terms given by classical multiplication:
$$
\mathcal{F}\circledast=\mathcal{F}\otimes + O(z)
$$
Thus, we may look for the classes $\psi_{\fp}(z) \in K_{\bT}(\N_{k,n})[[z]]$ and power series $v_{\fp}(z)\in{{\mathcal{A}}}[[z]]$ with fixed degree zero terms:
\be \label{norpow}
\psi_{\fp}(z)={\mathcal{O}}_{\fp}+O(z), \ \ \ v_{\fp}(z)=\left.\mathcal{F}\right|_{\fp} +O(z)
\ee 
such that the following formal equalities of power series hold: 
$$
\mathcal{F}\circledast \psi_{\fp}(z)= v_{\fp}(z) \psi_{\fp}(z).
$$
We will call the power series $v_{\fp}(z)$ the  eigenvalues 
of quantum multiplication by $\mathcal{F}$. 
The power series $\psi_{\fp}(z)$ normalized as in (\ref{norpow}) do not depend on $\mathcal{F}$, because the quantum multiplication is commutative. Thus, we will refer to $\psi_{\fp}(z)$ as eigenvectors of quantum multiplication.

The goal of this section is to describe $\psi_{\fp}(z)$ and to obtain explicit formulas for $v_{\fp}(z)$. In particular, we will show that the eigenvalues of the operators of quantum multiplication by the quantum tautological classes are given by the symmetric polynomials of the roots of Bethe equations. Thus, this operators have exactly the same eigenvalues as the nonlocal Hamiltonians of the XXZ spin chain.

\subsection{The capping operator and the bare vertex in the limit $q\to 1$}

Let us recall that the capping operator $\Psi(z)$ is the rank two tensor defined by (\ref{capping}). It is an element of $\bG$-localized K-theory due to 
nonsingular conditions at one of the fixed points. In particular, it becomes singular at $q=1$. In this section we describe the asymptotic behavior of the capping operator in the limit $q\to 1$.

The operator of classical multiplication by the line bundle $\O(1)$ in the basis of fixed points is represented by the diagonal matrix with the eigenvalue $\lambda^{0}_\fp=a_{i_1}\cdots a_{i_k}$ corresponding to a fixed point $\fp=\{i_1,\cdots,i_k\}$. The quantum line bundle (\ref{qtlb})  by definition is a $z$-deformation of this class:
$$
\widehat{\O(1)}(z)=\O(1)+\dots,
$$
where dots stand for terms vanishing in $z\to 0$ limit.  Thus, the eigenvalues of the quantum line bundle $\widehat{\O(1)}(z)$ have the form $\lambda_\fp(z)=\lambda^{0}_\fp+\lambda^{1}_\fp z +\cdots$.
Let us denote 
$$l_\fp(z)\equiv\lambda_\fp(z)/\lambda^{0}_{\fp}=1+l^{1}_{\fp} z+ \cdots$$ the normalized eigenvalues $l_\fp(z)$ of the operator of the quantum multiplication by the quantum line bundle $\widehat{\O(1)}(z)$.

Let us consider the $q$-difference equation (\ref{difference}) in the basis of \textit{fixed points}.
By definition, the capping operator is a $K_{\bT}(\N_{k,n})^{\otimes 2}$ -valued power series of the form:
$$
\Psi(0)=1+\Psi_1 z +\Psi_2 z^2 +\dots,
$$
i.e. the first term is the identity matrix.  At $z=0$ the equation  (\ref{difference}) holds trivially because
$\M(0)=\O(1)$. The higher terms $\Psi_i$ for $i>1$ are fixed by (\ref{difference}) and can be computed explicitly by solving corresponding linear problem
using known operator $\M(z)$. The following result describes $q\to 1$ behavior of the fundamental  solution matrix $\Psi(z)$.
\begin{Proposition}
	Let $\Psi_{{\fp}}(z)$ be the $\fp$-th column of the matrix $\Psi(z)$ in the basis of fixed points.
	In the limit $q\mapsto 1$ the capping operator has the following asymptotic behavior:
	\be
	\label{solution}
	\Psi_{\it{\fp}}(z) =\exp \Big({\frac{1}{q-1}\int d_q z \ln (l_\fp(z)) } \Big)\Big(\psi_\fp(z) + \cdots\Big),
	\ee
	where $\psi_{\fp}(z)$ are the column eigenvectors of the operators of quantum multiplication corresponding to the fixed point $\fp$ and dots stand for the terms vanishing in the limit $q\to 1$.
	\footnote{Let ${(k)}_q:=\frac{q^k-1}{q-1}$ stand for the ``$q$-number'' for a formal parameter $q\in \C^{\times}$ with $|q|<1$.
		Recall that the Jackson $q$-integral is the formal sum:
		$$
		\int d_q z f(z)=(1-q) \sum\limits_{n=0}^{\infty} f(z q^n).
		$$
		The Jackson integral is well defined as an operation on power series without constant term:
		$$
		\int d_q z: \sum\limits_{k=1}^{\infty}a_k z^k \mapsto \sum\limits_{k=1}^{\infty} \frac{a_k}{{(k)}_q}z^k.
		$$
		It is clear from this definition that the $q$-integral solves corresponding $q$-difference equation:
		$$
		F(z)=\int d_q z f(z) \ \ \ \Rightarrow \ \ \ \frac{F(qz)-F(z)}{q-1} = f(z),
		$$
		which can be also written in the following exponential form:
		$$
		F(z)=\exp \Big(\frac{1}{q-1} \int d_q z \ln \lambda(z) \Big)  \ \ \  \Rightarrow \ \ \ F(z q) =\lambda(z) F(z).
		$$
	}
\end{Proposition}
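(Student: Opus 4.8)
The plan is to treat the quantum difference equation (\ref{difference}) as an adiabatic (WKB-type) problem in the small parameter $q-1$, whose ``slow'' limit $q\to 1$ is governed by Theorem \ref{Mth}. First, since $\O(1)$ is diagonal in the fixed-point basis with eigenvalues $\lambda^0_\fp$, reading (\ref{difference}) column by column decouples it: the $\fp$-th column $\Psi_\fp(z)$ satisfies
\[
\Psi_\fp(qz)=(\lambda^0_\fp)^{-1}\,\M(z)\,\Psi_\fp(z),
\]
which is consistent with the normalization $\Psi(0)=1$ because $\M(0)=\O(1)$. I would then substitute the ansatz $\Psi_\fp(z)=F_\fp(z)\,v_\fp(z,q)$, where $F_\fp(z)=\exp\!\big(\tfrac{1}{q-1}\int d_q z\,\ln l_\fp(z)\big)$ is the scalar $q$-exponential of the footnote (so that $F_\fp(qz)=l_\fp(z)\,F_\fp(z)$ and $F_\fp(0)=1$), and $v_\fp(z,q)=\psi_\fp(z)+(q-1)\,v^{(1)}_\fp(z)+(q-1)^2 v^{(2)}_\fp(z)+\cdots$ is required to stay regular at $q=1$. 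Cancelling the common scalar $F_\fp(z)$ turns the difference equation into
\[
l_\fp(z)\,v_\fp(qz,q)=(\lambda^0_\fp)^{-1}\,\M(z)\,v_\fp(z,q).
\]

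Next I would expand both sides in powers of $q-1$, using $v_\fp(qz,q)=v_\fp(z,q)+(q-1)\,z\,\partial_z v_\fp(z,q)+\cdots$, analogous expansions $l_\fp(z)=l^{(0)}_\fp(z)+(q-1)\,l^{(1)}_\fp(z)+\cdots$ and $\M(z)=\widehat{\O(1)}(z)+(q-1)\,\M^{(1)}(z)+\cdots$, the latter being exactly Theorem \ref{Mth}. At order $(q-1)^0$ one obtains $l^{(0)}_\fp(z)\,\psi_\fp(z)=(\lambda^0_\fp)^{-1}\,\widehat{\O(1)}(z)\,\psi_\fp(z)$, so $\psi_\fp(z)$ is an eigenvector of $\widehat{\O(1)}(z)$ with eigenvalue $\lambda^0_\fp\,l^{(0)}_\fp(z)$; comparison with the expansion $\lambda_\fp(z)=\lambda^0_\fp+\lambda^1_\fp z+\cdots$ forces $l^{(0)}_\fp=l_\fp$, the normalized eigenvalue of (\ref{solution}). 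Since the operators of quantum multiplication all commute and, for generic equivariant parameters, have simple joint spectrum — the eigenvalues of $\widehat{\O(1)}(z)=\widehat{\det\tb}(z)$ are the values of $\prod_i s_i$ on the pairwise distinct solutions of (\ref{beth}) by Theorem \ref{thbt}, hence pairwise distinct — the vector $\psi_\fp(z)$ is the common eigenvector of all quantum multiplication operators, normalized by $\psi_\fp(0)=e_\fp$ so that $\Psi(0)=1$. At order $(q-1)^k$ one gets $\big(\widehat{\O(1)}(z)-\lambda_\fp(z)\big)v^{(k)}_\fp(z)=(\text{data from orders}<k)$; pairing with the left eigenvector fixes $l^{(k)}_\fp(z)$ by the Fredholm alternative, and invertibility of $\widehat{\O(1)}(z)-\lambda_\fp(z)$ on the span of the other eigenvectors yields $v^{(k)}_\fp(z)$. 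This establishes (\ref{solution}) to all orders in $q-1$.

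Finally, one must upgrade this formal statement to the genuine $q\to 1$ asymptotics of the fundamental solution, i.e. show that after dividing by $F_\fp(z)$ the column $\Psi_\fp(z)$ really has a finite limit equal to $\psi_\fp(z)$, with no leftover divergence. I would do this by producing, for generic equivariant parameters, a $q$-regular frame diagonalizing $\M(z)$ whose diagonal entries deform the simple eigenvalues $\lambda_\fp(z)$; in that frame (\ref{difference}) splits into $2^n$ scalar $q$-difference equations, each solved uniquely — given $\Psi(0)=1$ — by the corresponding $q$-exponential, and undoing the frame change reproduces (\ref{solution}). The main obstacle is precisely this last step: justifying the adiabatic ansatz beyond formal power series, and controlling the $q$-regularity and convergence of the diagonalizing frame. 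This is the difference-equation counterpart of the WKB theorem for $\varepsilon y'=A(x)y$, and it is here that one leans on the rationality of the capped vertex proved above (or, alternatively, on the explicit form of $\M(z)$ from Section 7 together with the structural results of \cite{os} and \cite{lectok}).
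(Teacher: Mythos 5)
Your proposal is correct and follows essentially the same route as the paper: substitute the $q$-exponential (WKB) ansatz into the difference equation (\ref{difference}) and use Theorem \ref{Mth} to identify the leading term, which forces $\psi_\fp(z)$ to be an eigenvector of $\widehat{\O(1)}(z)$ with eigenvalue $\lambda_\fp(z)$ — the paper's own proof is exactly the zeroth order of your expansion, stated without the higher-order bookkeeping or the analytic caveats you raise. One small caution: do not justify simplicity of the spectrum of $\widehat{\O(1)}(z)$ by citing Theorem \ref{thbt}, since the paper derives that theorem downstream of this very Proposition; instead argue it from the classical limit $z=0$, where the eigenvalues $\lambda^0_\fp=\prod_{i\in\fp}a_i$ are generically pairwise distinct, and extend by genericity in $z$.
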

\begin{proof}
	By Theorem \ref{Mth} at $q=1$ the operator 	$\M(z)$ coincides with the operator of quantum multiplication by a tautological line bundle. We denote by $\psi_{\fp}(z)$ the eigenbasis of this operator.


The difference equation in the fixed point basis can be rewritten in the following way, denoting $q=e^h$:
\begin{eqnarray}
\lambda^0_{\mathbf{p}}\Psi_{\fp}(e^hz)=\mathbf{M}(z)\Psi_{\fp}(z), \quad  	\mathbf{M}(z)=\widehat{\mathcal{O}}(1)(z)+\dots
\end{eqnarray}	
where dots stand for $O(h)$ when $h\to 0$. The vector solutions to the difference equation above are the deformations of the formal solutions to the difference equation 	
\begin{eqnarray}\nonumber 
\lambda^0_{\mathbf{p}}\Phi_{\fp}(e^hz)=\widehat{\mathcal{O}}(1)(z)\Phi_{\fp}(z)
\end{eqnarray}	
which have the form
$$
\Phi_p=\exp \Big({\frac{1}{q-1}\int d_q z \ln (l_\fp(z)) } \Big)\psi_\fp(z), 
 $$
 so that $\Psi_p=\exp \Big({\frac{1}{q-1}\int d_q z \ln (l_\fp(z)) } \Big)(\psi_\fp(z)+\dots)$, where $\dots$ stand for $O(h)$. 
\end{proof}
\begin{Corollary} \label{idcorr}
	In the limit $q \to 1$ the coefficients of bare vertex in the basis of fixed points have the following asymptotic expansion:
	$$
	V^{(1)}_{\fp}(z)=(v_\fp(z) +\cdots) \exp \Big(-{\frac{1}{q-1}\int d_q z \ln (l_\fp(z)) } \Big),\\
	$$
	where dots  stand for the terms vanishing at $q\to 1$ and $v_k(z)$ are the coefficients of multiplicative identity element of the quantum $K$-theory ring in the basis of eigenvectors of quantum multiplication:
	\be
	\label{expansion}
	\hat{\bold{1}}(z) = \sum\limits_{\fp} v_{\fp}(z) \psi_{\fp}(z).
	\ee
\end{Corollary}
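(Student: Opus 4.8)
The plan is to read off the asymptotics of $V^{(1)}(z)$ from the factorization \eqref{verrel}. For $\tau=1$ that identity reads $\hat V^{(1)}(z)=\Psi(z)\,V^{(1)}(z)$, hence $V^{(1)}(z)=\Psi(z)^{-1}\hat V^{(1)}(z)$, and I would extract the $q\to1$ behaviour of the right-hand side from three facts already established: the $q\to1$ shape of the columns of $\Psi(z)$ given by the previous Proposition; the regularity of $\hat V^{(1)}(z)$ at $q=1$ together with $\lim_{q\to1}\hat V^{(1)}(z)=\hat{\bold{1}}(z)$ (this is precisely how the quantum tautological class was defined from the capped vertex with $\tau=1$); and the expansion $\hat{\bold{1}}(z)=\sum_{\fp}v_{\fp}(z)\,\psi_{\fp}(z)$ of \eqref{expansion}.

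First I would repackage the Proposition as a matrix factorization in the basis of fixed points: $\Psi(z)=\widetilde P(z)\,D(z)$, where $D(z)$ is the diagonal matrix with $\fp$-th entry $\exp\!\big(\tfrac1{q-1}\int d_q z\,\ln l_{\fp}(z)\big)$ and the $\fp$-th column of $\widetilde P(z)$ is the corresponding column of $\Psi(z)$ with that scalar stripped off, i.e. $\psi_{\fp}(z)+\cdots$ in the notation of \eqref{solution}. By the Proposition $\widetilde P(z)$ is regular at $q=1$ and $\lim_{q\to1}\widetilde P(z)=P(z)$, the matrix whose columns are the eigenvectors $\psi_{\fp}(z)$ of quantum multiplication; for generic $z$ and generic equivariant parameters the spectrum of $\widehat{\O(1)}(z)$ is simple by Theorem \ref{thbt} (distinct Bethe roots give pairwise distinct products $\prod s_i$), so $P(z)$ is invertible over the field of rational functions. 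Consequently $\Psi(z)^{-1}=D(z)^{-1}\widetilde P(z)^{-1}$, with $D(z)^{-1}$ diagonal, carrying entries $\exp\!\big(-\tfrac1{q-1}\int d_q z\,\ln l_{\fp}(z)\big)$.

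Combining, $V^{(1)}(z)=D(z)^{-1}\,\widetilde P(z)^{-1}\,\hat V^{(1)}(z)$. The point is that $\widetilde P(z)^{-1}\hat V^{(1)}(z)$ is regular at $q=1$, being a product of things regular there, and its limit is $P(z)^{-1}\hat{\bold{1}}(z)$, whose $\fp$-th coordinate is $v_{\fp}(z)$ by \eqref{expansion}. Hence $\big(\widetilde P(z)^{-1}\hat V^{(1)}(z)\big)_{\fp}=v_{\fp}(z)+\cdots$ with the tail vanishing as $q\to1$, and multiplying by the diagonal $D(z)^{-1}$ gives
\[
V^{(1)}_{\fp}(z)=\big(v_{\fp}(z)+\cdots\big)\,\exp\!\Big(-\tfrac1{q-1}\int d_q z\,\ln l_{\fp}(z)\Big),
\]
which is the assertion. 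This is also a consistency check: $V^{(1)}(z)$ is genuinely singular at $q=1$ (the non-singularity condition is only open), and the formula exhibits the entire $q=1$ singularity as carried by the Jackson-integral prefactor, mirroring $\Psi(z)$ with the opposite sign.

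The only real work is bookkeeping the error terms: one must be sure that the $q\to1$-vanishing tails in the columns of $\Psi(z)$ and in $\hat V^{(1)}(z)-\hat{\bold{1}}(z)$ are not amplified into surviving contributions when hit by $D(z)^{-1}$, whose diagonal entries blow up at $q=1$. This is handled by keeping $D(z)^{-1}$ factored out until the very last step and only ever manipulating $\widetilde P(z)^{-1}\hat V^{(1)}(z)$, which is honestly regular at $q=1$ — the exponential prefactor is never expanded. A secondary, routine point is the invertibility of $P(z)$, i.e. semisimplicity of quantum multiplication for generic $z$, for which Theorem \ref{thbt} suffices as indicated above.
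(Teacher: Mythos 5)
Your proposal is correct and uses essentially the same ingredients as the paper's proof: the relation $\hat V^{(1)}(z)=\Psi(z)V^{(1)}(z)$, the $q\to 1$ asymptotics of the columns of $\Psi(z)$, regularity of the capped vertex at $q=1$ with limit $\hat{\bold{1}}(z)$, and linear independence of the eigenvectors $\psi_{\fp}(z)$. The only difference is organizational — the paper argues that the exponential divergences must cancel and deduces the form of $V^{(1)}_{\fp}(z)$ from linear independence, whereas you make the same cancellation explicit by factoring $\Psi(z)=\widetilde P(z)D(z)$ and inverting, which is if anything a slightly more careful bookkeeping of the error terms.
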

\begin{proof}
	Recall that the multiplicative identity in the quantum $K$-theory is given by $\hat{\bold{1}}(z)=\lim\limits_{q\to 1}\hat{V}^{(1)}(z)$. Hence, we have:
	\be
	\hat{\bold{1}}(z)=\lim\limits_{q\to 1} \Psi(z) V^{(1)}(z) =\lim\limits_{q\to 1} \sum\limits_{\fp}\,
	\Psi_{\fp}(z) V^{(1)}_{\fp}(z).\nonumber
	\ee
The existence of the latter limit follows from the fact that the capped vertex is an element of the integral $K$-theory and thus is a Laurent polynomial in $q$.  This means that exponentially divergent terms from (\ref{solution}) are canceled by the corresponding contributions from $V^{(1)}_{\fp}(z)$. However, due to the linear independence of the eigenvectors $\psi_{\fp}(z)$ that is possible only if these coefficients have the form:
	$$
	V^{(1)}_{\fp}(z)=w_\fp(z) \exp \Big(-{\frac{1}{q-1}\int d_q z \ln (l_\fp(z)) } \Big).
	$$
	Taking $v_{\fp}(z)=w_{\fp}(z,q=1)$, we obtain the desired result.
\end{proof}
\subsection{Eigenvalues of the quantum multiplication by quantum tautological bundles}

Let us consider the operators of quantum multiplication by quantum tautological classes, i.e., the operators $\hat{\tau}(z)\circledast$. As operators of quantum multiplication these operators are also diagonal in the basis $\psi_{\fp}(z)$.  Let $\tau_{\fp}(z)$ denote the eigenvalue corresponding to the eigenvector $\psi_{\fp}(z)$.
\begin{Proposition} The coefficients of the bare vertex function  have the following  $q\to 1$ asymptotic
	in the fixed points basis:
	\be
	\label{bvwd}
	V^{(\tau)}_{\fp}(z)=(\tau_{\fp}(z) v_\fp(z) +\cdots) \exp \Big(-{\frac{1}{q-1}\int d_q z \ln (l_\fp(z)) } \Big),
	\ee
	where dots stand for the terms vanishing in the limit $q\to 1$ and coefficients $v_\fp(z)$ are as stated in Corollary \ref{idcorr}.
\end{Proposition}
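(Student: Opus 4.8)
The plan is to combine the relation $\hat V^{(\tau)}(z)=\Psi(z)V^{(\tau)}(z)$ from \eqref{verrel} with the asymptotics of the capping operator \eqref{solution} and the fact that the capped vertex $\hat V^{(\tau)}(z)$ is, by rationality, a Laurent polynomial in $q$ and hence has a finite $q\to 1$ limit equal to $\hat\tau(z)$. Writing everything in the fixed-point basis, $\hat V^{(\tau)}(z)=\sum_{\fp}\Psi_{\fp}(z)V^{(\tau)}_{\fp}(z)$, and substituting the known exponential prefactor of $\Psi_{\fp}(z)$, the only way the sum can converge as $q\to1$ is if each $V^{(\tau)}_{\fp}(z)$ carries the reciprocal exponential prefactor $\exp\!\big(-\tfrac{1}{q-1}\int d_qz\,\ln l_\fp(z)\big)$, exactly as in Corollary~\ref{idcorr}. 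So first I would write
\be
\hat\tau(z)=\lim_{q\to1}\Psi(z)V^{(\tau)}(z)=\lim_{q\to1}\sum_{\fp}\Psi_{\fp}(z)\,V^{(\tau)}_{\fp}(z),
\nn
\ee
and record that the left side is finite.

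Next I would argue, exactly as in the proof of Corollary~\ref{idcorr}, that since the $\psi_{\fp}(z)$ (the leading terms of $\Psi_{\fp}(z)$) are linearly independent and the distinct exponential factors $\exp\!\big(\tfrac{1}{q-1}\int d_qz\,\ln l_\fp(z)\big)$ are ``incomparable'' as $q\to1$, each summand must individually have a finite limit; this forces
$$
V^{(\tau)}_{\fp}(z)=w^{(\tau)}_{\fp}(z)\,\exp\Big(-\tfrac{1}{q-1}\int d_qz\,\ln l_\fp(z)\Big)
$$
for some $w^{(\tau)}_{\fp}(z)$ regular at $q=1$. Then the surviving limit is $\hat\tau(z)=\sum_{\fp}w^{(\tau)}_{\fp}(z,q=1)\,\psi_{\fp}(z)$. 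The remaining task is to identify $w^{(\tau)}_{\fp}(z,q=1)$ with $\tau_{\fp}(z)v_{\fp}(z)$. For this I would use the descendent structure: the bare vertex with descendent $\tau$ differs from the bare vertex with descendent $1$ by inserting $\tau(\qmV|_{p_1})$, and under $\text{ev}_{p_1,p_2,*}$ this insertion at the nonsingular point factors, in the $q\to1$ limit, through the operator of quantum multiplication by $\hat\tau(z)$ acting on the identity-element expansion \eqref{expansion}. Concretely, $\hat V^{(\tau)}(z)=\hat\tau(z)\circledast\hat{\bold 1}(z)$ should hold before taking $q\to1$ at the level of the capped vertex (the descendent at $p_1$ becomes the quantum tautological insertion after capping), and expanding $\hat\tau(z)\circledast\hat{\bold 1}(z)=\sum_{\fp}\tau_{\fp}(z)v_{\fp}(z)\psi_{\fp}(z)$ in the eigenbasis then matches coefficients with $w^{(\tau)}_{\fp}(z,q=1)$. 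Combining with the exponential prefactor extracted above yields \eqref{bvwd}.

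The main obstacle is the step identifying the regular part $w^{(\tau)}_{\fp}$ with $\tau_{\fp}(z)v_{\fp}(z)$: one must justify that inserting the descendent $\tau(\qmV|_{p_1})$ at the relative point and then applying the capping operator is the same, in the $q\to1$ limit, as acting by the operator of quantum multiplication by $\hat\tau(z)$ on the identity element, i.e.\ that $\hat V^{(\tau)}(z)=\hat\tau(z)\circledast\hat{\bold1}(z)$. This is plausible from the degeneration/gluing picture—degenerate $\mathbb{P}^1$ into two components, put the descendent on one and the relative point on the other—but it requires care with the gluing operator $\bold G$ and with the fact that $\hat\tau(z)$ itself is defined as a capped object, so one has to check the compatibility of the two capping operations; the eigenvalue statement ($\psi_{\fp}(z)$ is an eigenvector with eigenvalue $\tau_{\fp}(z)$) is then immediate by definition of $\tau_{\fp}(z)$. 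Everything else is a repetition of the linear-independence argument already used for Corollary~\ref{idcorr}.
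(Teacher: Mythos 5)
Your first half is exactly the paper's argument: write $\hat\tau(z)=\lim_{q\to1}\Psi(z)V^{(\tau)}(z)=\lim_{q\to1}\sum_{\fp}\Psi_{\fp}(z)V^{(\tau)}_{\fp}(z)$, note the limit exists, and use the linear independence of the $\psi_{\fp}(z)$ together with the exponential prefactors of $\Psi_{\fp}(z)$ to force $V^{(\tau)}_{\fp}(z)=w_{\fp}(z,q)\exp\bigl(-\tfrac{1}{q-1}\int d_qz\,\ln l_{\fp}(z)\bigr)$ with $w_{\fp}$ regular at $q=1$, hence $\hat\tau(z)=\sum_{\fp}w_{\fp}(z,1)\psi_{\fp}(z)$. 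That part is fine.

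The place where you go astray is the identification $w_{\fp}(z,1)=\tau_{\fp}(z)v_{\fp}(z)$. You flag as the ``main obstacle'' the need to prove a finite-$q$ geometric identity $\hat V^{(\tau)}(z)=\hat\tau(z)\circledast\hat{\bold 1}(z)$ by a degeneration/gluing argument, and you leave that unproved. No such identity is needed (and at finite $q$ it does not even typecheck: $\circledast$ is defined on $K_{\bT}(\N_{k,n})[[z]]$, whereas $\hat V^{(\tau)}(z)$ lives in $K_{\bG}$). Everything required is already available at $q=1$: by definition $\hat\tau(z)=\lim_{q\to1}\hat V^{(\tau)}(z)$, and since $\hat{\bold 1}(z)$ has already been shown to be the multiplicative identity of $QK_{\bT}(\N_{k,n})$, one has tautologically
$$
\hat\tau(z)=\hat\tau(z)\circledast\hat{\bold 1}(z)=\hat\tau(z)\circledast\sum_{\fp}v_{\fp}(z)\psi_{\fp}(z)=\sum_{\fp}\tau_{\fp}(z)v_{\fp}(z)\psi_{\fp}(z),
$$
the last equality being the definition of $\tau_{\fp}(z)$ as the eigenvalue of quantum multiplication by $\hat\tau(z)$ on the eigenvector $\psi_{\fp}(z)$. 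Comparing with $\hat\tau(z)=\sum_{\fp}w_{\fp}(z,1)\psi_{\fp}(z)$ and using linear independence of the $\psi_{\fp}(z)$ gives $w_{\fp}(z,1)=\tau_{\fp}(z)v_{\fp}(z)$, which is the Proposition. So as written your proposal has a gap at its last step, but it is closed by this one-line observation rather than by the quasimap degeneration analysis you sketch; the latter is both unnecessary and harder to make rigorous than the problem it is meant to solve.
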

\begin{proof}
	Following the argument of Corollary \ref{idcorr}, we obtain that the following limit exists:
	$$
	\hat{\tau}(z)=\lim\limits_{q\to 1} \Psi(z) V^{(\tau)}(z) =\lim\limits_{q\to 1} \sum\limits_{\fp}\,
	\Psi_{\fp}(z) V^{(\tau)}_{\fp}(z).
	$$
	Thus, the coefficients of the descendent vertex must be of the following form:
	$$
	V^{(\tau)}_{\fp}(z)=w_\fp(z,q) \exp \Big(-{\frac{1}{q-1}\int d_q z \ln (l_\fp(z)) } \Big)
	$$
	for some $w_\fp(z,q)$ regular at $q=1$. Therefore, $\hat{\tau}(z)=\sum\limits_{\fp} w_\fp(z,1) \psi_{\fp}(z)$.
	However, at the same time
	$$
	\hat{\tau}(z)=\hat{\tau}(z) \circledast \hat{1}(z) =\hat{\tau}(z)  \circledast \sum\limits_{\fp} v_{\fp}(z)  \psi_{\fp}(z) =\sum_{\fp} \tau_{\fp}(z) v_{\fp}(z)  \psi_{\fp}(z)
	$$
	and thus, the result follows from linear independence of $\psi_{\fp}(z)$.
\end{proof}
We conclude that the eigenvalues of the operator of quantum multiplication by $\hat{\tau}(z)$ can be computed from the
asymptotics of the bare vertex functions.
\begin{Corollary}
	The eigenvalues of the operator of quantum multiplication by $\hat{\tau}(z)$ can be expressed as follows:
	\be
	\label{eval}
	\tau_{\fp}(z)=\lim\limits_{q \rightarrow 1 } \dfrac{V^{(\tau)}_{\fp}(z)}{V^{(1)}_{\fp}(z)}
	\ee
\end{Corollary}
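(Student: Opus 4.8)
The plan is to deduce this immediately from the two preceding results: the $q\to 1$ asymptotic \eqref{bvwd} of the descendent bare vertex and its special case $\tau=1$ recorded in Corollary \ref{idcorr}. The crucial observation is that both expansions carry \emph{the same} prefactor $\exp\big(-\frac{1}{q-1}\int d_q z\,\ln(l_\fp(z))\big)$, which is singular at $q=1$ but depends only on the fixed point $\fp$ and not on the descendent $\tau$. Hence in the quotient $V^{(\tau)}_{\fp}(z)/V^{(1)}_{\fp}(z)$ this divergent factor cancels identically, leaving
$$
\frac{V^{(\tau)}_{\fp}(z)}{V^{(1)}_{\fp}(z)}=\frac{\tau_{\fp}(z)\,v_{\fp}(z)+\cdots}{v_{\fp}(z)+\cdots},
$$
where the dots denote terms vanishing as $q\to 1$. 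Passing to the limit yields $\tau_{\fp}(z)$, which is \eqref{eval}.

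The only point requiring a separate argument is that the denominator is not identically zero, i.e. that $v_{\fp}(z)$ --- the $\fp$-th coordinate of the multiplicative identity $\hat{\bold{1}}(z)$ in the eigenbasis $\{\psi_{\fp}(z)\}$ of \eqref{expansion} --- is a nonzero element of ${\mathcal{A}}[[z]]$ for every $\fp$. I would check this in the classical limit: at $z=0$ one has $\hat{\bold{1}}(0)=\O_{\N_{k,n}}$ and the vectors $\psi_{\fp}(0)$ degenerate to scalar multiples of the fixed-point classes $\O_{\fp}$, so by the localization theorem $\O_{\N_{k,n}}$ has a nonzero component along each $\O_{\fp}$; thus $v_{\fp}(0)\neq 0$ and therefore $v_{\fp}(z)\neq 0$. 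Together with $V^{(1)}_{\fp}(z)=1+O(z)$ (the degree-zero quasimap moduli space is $\N_{k,n}$ itself, so the ratio is also a well-defined formal object), this guarantees that the right-hand side of \eqref{eval} makes sense and that its $q\to 1$ limit exists by the previous Proposition.

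I do not anticipate any real obstacle: the Corollary is a formal consequence of \eqref{bvwd} and Corollary \ref{idcorr}. The only care needed is bookkeeping of the ``$\cdots$'' error terms --- one must observe that the ratio of two series each of the form $(\text{regular at }q=1)\cdot(\text{common exponential prefactor})$ is again regular at $q=1$, with value there equal to the ratio of the leading coefficients, which is immediate once the prefactors are cancelled and $v_{\fp}(z)\neq 0$ is known.
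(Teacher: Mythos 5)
Your argument is correct and is exactly the intended one: the paper states this Corollary without proof, as an immediate consequence of the preceding Proposition together with Corollary \ref{idcorr}, obtained precisely by cancelling the common exponential prefactor in the ratio $V^{(\tau)}_{\fp}(z)/V^{(1)}_{\fp}(z)$ and passing to the limit. Your additional check that $v_{\fp}(z)\neq 0$ (via the classical limit $\hat{\bold{1}}(0)=\O_{\N_{k,n}}$ and localization) is a detail the paper leaves implicit, and it is the right point to verify.
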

In the next section the  bare vertex functions with descendants $V^{(\tau)}_{\fp}(z)$  are  computed using equivariant localization in K-theory. It turns out that these functions are given by some standard $q$-hypergeometric series expansions, so that the limit (\ref{eval}) provides an explicit formula for the eigenvalues of quantum tautological bundles. Namely, these eigenvalues coincide with the symmetric polynomials of the solutions of Bethe equations. 

\subsection{Virtual tangent bundle and the structure sheaf on $\N_{k,n}$}
Before we proceed to the actual vertex computation, one has to discuss the version of virtual structure sheaf more suitable for computations as well as teh virtual tangent bundle, discussed in higher generality for Nakajma varieties in \cite{lectok}, Section 6.1. 

The moduli spaces of nonsingular quasimaps ${{\qm}^d}_{\text{nonsing}\, p}$ and relative quasimaps ${{\qm}^d}_{\text{relative}\, p}$ above have a perfect deformation-obstruction theory \cite{ionut}. This allows one to construct a tangent virtual bundle $T^{\textrm{vir}}$, a virtual structure sheaf $\vss$ and a virtual canonical bundle over these moduli spaces. We will define multiplication in the quantum  $K$-theory using this data. Without going into detail of the construction of this virtual sheaf, we state the formula of the reduced virtual tangent bundle.  Let $(\qmV,\qmW)$ be data defining a point on ${{\qm}^d}_{\text{nonsing}\, p}$. We define the fiber of the reduced virtual tangent
bundle to ${{\qm}^d}_{\text{nonsing }\, p}$ at this point to be equal to:
\be
T^{\textrm{vir}}_{(\qmV,\qmW)} {{\qm}^d}_{\text{nonsing  p}}=H^{\bullet}(\qmP\oplus\hbar\, \qmP^{*} ) - T_{f(p)} \N_{k,n}
\ee
where $T_{f(p)} \N_{k,n}$ is a normalizing term, the essence of which will be explained later, and  $\qmP$ is the polarization bundle on the curve $\mathcal{C}$:
$$
\qmP=\qmW^{*} \otimes \qmV - \qmV^{*} \otimes \qmV.
$$
The symmetrized virtual structure sheaf is defined by:
\be
\label{virdef}
{\vss}=\O_{\textrm{vir}}\otimes {\mathscr{K}^{1/2}_{\textrm{vir}}} q^{\deg(\qmP)/2},
\ee
where $\O_{\textrm{vir}}$ is a standard structure sheaf and $\mathscr{K}_{\textrm{vir}}={\det}^{-1}T^{\textrm{vir}} \qm^{d}$ is the virtual canonical bundle.

Since we will be using the symmetrized virtual structure sheaf, we need to adjust the standard bilinear form on $K$-theory. Let $(\cdot,\cdot)$ be given by the following formula:
\be
{(\mathcal{F},\mathcal{G})}=\chi(\mathcal{F}\otimes\mathcal{G}\otimes{K}^{-1/2}),
\ee
where $K$ is the canonical class.

\subsection{Explicit formula for the bare vertex with descendants\label{comver}}
It will be convenient to adopt the following notations:
\be
\phi(x)=\prod^{\infty}_{i=0}(1-q^ix),\quad  \{x\}_{d}=\dfrac{(\hbar/x,q)_{d}}{(q/x,q)_{d}} \, (-q^{1/2} \hbar^{-1/2})^d, \ \ \textrm{where}  \ \ (x,q)_{d}=\frac{\varphi(x)}{\varphi(q^dx)}.\nonumber
\ee
\begin{Proposition} \label{prover}
	Let $\fp=\{x_1,\cdots,x_k\} \subset \{a_1,\cdots,a_n\}$ be a $k$-subset defining a torus fixed point $\fp\in \N^{\bT}_{k,n}$.
	Then the coefficient of the vertex function at this point is given by the following $q$-hypergeometric function:
	$$
	V^{(\tau)}_{\fp}(z) = \sum\limits_{d_1,\cdots, d_k\in {\mathbb{Z}_{\ge 0}}}\, z^{d} q^{n d/2}\, \prod\limits_{i,j=1}^{k}\{x_i/x_j\}^{-1}_{d_i-d_j}   \prod\limits_{i=1}^{k} \prod\limits_{j=1}^{n} \{x_i/a_j\}_{d_i} \tau(x_1 q^{-d_1},\cdots, x_k q^{-d_k}),
	$$
	where $d=\sum^k_{i=1}d_i$.
\end{Proposition}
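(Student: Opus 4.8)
The plan is to evaluate the pushforward defining $V^{(\tau)}(z)$ degree by degree in $z$ by $\bG$-equivariant localization on the quasimap space $\qm^{d}_{\mathrm{nonsing}\,p_2}$, where $\bG=\bT\times\C^\times_q$; although this space is not proper, its $\bG$-fixed locus is, so the localization theorem applies. So first I would classify the $\bG$-fixed quasimaps. $\bT$-equivariance of $(\qmV,f)$ forces $\qmV$ to split as a sum of $\bT$-equivariant line bundles on $\mathbb{P}^1$, and $\C^\times_q$-equivariance forces each twist to be supported at the two $\C^\times_q$-fixed points $p_1=0$ and $p_2=\infty$; nonsingularity at $p_2$ makes the section $f$ nondegenerate there, which pins the fiber $\qmV|_{p_2}$ to a coordinate subspace $\fp=\{x_1,\dots,x_k\}\subset\{a_1,\dots,a_n\}$ and pushes all the degree to $p_1$. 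Hence $\qmV\cong\bigoplus_{i=1}^{k}x_i\,\mathcal{O}(d_i\cdot p_1)$ with $d_i\in\Z_{\ge0}$, and the isolated $\bG$-fixed points over $\fp\in\N^{\bT}_{k,n}$ are labelled by $(d_1,\dots,d_k)$ with $d=\sum_i d_i$.

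Next I would write the contribution of such a fixed point to $\mathrm{ev}_{p_2,*}\big(\vss\,\tau(\qmV|_{p_1})\big)$ as the restriction of $\vss\,\tau(\qmV|_{p_1})$ divided by the symmetrized $K$-theoretic Euler class of the virtual normal bundle. Since the reduced virtual tangent bundle is $H^{\bullet}(\qmP\oplus\hbar\,\qmP^{\ast})-T_{\fp}\N_{k,n}$ with $\qmP=\qmW^{\ast}\otimes\qmV-\qmV^{\ast}\otimes\qmV$, at this fixed point the Chern roots of $\qmW$ become $a_j$ and the $i$-th Chern root of $\qmV|_{p_1}$ becomes $x_iq^{-d_i}$ (the $\C^\times_q$-weight of the fiber of $\mathcal{O}(d_i\cdot p_1)$ at $p_1$), which at once gives the descendent factor $\tau(x_1q^{-d_1},\dots,x_kq^{-d_k})$. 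I would then compute $H^{\bullet}(\mathbb{P}^1,-)$ of each equivariant summand $w\,\mathcal{O}(m\cdot p_1)$ of $\qmP\oplus\hbar\,\qmP^{\ast}$ from its $H^0$ and $H^1$; pairing each summand of $\qmW^{\ast}\otimes\qmV$ with its $\hbar$-twisted dual inside $\hbar\,\qmP^{\ast}$ collapses to $\{x_i/a_j\}_{d_i}$, while the $-\qmV^{\ast}\otimes\qmV$ part produces $\{x_i/x_j\}^{-1}_{d_i-d_j}$ — the $q$-Pochhammer form of $\{x\}_d$ being exactly this ratio of cohomology contributions of a line bundle and its $\hbar$-twisted dual. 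Subtracting $T_{\fp}\N_{k,n}$ removes the degree-independent part, so the $d=0$ contribution is the classical $\tau(x_1,\dots,x_k)$ and no spurious factors survive; and the symmetrizing twist $\mathscr{K}_{\mathrm{vir}}^{1/2}q^{\deg(\qmP)/2}$ in $\vss$ accounts for the $(-q^{1/2}\hbar^{-1/2})^{d_i}$ inside each $\{x\}_{d_i}$ and, since $\deg(\qmP)=nd$, for the overall $q^{nd/2}$. Summing the $z^d$-weighted contributions over all $(d_1,\dots,d_k)$ then gives the stated formula.

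The main obstacle will be the weight bookkeeping in the second step: pinning down the $\C^\times_q$-weights on $H^0$ and $H^1$ of $\mathcal{O}(m\cdot p_1)$ (including the borderline degrees where both vanish or both contribute), checking that the symmetrized virtual structure sheaf contributes precisely the square-root normalization $(-q^{1/2}\hbar^{-1/2})^d$ and $q^{nd/2}$ rather than some other twist, and verifying that the subtraction of $T_{\fp}\N_{k,n}$ cancels exactly the $d$-independent contributions, so that the product over $i,j$ of $\{x_i/x_j\}^{-1}_{d_i-d_j}$ — regular on the diagonal since $\{x\}_0=1$ — is the correct normalization. Once these are settled, the remainder is a direct summation over the fixed-point data.
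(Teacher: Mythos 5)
Your proposal is correct and follows essentially the same route as the paper: $\bG$-equivariant localization over the proper fixed locus, classification of fixed quasimaps as sums of equivariant line bundles $x_i q^{-d_i}\mathcal{O}(d_i)$ with degree concentrated at $p_1$, and evaluation of the reduced virtual tangent space $H^{\bullet}(\qmP\oplus\hbar\,\qmP^{\ast})-T_{\fp}\N_{k,n}$ summand by summand via the roof function, yielding the factors $\{x_i/a_j\}_{d_i}$, $\{x_i/x_j\}^{-1}_{d_i-d_j}$, $q^{nd/2}$ and the descendent insertion $\tau(x_1q^{-d_1},\dots,x_kq^{-d_k})$. The "weight bookkeeping" you flag as the remaining obstacle is exactly what the paper settles in its Lemma on the cohomology characters of $a_iq^{-d_i}\mathcal{O}(d_i)$ and the subsequent one-line-bundle computation producing $\{x\}_d$.
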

\begin{proof}
	If $\tb$, $\tbW$ are the tautological bundles on $\N_{k,n}$, then the tangent bundle has the form:
	$$
	T \N_{k,n} = \tbP + \hbar \tbP^* \ \  \textrm{for} \ \  \tbP=\tbW^{*}\otimes \tb - \tb^{*} \otimes \tb.
	$$
	Recall that the degree $d$ quasimaps to $\N_{k,n}$ are given by a pair of bundles: rank $k$, degree $d$ bundle
	$\qmV$ and rank $n$ trivial bundle $\qmW$ on ${\mathbb{P}}^1$. Let us consider the set of $\bT$-fixed points  $(\qmV,\qmW) \in  (\qm^{d}_{\text{nonsing} \, p_2})^{\bT}$ such that the value of the evaluation map at $p_2$ is $\fp \in \N_{k,n}^{\bT}$.
	The virtual tangent space is a representation of the torus $\bT$. We denote
	its $\bT$-character by:
	\be \label{pcontr}
	\chi(d)=\textrm{char}_{\bT}\Big( T^{vir}_{(\qmV,\qmW)} \qm^{d}  \Big).
	\ee
	Localization in $K$-theory gives the following formula for the equivariant pushforward:
	$$
	V^{(\tau)}_{\fp}(z)=\sum\limits_{d=0}^{\infty}\sum\limits_{(\qmV,\qmW) \in  (\qm^{d}_{\text{nonsing} \, p_2})^{\bT}}\, \hat{s}(  \chi(d) )\, z^{d} q^{\deg(\qmP)/2} \tau (\left.\qmV\right|_{p_1}),
	$$where the sum runs over the $\bT$-fixed quasimaps which take value $\fp$ at the nonsingular point $p_2$.  We use notation $\hat{s}$ for the Okounkov's roof function defined by
	$$
	\hat{s}(x)=\dfrac{1}{x^{1/2}-x^{-1/2}}, \ \ \ \hat{s}(x+y)=\hat{s}(x)\hat{s}(y).
	$$
	Note, that the tangent weight contribute to vertex via the roof function $\hat{s}(x)$ because the symmetrized virtual structure sheaf (\ref{virdef})
	is defined together with a shift on the square root of canonical bundle ${\mathscr{K}^{1/2}_{\textrm{vir}}}$.
	Thus, our goal is to compute (\ref{pcontr}).  The \textit{reduced} virtual tangent space to $\qm^{d}_{\textrm{nonsing} \, p_2}$
	at such point is given by\footnote{We use the reduced virtual tangent space which differs from standard one by subtracting $T_{\fp} \N_{k,n}$. This term does not depend on $d$ and thus produces a simple multiple in the vertex function. This is the multiple normalizing the  vertex such that $V^{(\tau)}_{\fp}(0)=\tau$. }:
	\be
	\label{vtan}
	T^{vir}_{(\qmV,\qmW)} \qm^{d}=H^{\bullet}(\qmP\oplus\hbar\, \qmP^{*} ) - T_{\fp} \N_{k,n},
	\ee
	where $\qmP$ is the polarization bundle $ \qmP=\qmW^{*} \otimes \qmV - \qmV^{*} \otimes \qmV.$
	
	The following Lemma drastically simplifies the computation of the contribution of $\textrm{char}_{\bT}\Big( T^{vir}_{(\qmV,\qmW)} \qm^{d}  \Big)$ to the localization formula.
	\begin{Lemma}
		\label{chlemma}
		Let $\qmP$ be a polarization  bundle on $\mathbb{P}^1$  corresponding to a  $\bT$-fixed point on
		$\qm^{d}_{\textrm{nonsing} \, p_2}$. It splits into a sum of $\bT$-equivariant line bundles $\qmP=\bigoplus_{i} a_i q^{-d_i} \O(d_i)$
		with for some characters $a_i$ of the framing torus $\bA$. The cohomology of these line bundles
		have the following  $\bT$-characters:
		\be
		\label{charH}
		&&\textrm{char}_{\bT}\Big(H^{\bullet}(a_i q^{-d_i} \O(d_i))\Big)=a_i \dfrac{q^{-d_i-1}-1}{q^{-1}-1} = \nonumber\\
		&&\begin{cases}
			a_i+a_i q^{-1}+\cdots +a_i q^{-d_i} & {\emph{if}}\quad  d_i>0\\
			0 & {\emph{if}}\quad  d_i=-1\\
			-a_iq-a_iq^2-\cdots -a_i q^{-d_i-1} & {\emph{if}}\quad  d_i<-1.\\
		\end{cases}
		\ee
	\end{Lemma}
	
	\begin{proof}
		It is clear that the tautological bundles $\qmV$ and $\qmW$ representing $\bT$-fixed quasimap split into the sum of line bundles equivalently. It means that $\qmP=\bigoplus_{i} x_i \O(d_i)$
		for some $\bT$ -characters $x_i$. Since the quasimap is nonsingular at $p_2=\infty$ the corresponding section should not vanish at $p_2$. The only such section
		of $\O(d_i)$ is $z^{d_{i}}$. The torus $\bT$ acts on sections by $z\to q z$. By assumption, this section must be $\bT$-fixed. It is possible only if $x_i= a_i q^{-d_i}$ for some
		character $a_i$ of framing torus $\bA$, which does not act on $\mathbb{P}^1$. Finally, if $d_i\geq 0$ then only zeroth cohomology group $H^{0}(\O(d_i))$ is nontrivial and is spanned by global sections $1,z,\cdots, z^{d_i}$. Thus, we obtain (\ref{charH}). For  $d_i<0$, applying the Serre duality one obtains same result.
	\end{proof}

	By Lemma \ref{chlemma}, the polarization bundle representing a fixed point on the moduli space of quasimaps splits to a sum of linear subbundles. By multiplicativity
	of the roof function it is enough to compute the contribution of a single line subbundle $x \O(d) \subset \qmP(d)$ to the weight of the fixed point.
	According to Lemma \ref{chlemma} the contribution of such bundle to (\ref{vtan}) is given by:
	\be
	&&x \dfrac{q^{-d-1}-1}{q^{-1}-1} + x^{-1} \hbar \dfrac{q^{d-1}-1}{q^{-1}-1}-x-x^{-1} \hbar=\nonumber\\
	&&\begin{cases}
		x q^{-1} (1+q^{-1}+\cdots + q^{-d+1})-x^{-1} \hbar
		(1+q+\cdots +q ^{d-1})\quad {\rm if} \quad d>0,\\
		x^{-1}\hbar q^{-1} (1+q^{-1}+\cdots + q^{d+1})-x
		(1+q+\cdots +q ^{-d-1})\quad {\rm if} \quad d<0.
	\end{cases}
	\ee
	Applying the roof function we find:
	\be\nonumber
	\begin{cases}
		\hat{s}\Big(x q^{-1} (1+q^{-1}+\cdots + q^{-d+1})-x^{-1} \hbar
		(1+q+\cdots +q ^{d-1})\Big)=\{x\}_{d} \quad {\rm if} \quad d>0,\\
		\hat{s}\Big(x^{-1}\hbar q^{-1} (1+q^{-1}+\cdots + q^{d+1})-x
		(1+q+\cdots +q ^{-d-1})\Big)=\{x\}_{d} \quad {\rm if} \quad d<0.
	\end{cases}
	\ee

	It is clear that the fixed points  on the moduli space of quasimaps taking value $\fp=\{x_1,\cdots,x_k\}$ in the nonsingular point, correspond to the bundles of the form:
	\be
	\label{tbf}
	\qmV=\O(d_1) q^{-d_1} x_1 \oplus \cdots \O(d_k) q^{-d_k} x_k, \ \ \ \qmW=\O a_1 \oplus \cdots \oplus \O a_n,
	\ee
	where $d_1+\cdots+d_k=d$. Hence, the terms $\qmW^{*}\otimes \qmV$  and  $-\qmV^{*}\otimes \qmV$ in the polarization produce the following contributions:
	$$
	\qmW^{*} \otimes \qmV \longrightarrow \prod\limits_{j=1}^{n} \{x_i/a_j\}_{d_i} \ \ \ -\qmV^{*} \otimes\qmV \longrightarrow  \prod\limits_{i,j=1}^{k}\{x_i/x_j\}^{-1}_{d_i-d_j}.
	$$
	Note, that $\deg(\qmP) = n d$. That gives the polarization term $q^{n d/2}$ in the vertex. Finally, from (\ref{tbf}) we obtain
	$\tau (\left.\qmV\right|_{p_1})=\tau(x_1 q^{-d_1},\cdots,x_k q^{-d_k})$ concluding the computation.
\end{proof}

\subsection{Integral representation for bare vertex function with descendent}
Similarly to  standard $q$-hypergeometric series, the vertex function has a  Mellin - Barnes type integral representation: 
\begin{Prop}
	\be
	\label{verint}
	&&V^{(\tau)}_{\fp}(z)= \nonumber\\
	&& \dfrac{1}{2 \pi i \alpha_p} \int\limits_{C_p} \prod\limits_{i=1}^{k} \dfrac{d s_i}{s_i} \, e^{-\frac{\ln(z_{\sharp}) \ln(s_i) }{\ln(q)}} \,  \prod\limits_{i,j=1}^{k} \dfrac{\varphi\Big( \frac{s_i}{ s_j}\Big)}{\varphi\Big(\frac{q}{\hbar} \frac{s_i}{ s_j}\Big)} \prod\limits_{i=1}^{n}\prod\limits_{j=1}^{k} \dfrac{\varphi\Big(\frac{q}{\hbar} \frac{s_j}{ a_i }\Big)}{\varphi\Big(\frac{s_j}{ a_i }\Big)}
	\tau(s_1,\cdots,s_k).
	\ee
	Here  the contour of integration $C_p$, corresponding to a fixed point $p=\{x_1,\cdots,x_k\}\subset\{a_1,\cdots,a_n\}$
	is a positively oriented contour enclosing the poles at $s_i=q^{-d_i} x_i$ for $i=1, \dots, k$, $d_i\in \mathbb{Z}_{\ge 0}$. We  also  used a shifted degree counting parameter $z_{\sharp}=(-1)^n  \hbar^{n/2}z$ and $\alpha_p$ is a normalization constant:
	$$
	\alpha_p=\prod\limits_{i,j=1}^{k} \dfrac{\varphi\Big(\frac{x_i}{x_j}\Big)}{\varphi\Big(\frac{q x_i}{\hbar x_j}\Big)}\,
	\prod\limits_{i=1}^{n}\prod\limits_{j=1}^{k} \dfrac{\varphi\Big(\frac{q x_j}{\hbar a_i}\Big)}{\varphi\Big(\frac{x_j}{a_i}\Big)} \prod\limits_{i=1}^{k}  e^{-\frac{\ln(z_{\sharp}) \ln(x_i) }{\ln(q)}}.
	$$
\end{Prop}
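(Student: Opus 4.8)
The plan is to evaluate the right-hand side of (\ref{verint}) by iterated residues and to match the result, term by term, with the localization formula of the preceding Proposition. The only property of $\varphi$ that is used is the functional equation $\varphi(x)=(1-x)\,\varphi(qx)$, equivalently $(x,q)_{d}=\varphi(x)/\varphi(q^{d}x)$.

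\emph{Step 1 (pole structure and contour).} Inside $C_{p}$ the integrand has poles only along the lattices $s_{i}\in x_{i}q^{-\mathbb{Z}_{\ge 0}}$, coming from the factors $1/\varphi(s_{i}/a_{j})$ with $a_{j}=x_{i}$. All other poles --- those of $1/\varphi(s_{i}/a_{j})$ with $a_{j}\neq x_{i}$, and those of $1/\varphi(\frac{q}{\hbar}s_{i}/s_{j})$ --- sit on lattices shifted by an $a_{j}\notin p$, or by $\hbar$, hence disjoint from $x_{i}q^{-\mathbb{Z}_{\ge 0}}$ for generic $a_{1},\dots,a_{n},\hbar$ and $|q|<1$. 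One chooses $C_{p}$ to be a product of small loops about $s_{i}=x_{i}$, enlarged into a $q$-spiral so that exactly the points $s_{i}=x_{i}q^{-d_{i}}$, $d_{i}\ge 0$, are enclosed, and one must check that for $|z|$ small the integral equals the sum over $(d_{1},\dots,d_{k})\in\mathbb{Z}_{\ge 0}^{k}$ of the residues there; this rests on the asymptotics of $\varphi$ and on the convergence factor furnished by $e^{-\ln(z_{\sharp})\ln(s_{i})/\ln q}$.

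\emph{Step 2 (the residues).} The singular part of the integrand factorizes over $i$, so the iterated residue reduces to the one-variable computation of $\operatorname*{Res}_{s=xq^{-d}}\big(s\,\varphi(s/x)\big)^{-1}$, which by the functional equation equals $(q,q)_{d}^{-1}$ times a fixed monomial in $q$ and a $d$-independent constant. Every remaining factor is regular at $s_{i}=x_{i}q^{-d_{i}}$ and is evaluated directly: the descendent becomes $\tau(x_{1}q^{-d_{1}},\dots,x_{k}q^{-d_{k}})$; the exponential becomes $z_{\sharp}^{\,d_{i}}\,e^{-\ln(z_{\sharp})\ln(x_{i})/\ln q}$; and each surviving ratio of $\varphi$'s is rewritten, again via $(x,q)_{d}=\varphi(x)/\varphi(q^{d}x)$, as a finite $q$-Pochhammer symbol, so that $\prod_{i}\prod_{j}\varphi(\frac{q}{\hbar}\frac{s_{j}}{a_{i}})/\varphi(\frac{s_{j}}{a_{i}})$ becomes $\prod_{i=1}^{k}\prod_{j=1}^{n}\{x_{i}/a_{j}\}_{d_{i}}$ and $\prod_{i\neq j}\varphi(\frac{s_{i}}{s_{j}})/\varphi(\frac{q}{\hbar}\frac{s_{i}}{s_{j}})$ becomes $\prod_{i,j}\{x_{i}/x_{j}\}^{-1}_{d_{i}-d_{j}}$ (the diagonal term being trivial), up to monomials in $q$ and $\hbar$. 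The factor $(q,q)_{d_{i}}^{-1}$ produced by the residue is matched, on the localization side, by the resonant term $\{1\}_{d_{i}}=(\hbar,q)_{d_{i}}(q,q)_{d_{i}}^{-1}(-q^{1/2}\hbar^{-1/2})^{d_{i}}$ sitting inside $\prod_{j}\{x_{i}/a_{j}\}_{d_{i}}$, which is present precisely because each Chern root $x_{i}$ occurs among $a_{1},\dots,a_{n}$.

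\emph{Step 3 (assembling).} The $d$-independent scalars generated in Step 2 --- the residue normalizations, the resonant values of the $\varphi$-ratios, and $\prod_{i}e^{-\ln(z_{\sharp})\ln(x_{i})/\ln q}$ --- are exactly the constant $\alpha_{p}$, so division by $\alpha_{p}$ removes them. The surviving $d$-dependent powers of $q$ and $\hbar$, together with $\prod_{i}z_{\sharp}^{\,d_{i}}=z_{\sharp}^{\,d}=(-1)^{nd}\hbar^{nd/2}z^{d}$ and the identity $\deg(\qmP)=nd$, combine into the prefactor $z^{d}q^{nd/2}$ with $d=\sum_{i}d_{i}$. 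This reproduces the localization formula and proves (\ref{verint}). The main obstacle is Step 1: pinning down the contour $C_{p}$ for generic equivariant parameters and showing rigorously that the integral equals the sum of the enclosed residues --- and, along the way, fixing the convention for the resonant values $\varphi(1)$ consistently in the integrand and in $\alpha_{p}$; granting that, Steps 2 and 3 are routine $q$-series bookkeeping.
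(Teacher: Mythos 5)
Your proof is correct and follows essentially the same route as the paper: the integral over $C_p$ is computed as a sum of iterated residues at $s_i=x_iq^{-d_i}$, and each $\varphi$-ratio, normalized by $\alpha_p$, is converted via $(x,q)_d=\varphi(x)/\varphi(q^dx)$ into the corresponding $\{\cdot\}_d$ factor of the localization formula, the leftover monomials being absorbed into the shift $z\mapsto z_{\sharp}$ and the prefactor $q^{nd/2}$. The contour and convergence issues you flag in Step 1, together with the formal cancellation of the resonant values $\varphi(1)$ between the integrand and $\alpha_p$, are treated purely formally in the paper as well (the contour is in effect defined by the set of poles it encloses), so your argument is, if anything, the more careful of the two.
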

\begin{proof}In order to prove this Proposition one has to
	evaluate residues at zeroes of the function $\varphi\Big(\frac{s_j}{ a_i }\Big)$. One can see, for example, that the contribution of one fraction $\dfrac{\varphi\Big(\frac{q}{\hbar} \frac{s_j}{ a_i }\Big)}{\varphi\Big(\frac{s_j}{ a_i }\Big)} $ is:
	\be
	\frac{\Big(1-q^{-d_{j}+1}\frac{x_j}{\hbar a_i}\Big)\Big(1-q^{-d_{j}+2}\frac{x_j}{\hbar a_i}\Big)\cdots  \Big(1-\frac{x_j}{\hbar a_i}\Big)}{\Big(1-q^{-d_{j}}\frac{x_j}{a_i}\Big)\Big(1-q^{-d_{j}+1}\frac{x_j}{a_i}\Big)\cdots  \Big(1-q^{-1}\frac{x_j}{ a_i}\Big)}=
	(-1)^{d_j}\Big(\frac{q}{\hbar}\Big)^{d_j/2}\{x_j/a_i\}_{d_j}.\nonumber
	\ee
	These extra coefficients provide shifts in the $z$-variable as well as necessary extra $q$-contributions. Combining contributions from all other terms we obtain the statement of the  Proposition.
\end{proof}
Such integral representation is convenient for the computation of the $q\to 1$ asymptotical behavior of the vertex function. It is well known
that in this limit a single term in the $q$-hypergeometric series dominates. In other words, in this limit, the integral (\ref{verint})
converges to its saddle point approximation and we arrive to the following Proposition:
\begin{Prop}
	At $q\to 1$ the saddle point of the integral (\ref{verint}) is determined by Bethe equations:
	\be
	\label{baeq}
	\prod\limits_{j\neq i} \,\dfrac{s_i-s_j \hbar}{s_i \hbar-s_j} \prod\limits_{j=1}^{n} \, \dfrac{s_i-a_j}{a_j \hbar - s_i}=z \hbar^{-n/2}, \ \ i=1,\dots,k.
	\ee
\end{Prop}
\begin{proof}
	Let $\Phi$ denote the integrand in (\ref{verint}). The saddle point is defined by the equations:
	$s_i \partial_{s_i} \ln (\Phi) =0$ for $i=1,\dots,k$.
	Let us prove the following Lemma:
	\begin{Lemma}
		Asymptotically, in the limit $q\to 1$ we have $$ x \frac{\partial \ln \varphi(x) }{ \partial x} = -\frac{\ln(1-x)}{\ln(q)} +o(\ln(q)).$$
	\end{Lemma}
	\begin{proof}To show that, one has to expand the expression for $x \frac{\partial \ln \varphi(x) }{ \partial x}$:
		\be
		&& x \frac{\partial \ln \varphi(x) }{ \partial x} = -\sum^{\infty}_{i=0}\frac{q^ix}{1-q^ix}=-\sum^{\infty}_{i=0}\sum^{\infty}_{m=0}q^{i(m+1)}x^{m+1}=-\sum^{\infty}_{m=0}\frac{x^{m+1}}{1-q^{m+1}}=\nonumber\\
		&&-\sum^{\infty}_{m=0}\frac{x^{m+1}}{1-e^{(m+1)\ln(q)}}=-\frac{\ln(1-x)}{\ln(q)} +o(\ln(q)).\nonumber
		\ee
	\end{proof}
	\noindent Hence, we obtain the following equations for the saddle point:
	\be \nonumber
	-z_{\sharp} +\sum_{j=1}^{n} \Big(\ln(1-\frac{s_i}{a_j})-  \ln(1-\frac{s_i}{\hbar a_j})\Big)+\\ \nonumber
	\sum\limits_{j\neq i}\Big(
	-\ln(1-\frac{s_i}{s_j})+\ln(1-\frac{s_j}{s_i})+\ln(1-\frac{s_i}{\hbar s_j})-\ln(1-\frac{s_j}{\hbar s_i})  \Big)=0,
	\ee
	which after exponentiation gives (\ref{baeq}).
\end{proof}

Let us summarize the results of this section:

\begin{Theorem}\label{symmeig}
	The operators of quantum multiplication by quantum tautological classes:
	$$
	{\mathcal{F}} \rightarrow \hat{\tau}(z) \circledast {\mathcal{F}}
	$$	
	are diagonal in the basis $\psi_{\fp}(z)$. The corresponding eigenvalues are given
	by values of the symmetric polynomial $\tau(s_1,\cdots,s_k)$  at the solutions of the Bethe equations (\ref{baeq}), corresponding to $\fp$.
\end{Theorem}
\begin{proof}
	Formula (\ref{eval}) states that the eigenvalue is given by the limit:
	$$
	\tau_{\fp}(z)=\lim\limits_{q \rightarrow 1 } \dfrac{V^{(\tau)}_{\fp}(z)}{V^{(1)}_{\fp}(z)}.
	$$
	Let $\Phi^{(\tau)}(s_1,\cdots,s_k)$ denote the integrand in (\ref{verint}). In the limit $q\to 1$
	the vertex  functions are divergent with leading term given by the saddle point approximation.
	By the previous proposition is means that
	$$
	\tau_{\fp}(z)=\dfrac{\Phi^{(\tau)}(s_1,\cdots,s_k)}{\Phi^{(1)}(s_1,\cdots,s_k)},
	$$
	where $s_i$ satisfy the saddle point equations (\ref{baeq}). The latter ratio is $\tau(s_1,\cdots,s_k)$.
\end{proof}

The last theorem says that the operators of quantum multiplication by quantum classes have the same eigenvalues as the nonlocal Hamiltonians of XXZ spin model. Therefore, to identify these operators with nonlocal Hamiltonians, we need to prove that the corresponding eigenbases are equivalent, i.e., to prove that $\psi_{\fp}(z)$  is the basis of Bethe vectors. Proving this last step occupies the rest of the paper.


\section{$\fsh$ action on $K_{\bT}(\N(n))$ \label{gsec}}
In this section we recall the construction of $\fsh$ action on $K_{\bT}(\N(n))$. We describe the isomorphism of $\fsh$ modules $K_{\bT}(\N(n))=\C^2(a_1)\otimes\cdots\otimes \C^2(a_n)$. Explicitly, this isomorphism maps the standard basis
of $\C^2(a_1)\otimes\cdots\otimes \C^2(a_n)$ to the K-theoretic stable basis of  $K_{\bT}(\N(n))$. 

\subsection{Geometric $R$-matrices and RTT-procedure}
Let us consider the variety $\N(1)$. By definition, it is a union of two components corresponding to the Grassmannians of zero- and one-dimensional hyperplanes in one-dimensional vector space. Geometrically, both components are points and thus $K_{\bT}(\N(1))$ is a two-dimensional vector space over the field $\mathcal{A}$. 
Let us denote by $a$  the equivariant parameter of $\bA\cong\C^{\times}$ and by
$
\mathbb{F}(a):=K_{\bT}(\N(1))
$
the corresponding two-dimensional vector space.

The set of $\bA$-fixed points of $\N(n)$ has the following description:
$$
\N(n)^{\bA}=\N(1)^n=\N(1)\times \cdots \times \N(1). 
$$
This is a general property of the Nakajima quiver varieties
known as \textit{tensor product structure}, see Section 2.4 of \cite{mo}. After localization this implies the following isomorphism of $\mathcal{A}$-vector spaces:
\be \label{tpvect}
K_{\bT}(\N(n)^{\bA})=\mathbb{F}(a_1)\otimes \cdots \otimes \mathbb{F}(a_n).
\ee 
For a permutation $\sigma \in \frak{S}_{n}$ we fix a chamber in the real Lie algebra of $\bA$ given in coordinates by $a_{\sigma(1)}>\cdots>a_{\sigma(n)}$. Let the \textit{slope} $s$ be a generic element of $\Pic(\N(n))\otimes~\mathbb{R}\cong~\mathbb{R}$.
For every such choice of $\sigma$ and $s$ Maulik and Okounkov defined canonical maps:
$$
\textrm{Stab}_{\sigma}^{s}:\ \ K_{\bT}(\N(n)^{\bA}) \longrightarrow K_{\bT}(\N(n)),
$$
known as K-theoretic stable envelope, see Section 2.1 in \cite{os} or Section 9 in \cite{lectok} for the definition of this map. After localization, $\textrm{Stab}_{\sigma}^{s}$ become isomorphisms of vector spaces, in particular these maps are invertible.  The explicit formulas for this map in the case of $\N(n)$ and more generally, for flag varieties, can be found in Section 5 of \cite{rtv}.

For a given transposition $\sigma=(k,m)$ one can define the following linear operators
$$
\mathcal{R}^{s}_{k,m}:=(\textrm{Stab}_{(k,m)}^{s})^{-1}
\circ \textrm{Stab}^{s}_{\textrm{id}},
$$
known as geometric $R$-matrices. By  definition, they satisfy the quantum Yang-Baxter equations:
\be \label{qybe}
\mathcal{R}^{s}_{i,j} \mathcal{R}^{s}_{i,k} \mathcal{R}^{s}_{j,k}=\mathcal{R}^{s}_{j,k} \mathcal{R}^{s}_{i,k} \mathcal{R}^{s}_{i,j}
\ee
for an arbitrary triple $i,j,k$ and a slope $s$. 

By RTT-procedure \cite{DF}, \cite{frt} the set of $R$-matrices
endows the vector spaces $\mathbb{F}(a_i)$ with a structure of 
modules over certain Hopf  algebra, which we denote by $\mathcal{U}(\widehat{\frak{g}})$, see Section 3 of \cite{os} for the definitions. The Hopf structure gives the tensor product (\ref{tpvect}) a structire of module over $\mathcal{U}(\widehat{\frak{g}})$. 

\subsection{Identifications} 
Let $\C^{2}(a)$ denote two-dimesnional evaluation module
of quantum affine algebra $\fsh$.  Let $e_1,e_2$ be the standard basis of $\C^{2}(a)$ corresponding to $1$ and $0$ weight vectors. In the language of spin chains 
these correspond to ``spin up'' and ``spin down'' states.

Let us consider a $\fsh$-module given by the tensor product of two-dimensional evaluation modules 
$
\C^2(a_{1})\otimes \cdots \otimes \C^2(a_{n}).
$
The \textit{standard  basis} of weight $k$-subspace in this module is labeled by $k$-subsets ${\bf p}=\{i_1,\dots, i_k\} \subset \mathbf{n}=\{1,\dots, n\}$. Such a subset corresponds to the basis vector $v_{\bf p}=e_{k_1}\otimes\cdots \otimes e_{k_n}$
with $e_1$ at the positions $i\in {\bf p}$ and $e_2$ at  $i\in {\bf n}\setminus {\bf p}$. 

\begin{Proposition} \label{procan}
Let $s_{0}$ be a slope in ``anticanonical alcove'', i.e., $s_0 = \epsilon [{\mathcal{O}}(1)]$ with $\epsilon \in (-1,0)$. Then the geometric $R$-matrix $\mathcal{R}^{s_0}_{l,l+1}$ evaluated in the basis of fixed points of (\ref{tpvect}) coincides with the trigonometric $\fsh$ $R$-matrix $R_{l,l+1}(a_{l}/a_{l+1})$ evaluated in the standard basis of $v_{\bf p}$. 	
\end{Proposition}

\begin{proof}
For an arbitrary  slope the geometric $R$-matrices $\mathcal{R}^s_{l,l+1}$ were computed explicitly in Section 5.3 of \cite{rtv}. (For $n=2$ see also Section 7.1.8  in \cite{os}). In case of  $s=s_{0}$, one observes that the corresponding $R$-matrices coincide precisely with well known trigonometric  $R$-matrices of $\fsh$ evaluated in the standard basis. 
\end{proof}

\begin{Corollary} 	

1) $\mathcal{U}(\widehat{\frak{g}})\cong \fsh$.

2) For every $n$ the map: 
$$
 \C^{2}(a_1) \otimes\cdots
\otimes \C^{2}(a_n) \to K_{\bT}(\N(n)^{\bA})
$$
sending a standard basis vector $e_{{\bf p}}$ to the class of the corresponding fixed point in~$K_{\bT}(\N(n)^{\bA})$,
is an isomorphism of $\fsh$-modules. In particular $\mathbb{F}(a)=\C^{2}(a)$.
\end{Corollary}	
\begin{proof}
The RTT procedure constructs the corresponding quantum groups from $R$-matrices. Thus, 1) follows from the previous proposition. The action of $\fsh$ in $\C^{2}(a_1) \otimes\cdots
\otimes \C^{2}(a_n)$ in the standard basis coincides with its action on $K_{\bT}(\N(n)^{\bA})$ in the basis of fixed points because the corresponding $R$-matrices coincide in these bases.  
\end{proof}

\begin{Definition}
In this article we fix an isomorphism of $\fsh$-modules	
$$
K_{\bT}(\N(n))_{loc}=\C^{2}(a_1) \otimes\cdots
\otimes \C^{2}(a_n) 
$$
given by the stable envelope map 
\be \label{fisom}
\textrm{Stab}_{\mathrm{id}}^{s_{0}}:\ \ K_{\bT}(\N(n)^{\bA}) \longrightarrow K_{\bT}(\N(n)). 
\ee
where $\mathrm{id}$ denotes the trivial permutation and $s_{0}$
is a slope from anticanonical alcove as in Proposition \ref{procan}. 
\end{Definition}
{ \bf In particular, the standard basis of the Hilbert space of 
$XXZ$ spin chain $\C^{2}(a_1) \otimes\cdots
\otimes \C^{2}(a_n)$ is identified with the basis of K-theoretic stable envelope classes of the corresponding fixed points. }

The fixed isomorphism of modules is not canonical. It requires  a choice of chamber and slope. In the next subsection we discuss generic choices.

\subsection{Other choices of chambers and slopes} 
Let us choose a generic chamber $\sigma$ and consider an isomorphism 
$$
\textrm{Stab}_{\sigma}^{s_{0}}:\ \ K_{\bT}(\N(n)^{\bA}) \longrightarrow K_{\bT}(\N(n))_{loc}
$$
instead of the isomorphism (\ref{fisom}). Two identifications are related by the operator
$$
{\mathcal{R}}_{\sigma}: \C^{2}(a_1) \otimes\cdots
\otimes \C^{2}(a_n) \longrightarrow \C^{2}(a_{\sigma(1)}) \otimes\cdots
\otimes \C^{2}(a_{\sigma(n)})
$$
given by
$$
{\mathcal{R}}_{\sigma}=(\textrm{Stab}^{s_{0}}_{\sigma})^{-1} \circ \textrm{Stab}^{s_{0}}_{\mathrm{id}}.
$$  
If $\sigma=s_{l_1}\cdots s_{l_m}$ is the decomposition into a product of transpositions then, by the definitions of the geometric $R$-matrices ${\mathcal{R}}_{\sigma}={\mathcal{R}}_{l_1,l_1+1}\cdots {\mathcal{R}}_{l_m,l_{m}+1}$ so that the result does not depend on the choice of the decomposition due to QYBE (\ref{qybe}). We conclude that the different choices of the chambers correspond simply to different orderings of tensor factors in the tensor product 
of evaluation modules. 
 The explicit isomorphism between different tensor products of evaluation modules are given by $\fsh$ $R$-matrices, as usual.

Second, let us discuss changing the slope.  Let $\mathcal{O}(m)$ be the operator be an operator acting in $K_{\bT}(\N(n))$ by classical tensor product $\gamma \to \mathcal{O}(m)\otimes \gamma$.  The following Proposition follows directly from the definition of K-theoretic stable envelope.  	
\begin{Proposition}\cite{os}
Let $s=\epsilon [\mathcal{O}(1)]$ with $\epsilon \in (m-1,m)$ for $m\in \Z$, then
$$
\mathrm{Stab}^{s}_{\sigma}(\gamma)=\mathcal{O}(m)^{-1}\circ \mathrm{Stab}^{s_{0}}_{\sigma}(\gamma) \circ \mathcal{O}(m)
$$ 	
where $s_0$ is as in Proposition \ref{procan}. 
\end{Proposition}

We conclude that the actions of $\fsh$ on $K_{\bT}(\N(n))$ corresponding to different choices of slopes are related by an internal automorphisms of this representation given by operators of classical multiplication by the line bundles $\mathcal{O}(m)$. Computing the action of this automorphism on the generators of $\fsh$ one can find that 
conjugation by $\mathcal{O}(m)$ increases the loop index of Drinfeld's generators of $\fsh$ by $m$ units, see (\ref{ocong}) below. This means that these automorphisms act as the lattice elements of the quantum affine Weyl group 
of $\fsh$, the well known automorphisms of this algebra.  

\subsection{Description of $\fsh$ through generators and relations}
Let us recall that the algebra $\fsh$ with zero central charge is an associative algebra with $1$ generated over $\C(\hbar^{1/2})$ by elements
$E_{k},F_{k}, H_{m},K$ ($k\in \Z$, $m \in \Z \setminus \{0\}$) satisfying the following relations:
\be
\begin{array}{l} \label{sl2rel}
	K K^{-1}=K^{-1} K=1\\
	{[}H_{m},H_n{]}={[}H_m,K^{\pm 1}{]}=0\\
	K E_{m} K^{-1}=\hbar E_{m}, K F_{m} K^{-1}=\hbar^{-1 } F_{m}\\
	{[}E_m, F_l{]}=\dfrac{\psi^{+}_{m+l}-\psi^{-}_{m+l}}{\h^{1/2}-\h^{-1/2}} \\
	{[} H_{k}, E_{l} {]}=\dfrac{{[}2k{]}_{\h^{1/2}}}{k} E_{k+l}, \,
	{[} H_{k}, F_{l} {]}=-\dfrac{{[}2k{]}_{\h^{1/2}}}{k} F_{k+l},
\end{array}
\ee
where $[n]_{\h}=\frac{\h^n-\h^{-n}}{\h-\h^{-1}}$ and
$$
\begin{array}{l}
\sum\limits_{m=0}^{\infty} \psi^{+}_m z^{-m}= K \exp\Big( (\h^{1/2}-\h^{-1/2})\sum\limits_{k=1}^{\infty} H_{k} z^{-k} \Big)\\
\sum\limits_{m=0}^{\infty} \psi^{-}_{-m} z^{m}= K^{-1} \exp\Big( -(\h^{1/2}-\h^{-1/2})\sum\limits_{k=1}^{\infty} H_{-k} z^{k} \Big).
\end{array}
$$

\subsection{Construction of the action}
The explicit action of the above generators on $K_{\bT}(\N(n))$ can be computed (see e.g. \cite{DF}) by $RTT$-procedure from the Gauss decomposition of geometric R-matrices.\footnote{\rd 
Such relation between the Drinfeld generators and the geometric R-matrices was also discussed in Sections 7.2.1-7.2.2 of \cite{os}. The action of Drinfeld generators described by formulas (\ref{secform}) below is obtained from equality between the geometric R-matrix and the  universal R-matrix in the given representations.}
An equivalent approach was suggested earlier by Vasserot \cite{vasserot} and Nakajima  \cite{nak}.  Though we do not need the explicit formulas 
for the action of generators of $\fsh$ on the K-theory,
it will be convenient to recall these formulas here for future references.

Let $\Gr(k,n)$ be the Grassmannian of $k$-planes in  $\C^n$. We think of it as the zero section of the cotangent bundle $\Gr_{k,n}\subset \N_{k,n}$.
We set $\G^{k}_{k+1}=\Gr_{k,n}\times \Gr_{k+1,n}$ and  $\fM^{k}_{k+1} = T^{*} \G^{k}_{k+1} = \N_{k,n}\times \N_{k+1,n}$.
Let $\pi_1$, $\pi_2$ be natural projections from $\fM^{k}_{k+1}$ to the first and second factor respectively.
We consider a $\textrm{GL}(n)$-orbit in $\G^{k}_{k+1}$:
$$
\Orb^{k}_{k+1}=\{V_1 \times V_2 \in \G^{k}_{k+1} : \dim V_1=k, \ \ \dim V_2=k+1, \ \  V_1 \subset V_2  \}
$$
Let $\fB^{k}_{k+1}$ be a Lagrangian subvariety of $\fM^{k}_{k+1}$ given by the conormal to $\Orb^{k}_{k+1}$. As in previous sections
we set $\tb_1, \tb_2$ to be the tautological bundles on  $\N_{k,n}$ and $\N_{k+1,n}$ and $\tw$ to be a trivial rank $n$ bundle on these varieties.
We define the set of bundles $e_{e},f_{r} \in K_{\bT}(\fB^{k}_{k+1})$ labeled by $r \in \Z$:
\be \nonumber
\begin{array}{l}
	e_r=(-1)^{\textrm{rk}(\tw)+1} (\tb_2/\tb_1)^{r+\textrm{rk}(\tw)-2 \textrm{rk}(\tb_1)} \otimes \h^{- \textrm{rk} (\tb_1)/2}, \\
	f_r=\dfrac{\det \tb_1^2}{\det \tw} \otimes (\tb_2/\tb_1)^{r}\otimes \h^{(\textrm{rk}(\tw)-2 \textrm{rk}(\tb_1) -1)/2},
\end{array}
\ee
where $\h$ stands for trivial line bundle with the corresponding equivariant structure.
These line bundles define the correspondences $E_r,F_r \in \textrm{End}(K_{\bT}(\N(n)))$ acting by rising and lowering the Grassmannian index $k$:
$$
K_{\bT}(\N_{k+1,n}) \stackrel{E_r}{ \longrightarrow } K_{\bT}(\N_{k,n}), \ \ \
K_{\bT}(\N_{k,n}) \stackrel{F_r}{ \longrightarrow } K_{\bT}(\N_{k+1,n}).
$$
Explicitly, these operators are defined by:
\be \label{efdef}
\begin{array}{l}
	E_{r}(\alpha)= \pi_{1,*}(\pi^{*}_{2}(\alpha) \otimes e_r) \in K_{\bT}(\N_{k-1,n}), \\
	\\ F_{r}(\alpha)= \pi_{2,*}(\pi^{*}_{1}(\beta) \otimes f_r) \in K_{\bT}(\N_{k+1,n})
\end{array}
\ee
for a class $\alpha \in  K_{\bT}(\N_{k,n})$. Let us further consider the following complex of equivariant bundles on $\N_{k,n}$:
$$
C^{\bullet} = \h  \tb \rightarrow \tw \rightarrow \h^{-1} \tb^{\vee}
$$
and define
$$
\psi^{+}(z)=K \Ld_{z^{-1}}( C^{\bullet}), \ \ \psi^{-}(z)=K^{-1} \Ld_{z}( C^{\bullet *}),  \ \ \textrm{with}  \ \ K=\h^{(rk(\tb^{\vee}) -rk(\tb))/2}.
$$
These classes determine a map 
\be \label{psidef}
K_{\bT}(\N_{k,n}) \stackrel{\psi^{\pm}(z)}{ \longrightarrow } K_{\bT}(\N_{k,n}), \ \ \ \alpha \mapsto \psi^{\pm}(z)\otimes \alpha.
\ee
The relations (\ref{sl2rel}) for the constructed generators are checked in Section 11.4 
	of \cite{nak} by an explicit computation. A similar calculation can be found in an earlier work \cite{vasserot}).
Thus we have the following Proposition. 
\begin{Proposition}
	The operators $E_r,F_r, \psi^{\pm}(z)$ defined explicitly by (\ref{efdef}) and (\ref{psidef}) satisfy the relations (\ref{sl2rel}) and thus generate the structure of $\fsh$-module on $K_{\bT}(\N_{k,n})$. 	
\end{Proposition}

Such explicit formulas for the action of $\fsh$ on $K_{\bT}(\N(n))$ can be computed from above definitions as a simple exercise on localization in equivariant $K$-theory. For future references we give these formulas here:
\begin{Proposition}
	In the basis of fixed points $\O_{\fp}$ of $K_{\bT}(\N(n))$  for  $\fp=\{i_1,\cdots,i_k\}\subset \fn=\{1,2,\cdots,n\}$ the action of the above generators is described explicitly by the following formulas:
	\be
	\label{secform}
	\begin{array}{l}
		K ({\mathcal{O}}_{\fp} ) = \h^{(n-2 |\fp|)/2} {\mathcal{O}}_{\fp}, \\
		\\
		H_{m} ({\mathcal{O}}_{\fp} ) = \dfrac{[m]_\mathfrak{\h^{1/2}}}{m} \Big(  \h^{-m/2} \sum\limits_{i \in \fn\setminus \fp} a_i^{-m} - \h^{m/2} \sum\limits_{i \in \fp} a_i^{-m} \Big) {\mathcal{O}}_{\fp},\\
		\\
		E_{r} ({\mathcal{O}}_{\fq} ) = \sum\limits_{s \in \fq}\, \left(a^{-r-1}_s \,
		\dfrac{ \prod\limits_{j \in \fq  \setminus \{s\} }\, (a_j- \h a_s)}{
			\prod\limits_{j \in \fn \setminus \fq}\, (a_j-a_s)}
		
		\right)\, {\mathcal{O}}_{\fq\setminus \{s\}},\\
		\\
		F_r ({\mathcal{O}}_{\fp}) = \sum\limits_{s \in \fn \setminus \fp}\,\left( \h^{(n-2 k -1)/2} a_s^{-r+1} \dfrac{\prod\limits_{j \in \fn\setminus (\fp\cup \{s\})}\,(a_j-a_s \h^{-1} )}{\prod\limits_{j \in \fp}\,(a_j-a_s)} \right) \, {\mathcal{O}}_{\fp \cup \{s\}},
	\end{array}
	\ee
	where $[n]_\h=\frac{\h^{n}-\h^{-n}}{\h-\h^{-1}}$.
\end{Proposition}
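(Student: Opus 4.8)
The plan is to reduce every assertion to equivariant localization in the fixed-point basis $\{\O_\fp\}$ and to carry out the resulting residue computations on $\N(n)$ and on the Hecke correspondences. Recall that for an isolated $\bT$-fixed point $\fp\in\N_{k,n}^{\bT}$ one has $\O_\fp|_{\fp'}=\delta_{\fp\fp'}\,\Lambda^\bullet\big(T^*_\fp\N_{k,n}\big)$ in localized $K$-theory, and that the tautological bundles restrict as $\tb|_\fp=\sum_{i\in\fp}a_i$, $\tw|_\fp=\sum_{i=1}^n a_i$. All four formulas then follow by substituting these restrictions into the geometric definitions of $K$, $\psi^{\pm}(z)$, $E_r$, $F_r$ given above and simplifying.

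First I would dispose of the ``diagonal'' operators $K$, $\psi^{\pm}(z)$ and $H_m$, which act by tensor multiplication by explicit tautological classes and hence are automatically diagonal in the fixed-point basis. Restricting the complex $C^{\bullet}=\hbar\,\tb\to\tw\to\hbar^{-1}\tb^\vee$ to $\fp$ and using $\psi^{+}(z)|_\fp=K|_\fp\cdot\Lambda^\bullet_{z^{-1}}(C^{\bullet})|_\fp$ turns the $\lambda$-class into a finite product of elementary factors indexed by $i\in\fp$ and $i\in\fn\setminus\fp$; the power of $\hbar$ in $K|_\fp$ is dictated by the rank of $C^{\bullet}$ and equals $\hbar^{(n-2|\fp|)/2}$. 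To extract $H_m$ I would feed this into the defining identity $\sum_{m\ge0}\psi^{+}_m z^{-m}=K\exp\!\big((\hbar^{1/2}-\hbar^{-1/2})\sum_{k\ge1}H_k z^{-k}\big)$: taking the logarithm converts the product $\Lambda^\bullet_{z^{-1}}(C^\bullet)|_\fp$ into a sum of power sums in the $\bT$-weights, and comparing coefficients of $z^{-m}$ yields precisely $\tfrac{[m]_{\hbar^{1/2}}}{m}\big(\hbar^{-m/2}\sum_{i\in\fn\setminus\fp}a_i^{-m}-\hbar^{m/2}\sum_{i\in\fp}a_i^{-m}\big)$. This is a routine, if slightly lengthy, symmetric-function manipulation.

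The substantial part is the computation of $E_r$ and $F_r$, convolution operators through the Hecke correspondence $\fB^{k}_{k+1}\subset\fM^{k}_{k+1}=\N_{k,n}\times\N_{k+1,n}$, the conormal to the $GL(n)$-orbit $\Orb^{k}_{k+1}$ of incident pairs. One first checks (or cites from the construction of $\fB^{k}_{k+1}$) that the projections $\pi_1,\pi_2$ restricted to $\fB^{k}_{k+1}$ are proper, so the $K$-theoretic localization formula applies; its $\bT$-fixed locus is the finite set of isolated points indexed by pairs $(\fp,\fp\cup\{s\})$, $s\in\fn\setminus\fp$. For $F_r(\O_\fp)$, $|\fp|=k$, I would write $F_r(\O_\fp)=\sum_{s\notin\fp}c_{\fp,s}\,\O_{\fp\cup\{s\}}$ with $c_{\fp,s}=F_r(\O_\fp)|_{\fp\cup\{s\}}\big/\Lambda^\bullet\big(T^*_{\fp\cup\{s\}}\N_{k+1,n}\big)$, and evaluate the numerator by localizing $\pi_{2,*}\big(\pi_1^{*}(\O_\fp)\otimes f_r\big)$ at the unique fixed point of $\fB^{k}_{k+1}$ over $(\fp,\fp\cup\{s\})$. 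The two ingredients are: (i) the restriction of $f_r$ (resp. $e_r$) at that point, involving only $\tb_2/\tb_1\mapsto a_s$, $\det\tb_1\mapsto\prod_{i\in\fp}a_i$, $\det\tw\mapsto\prod_i a_i$ together with the prescribed half-integer $\hbar$-twists; and (ii) the $\bT$-weights of $T\fB^{k}_{k+1}$ at that point, splitting as $T\Orb^{k}_{k+1}\oplus\hbar^{-1}(T\Orb^{k}_{k+1})^\vee$, the orbit directions being read off from the incidence-variety structure (base $\Gr$ directions $a_j/a_i$ with $i\in\fp$, plus fiber directions $a_j/a_s$, $j\notin\fp\cup\{s\}$, and $a_s/a_i$, $i\in\fp$). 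After the directions common to $\N_{k,n}$ and $\N_{k+1,n}$ cancel, what survives in $c_{\fp,s}$ are exactly $\prod_{j\in\fn\setminus(\fp\cup\{s\})}(a_j-a_s\hbar^{-1})\big/\prod_{j\in\fp}(a_j-a_s)$, the monomial $a_s^{-r+1}$ from $f_r$, and an overall $\hbar^{(n-2k-1)/2}$; the $E_r$ computation is symmetric and gives the companion formula.

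The main obstacle, as usual in such $K$-theoretic Hecke computations, is the bookkeeping of signs and half-integer powers of $\hbar$ when combining the square-root $\hbar$-twists built into $e_r,f_r$ with the weights of the conormal (cotangent-fibre) directions of $\fB^{k}_{k+1}$, and pinning down the precise normalization of $\O_\fp$ (bare structure sheaf versus a twist). A clean way to discharge this is to use the Theorem already established above, which asserts that $K,E_r,F_r,\psi^{\pm}(z)$ define an $\fsh$-module structure on $K_{\bT}(\N(n))$: since $K$ and $\psi^{\pm}$ are diagonalized in the fixed-point basis, the defining relations of $\fsh$ rigidly constrain the remaining matrix coefficients of $E_r,F_r$, so that matching normalizations in one small case (say $n=1$ and $n=2$) suffices to fix them in general and to confirm the displayed formulas.
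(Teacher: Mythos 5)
Your approach coincides with the paper's: the paper offers no proof of this Proposition beyond the remark that it ``can be computed as a simple exercise on localization in equivariant $K$-theory,'' and your proposal is precisely that exercise --- restriction of $C^{\bullet}$ to $\fp$ and logarithmic expansion of the $\lambda$-class to read off $K$, $\psi^{\pm}(z)$ and $H_m$, and localization on the Hecke correspondence $\fB^{k}_{k+1}$, whose $\bT$-fixed points are the pairs $(\fp,\fp\cup\{s\})$, to compute the matrix coefficients of $E_r$ and $F_r$. The only shaky point is your closing shortcut: the relations of $\fsh$ determine $E_r,F_r$ only up to independent rescalings of the basis vectors $\O_{\fp}$ (an $n$-dependent, $2^n$-parameter ambiguity), so verifying $n=1,2$ does not by itself fix the normalization for all $n$; the direct weight computation you outline first is what actually has to be carried to completion.
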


\noindent
{\bf Note:}
Let us stress here that (\ref{secform}) describes the action 
of $\fsh$ on $K_{\bT}(\N(n))$ in the basis given by classes of fixed points. Note, in particular, that the Cartan generators $H_{m}$ are diagonal in this basis. 
This basis is related to the standard basis of 
$\C^{2}(a_1)\otimes\cdots\otimes\C^{2}(a_n)$ by the $K$-theoretic stable envelope map (\ref{fisom}). 

The operators of classical multiplication by $K$-theory classes are diagonal in the basis of fixed points. In particular, ${\mathcal{O}}(1)\otimes {\mathcal{O}}_{\fp}=(\prod\limits_{i\in \fp} a_i ){\mathcal{O}}_{\fp}$. Formulas (\ref{secform})  imply that
\be \label{ocong} \ \ \ \ \ \ 
{\mathcal{O}}(1) E_{r} {\mathcal{O}}(1)^{-1}=E_{r+1}, \ \ \   {\mathcal{O}}(1) F_{r} {\mathcal{O}}(1)^{-1}=F_{r-1}, \ \ \ {\mathcal{O}}(1) H_{r} {\mathcal{O}}(1)^{-1}=H_{r}.
\ee
This means that conjugation by  ${\mathcal{O}}(1)$ acts on generators in this representation as a lattice element of the affine Weyl group.

\subsection{Other choices of generators}
In the previous section we constructed the action of  $\fsh$ on $K_{\bT}(\N_{k,n})$ in its Drinfeld realization. In this section we give a different description. Quantum affine algebra $\fsh$ can be constructed via the following Chevalley generators: $e_{\pm\al}$, $e_{\pm(\de-\al)}$, $K^{\pm 1}_{\alpha}={\h}^{h_{\alpha}/2}$, $k^{\pm 1}_{\de-\al}={\h}^{h_{\de-\al}/2}$, satisfying the commutation relations
\begin{eqnarray}
k_{\gamma}k_{\gamma}^{-1}&\!\!=\!\!&
k_{\gamma}^{-1}k_{\gamma}^{}=1~,\qquad\quad\;
[k_{\gamma}^{\pm 1},k_{\gamma'}^{\pm 1}]=0~,\qquad
\nonumber\\
k_{\gamma}^{}e_{\pm\alpha}^{}k_{\gamma}^{-1}&\!\!=\!\!&
{\h}^{\pm(\gamma,\alpha)/2}e_{\pm\alpha}~,
\qquad k_{\gamma}^{}e_{\pm(\delta-\alpha)}k_{\gamma}^{-1}=
{\h}^{\pm(\gamma,\delta-\alpha)/2}e_{\pm(\delta-\alpha)},\nonumber
\end{eqnarray}
\begin{eqnarray}
[e_{\alpha},e_{-\delta+\alpha}]&\!\!=\!\!&0~,\qquad\qquad\qquad\quad
[e_{-\alpha},e_{\delta-\alpha}]=0~,
\\[7pt]
[e_{\alpha},e_{-\alpha}]&\!\!=\!\!&[h_{\alpha}]_{\sqrt{\h}}~,\qquad\qquad\;\;\,
[e_{\delta-\alpha},e_{-\delta+\alpha}]=[h_{\delta-\alpha}]_{\sqrt{\h}}~,\nonumber
\end{eqnarray}
\begin{eqnarray}\nonumber
[e_{\pm\alpha}^{},[e_{\pm\alpha}^{},[e_{\pm\alpha}^{},
e_{\pm(\delta-\alpha)}^{}]_{\sqrt{\h}}]_{\sqrt{\h}}]_{\sqrt{\h}}&\!\!=\!\!&0~,\quad\;\;
\\\nonumber
[[[e_{\pm\alpha},e_{\pm(\delta-\alpha)}]_{\sqrt{\h}},e_{\pm(\delta-\alpha)}]_{\sqrt{\h}},
e_{\pm(\delta-\alpha)}]_{\sqrt{\h}}&\!\!=\!\!&0~,\quad\;\;
\end{eqnarray}
where ($\gamma\!=\!\alpha,\delta-\alpha$),  ($\alpha, \alpha$)=$-$($\delta-\al$, $\al$) and
$[h_\beta]_{\sqrt{\h}}\!:=\!(k_\beta\!-\!k_\beta^{-\!1})/(\sqrt{\h}\!-\!\sqrt{\h}^{-\!1})$.
The brackets $[\cdot,\cdot]$ and $[\cdot,\cdot]_{h}$ are the ${\h}$-commutator:
\begin{eqnarray}
[e_{\beta}^{},e_{\beta'}^{}]_{\sqrt{\h}}&\!\!=\!\!&e_{\beta}^{}e_{\beta'}^{}-
{\h}^{(\beta,\beta')/2}e_{\beta'}^{}e_{\beta}^{}~.
\end{eqnarray}
From now on for simplicity, we assume that ($\alpha, \alpha$)=2. It is important to mention that Serre relations remain intact under the transformation $\h\to \h^{-1}$.

The Hopf algebra structure on $\mathcal{U}_{\h}(\widehat{\mathfrak{sl}}_2)$ is given by the following coproduct $\Delta_{\sqrt{\h}}$ and antipode $S_{\sqrt{\h}}$:
\begin{eqnarray}
\begin{array}{rcccl}
\Delta_{\sqrt{\h}}(k_\gamma^{\pm 1})&\!\!=\!\!&k_\gamma^{\pm 1}\otimes k_\gamma^{\pm 1}~,
\qquad\qquad\qquad S_{\sqrt{\h}}(k_\gamma^{\pm 1})&\!\!=\!\!&k_\gamma^{\mp 1}~,
\\[7pt]
\Delta_{\sqrt{\h}}(e_{\beta}^{})&\!\!=\!\!&e_{\beta}^{}\otimes 1
+ k_{\beta}^{-1}\otimes e_{\beta}^{}~,
\qquad\quad S_{\sqrt{\h}}(e_{\beta}^{})&\!\!=\!\!&-k_{\beta}e_{\beta}^{}~,
\\[7pt]
\Delta_{\sqrt{\h}}(e_{-\beta}^{})\!\!&=\!\!&
e_{-\beta}^{}\otimes k_{\beta}+1 \otimes e_{-\beta}^{}~,
\qquad S_{\sqrt{\h}}(e_{-\beta}^{})&\!\!=\!\!&-e_{-\beta}^{}k_{\beta}^{-1}~,
\end{array}
\end{eqnarray}
where $\beta=\alpha,\,\delta-\alpha$.\\
Notice, that our definition corresponds to the standard one (see e.g. \cite{ktolst}) when $\sqrt{\h}=q$, standard deformation parameter for quantum groups.

The relation between the Chevalley relaization and the Drinfeld one is given via the following formulas:
\be \label{drinf}
&&e_{\al_0}=F_0K^{-1},\quad e_{\al_1}=E_{-1},\quad
e_{-\al_0}= K E_0,\quad
e_{-\al_1}=F_{1} ,\\
&&k_{\al_1}=k_{\al_0}^{-1}=K.\nonumber
\ee

The celebrated universal R-matrix is an element in the tensor product $\bf{b}_{+}\otimes \bf{b}_{-}$, where $\bf{b}_{\pm}$ are upper and lower Borel subalgebras of $\mathcal{U}_{{\h}}(\widehat{\mathfrak{sl}}_2)$, which satisfies the following relations with respect to the coproduct $\Delta_{{\sqrt{\h}}}$ and opposite coproduct $\tilde{\Delta}_{\sqrt{\h}}=\sigma\Delta_{_{\sqrt{\h}}}$ and $\sigma(a \otimes b)=b\otimes a$:
\begin{eqnarray}
&&\tilde{\Delta}_{\sqrt{\h}}(a)\!\!=\!\!R\Delta_{\sqrt{\h}}(a)R^{-1} \qquad\quad\;\;
\forall\,\,a \in U_{\sqrt{\h}}(\widehat{\mathfrak{sl}}_2)~,
\nonumber\\
&&(\Delta_{\sqrt{\h}}\otimes{\rm id})R\!\!=\!\!R^{13}R^{23}~,\qquad
({\rm id}\otimes\Delta_{\sqrt{\h}})R=R^{13}R^{12}.
\end{eqnarray}
The relations above can be understood in the following way:
$R^{12}=\sum a_{i}\otimes b_{i}\otimes{\rm id}$,
$R^{13}=\sum a_{i}\otimes{\rm id}\otimes b_{i}$,
$R^{23}=\sum {\rm id}\otimes a_{i}\otimes b_{i}$ if $R$ has the form
$R=\sum a_{i}\otimes b_{i}$ For more information on the structure of the R-matrix, see Appendix.

\section{Representation theory of $\fsh$, spin chains and Baxter Q-operator}

\subsection{$Q$-operator from oscillator  representations.} 
In this section we describe the oscillator representations of $\bf{b}_-\subset \fsh$ (now known as prefundamental representations \cite{fh}) that serve as building blocks for evaluation modules, defined in this section as well. Namely, all finite-dimensional evaluation representations of $\bf{b}_-$ (see below) can be reproduced within the Grothendieck ring generated by prefumdamental representations, as we will see below. 

Using that, we define transfer matrices and Baxter Q-operators following \cite{blz} (see also \cite{qaf}, \cite{qrw}, \cite{qk}, \cite{qdm}, \cite{qdkk}).

First, we introduce the deformed oscillator algebra $\mathcal{B}$ with the generators $H, \mathcal{E}_{\pm}$:
\begin{equation}
{\h}^{\frac{1}{2}} \mathcal{E}_+\mathcal{E}_- - {\h}^{-\frac{1}{2}}\mathcal{E}_-\mathcal{E}_+=\frac{1}{{\h}^{\frac{1}{2}}-{\h}^{-\frac{1}{2}}},\quad  [H, \mathcal{E}_{\pm}]=\pm 2\mathcal{E}_{\pm}.
\end{equation}
The algebra  $\mathcal{B}$ has the following Fock space representations:
\begin{equation}
\mathbf{F}_{\pm}=
\{{\rm span}\{\mathcal{E}^k_{\mp}|0\rangle_{\pm}\};~ \mathcal{E}_{\pm}|0\rangle_{\pm}=0,~ H|0\rangle_{\pm}=0\}
\end{equation}

The representations of the lower Borel subalgebra ${\bf b}_{-}$ of $\mathcal{U}_{{\h}}(\widehat{\mathfrak{sl}}_2)$, which we are interested in, 
can be described by the following homomorphisms $\rho_{\pm}(x): \mathbf{b}_-\to\mathcal{B}$:
\begin{equation}
 \rho_{\pm}(x): \left\{
    \begin{array}{rl}
      &h_{\alpha}\to \pm H,\\
      &h_{\delta-\alpha}  \to \mp H,\\
      &e_{-\al}  \to \mathcal{E}_{\mp},\\
      &e_{\al-\de} \to x\mathcal{E}_{\pm},
    \end{array} \right.
\end{equation}
The above homomorphisms thus automatically produce representations of $\mathbf{b}_-$ on $\mathbf{F}_{\pm}$. In the following we will refer to the corresponding representations as $\rho_{\pm}(x)$ as well.

We are interested in the decomposition of the tensor product
\begin{equation}
\rho_{-}(x{\h}^{-\frac{n+1}{2}})\otimes \rho_{+}(x{\h}^{\frac{n+1}{2}})\label{prodqosc}
\end{equation}
for $n\in \mathbb{Z}$.
In order to describe the components of the decomposition of this tensor product in the Grothendieck ring we need to introduce the evaluation representations $\pi_n^+(x)$ of ${\bf b}_{-}$, associated with 
Verma modules of $\mathcal{U}_{{\h}}(\mathfrak{sl}_2)$, constructed as follows.  
First, let us consider the {\it evaluation} homomorphisms $\pi(x): {\bf b}_{-}\to \mathcal{U}_{{\h}}(\mathfrak{sl}_2)$: 
\begin{eqnarray}
 \pi(x): \left\{
    \begin{array}{rl}
      &h_{\alpha} \to \pm \mathcal{H},\\
      &h_{\delta-\alpha}\to \mp \mathcal{H},\\
      &e_{-\al}  \to \mathcal{F},\\
      &e_{\al-\de} \to x\mathcal{E},
    \end{array} \right.
\end{eqnarray} 
where the generators $\mathcal{E}, \mathcal{F}, \mathcal{H}$ of $\mathcal{U}_{{\h}}(\mathfrak{sl}_2)$ satisfy standard commutation relations:
$
[\mathcal{E}, \mathcal{F}]=\frac{q^\mathcal{H}-q^\mathcal{-H}}{q-q^{-1}}.
$
Thus, given the representation $V$ of $\mathcal{U}_{{\h}}(\mathfrak{sl}_2)$, one can produce representation $\pi_V$ for  ${\bf b}_{-}$. The representations of special interest are Verma modules $V_n^+=\{{\rm span}(v_k=\mathcal{F}^kv_0);~\mathcal{E}v_0=0, \mathcal{H}v_0=nv_0\}$,  
where the action of generators is given by the following formulas:
\begin{eqnarray}
\label{pin}
&&\mathcal{H}v_k=(n-2k)v_k,\nonumber\\
&&\mathcal{F}v_k=v_{k+1}, \\
&&\mathcal{E}v_k=[k]_{\sqrt{\h}}[n-k+1]_{\sqrt{\h}}v_{k-1}.\nonumber
\end{eqnarray}
We also are be interested in the finite-dimensional modules $V_n\equiv V_n^+/V^+_{n-2}$. We will refer to the corresponding evaluation modules of $\mathbf{b}_-$ as $\pi^{+}_{n}(x)$ and $\pi_{n}(x)$ correspondingly. We note here, that evaluation homomorphism can be extended to the full algebra $\fsh$ and the modules $\pi^{+}_{n}(x), \pi_{n}(x)$ serve as representations for the full algebra $\fsh$. 

Let us also denote 1-dimensional representations of ${\bf b}_-$ with eigenvalue of $h_{\alpha}$ equal to $s$ as $\omega_{s}$. It is clear that $\omega_s\otimes \omega_{s'}=\omega_{s+s'}$.
Then we have the following Proposition \cite{blz}.
\begin{Proposition}
Decomposition of the product \rf{prodqosc} in the Grothendieck ring of ${\bf b}_{-}$ is given by the following expresion:
\begin{eqnarray}
\rho_{-}(x\h^{-\frac{n+1}{2}})\cdot \rho_{+}(x{\h}^{\frac{n+1}{2}})=\omega_{-n}(1-\omega_{-2})\inv\pi_n^{+}(x),\label{grot}
\end{eqnarray}
where $(1-\omega_{-2})^{-1}$ is understood as the geometric series expansion.
\end{Proposition}
\begin{proof} Here we give a sketch of the proof, for the details we refer to the paper \cite{blz}. Let us write  the coproduct of the generators of $\bf{b}_-$ in the tensor product:
\begin{eqnarray}
&&\bar{\mathcal{H}}\equiv\Delta_{\rho_-(xy^{-1})\otimes  \rho_-(xy)}(h_{\alpha})=
1\otimes H-H\otimes 1\nonumber\\
&&\bar{\mathcal{F}}\equiv\Delta_{\rho_-(xy^{-1})\otimes  \rho_-(xy)}(e_{-\alpha})=
\mathcal{E}_+\otimes {\h}^{\frac{H}{2}}+1 \otimes \mathcal{E}_{-}=b_++a_+\\
&&\bar{\mathcal{E}}\equiv=x^{-1}\Delta_{\rho_-(xy^{-1})\otimes  \rho_-(xy)}(e_{\alpha-\delta})=
y^{-1}\mathcal{E}_-\otimes {\h}^{-\frac{H}{2}}+1 \otimes y\mathcal{E}_{+}=b_-+a_-,\nonumber
\end{eqnarray}
where we decomposed each of the coproducts into two terms denoted as $b_{\pm}, a_{\pm}$ in the order of their appearance and $y$ stands for $\h^{\frac{n+1}{2}}$. Then the following commutation relations are satisfied:
\begin{eqnarray}
&&a_{\sigma_1}b_{\sigma_2}=\h^{\sigma_1\sigma_2}b_{\sigma_2}a_{\sigma_1},\\
&&{\h}^{\frac{1}{2}} a_-a_+ - {\h}^{-\frac{1}{2}} a_+a_-=\frac{y^2}{{\h}^{\frac{1}{2}}-{\h}^{-\frac{1}{2}}}, \quad
{\h}^{\frac{1}{2}} b_+b_- - {\h}^{-\frac{1}{2}} b_-b_+=\frac{y^{-2}}{{\h}^{\frac{1}{2}}-{\h}^{-\frac{1}{2}}}.
\end{eqnarray}
Then, introducing the basis vectors $|\rho^{(m)}_k\rangle=(a_++b_+)^k
(a_+-\gamma b_+)^m|0\rangle_-\otimes |0\rangle_+$. One can show that for generic
$\gamma$ they span the total space $\rho_{-}(x\h^{-\frac{n+1}{2}})\otimes \rho_{+}(x\h^{\frac{n+1}{2}})$. These vectors are of special nature, namely:
\begin{eqnarray}
&&\bar{\mathcal{H}}|\rho^{(m)}_k\rangle = -2(k+m)|\rho^{(m)}_k\rangle,\nonumber\\
&&\bar{\mathcal{F}}|\rho^{(m)}_k\rangle=|\rho^{(m)}_{k+1}\rangle, \\
&&\bar{\mathcal{E}}|\rho^{(m)}_k\rangle=[k]_{\sqrt{\h}}[n-k+1]_{\sqrt{\h}}|\rho^{(m)}_{k-1}\rangle +
c_k^{m}|\rho^{(m-1)}_{k}\rangle .\nonumber
\end{eqnarray}
We see that up to a shift in $\mathcal{H}$ and extra coefficients $c^m_k$ this gives the decomposition in terms of representations. On the level of the Grothendieck group the
latter problem is irrelevant and the former problem can be corrected by multiplication on 1-dimensional representations of appropriate weight, forming the geometric series.\end{proof}

Now we describe the weighted traces of R-matrices in those representations. First of all, from now on, we assume that whenever we write the trace of the universal R-matrix, which belongs to $\bf{b}_-\otimes \bf{b}_+$ over certain representation of its $\bf{b}_-$ part, its ${\bf b_+}$-part is considered in some finite-dimensional representation $V$, so that the following trace
\begin{eqnarray}
\tilde{Q}_{\pm}(x)=tr_{\rho_{\pm}(x)}\Big[ (I\otimes\rho_{\pm}(x))R(I\otimes Z^{\h_{\alpha}})\Big]
\end{eqnarray}
is well-defined as an element of ${\rm End}(V)[[x, Z^{-1}]]$,  and we will treat it as such in the following (see \cite{fh}).  
However, as we will see in the end of this section, one can also define it analytically for $Re\{\ln(Z)\}>0$ and then analytically continue.

We would like to normalize this operator, so that at $x=0$ it is equal 1, therefore, we have to introduce
\begin{eqnarray}
&&\mathcal{Z}_{\pm}(h_{\alpha})=tr_{\rho_{\pm}(x)}\Big[ (I\otimes\rho_{\pm}(x))\mathcal{K}(I\otimes Z^{\h_{\alpha}})\Big]=\nonumber\\
&&tr_{\rho_{\pm}(x)}\Big[ (I\otimes\rho_{\pm}(x)){\h}^{\frac{h_{\alpha}\otimes h_{\alpha}}{4}}Z^{I\otimes \h_{\alpha}}\Big]=\sum^{\infty}_{s=0}{\h}^{-\frac{h_{\alpha}s}{2}}Z^{-2s}=\frac{1}{1-\h^{-\frac{h_{\alpha}}{2}}Z^{-2}},
\end{eqnarray}
again, understood as an element of ${\rm End}(V)[[ Z^{-1}]]$ so that $Q_{\pm}(x)\equiv\mathcal{Z}_{\pm}^{-1}(h_{\alpha})\tilde{Q}_{\pm}(x)$.

Then we have the following theorem, which is a consequence of the Proposition we proved above. This is the direct analogue of the similar Theorem from \cite{blz} in the case of finite-dimensional representation $V$ of $\mathbf{b}_+$.

\begin{Theorem} The product of two $Q$-operators, acting in certain finite-dimensional representation $V$, gives a trace of the R-matrix in $\pi_n^+(x)$ representation:
\begin{equation}
\h^{\frac{n{h_{\alpha}}}{4}}ZQ_{+}(\h^{\frac{n+1}{2}}x)Q_{-}(\h^{-\frac{n+1}{2}}x)=
tr_{\pi^{+}_n(x)}\Big[ R(I\otimes {Z}^{h_{\alpha}})\Big]\Big (1-\hbar^{-\frac{h_{\alpha}}{2}}Z^{-2}\Big ),
\end{equation}
where the trace in the RHS is understood as an element in ${\rm End}(V)Z^{n}[[Z^{-1},x]]$.
\end{Theorem}
\begin{proof} To prove that, it is enough to see that
\begin{equation}
tr_{\omega_s}\Big[ R(I\otimes {Z}^{h_{\alpha}})\Big]={\h}^{\frac{sh_{\alpha}}{4}}Z^s.
\end{equation}
Then formula \rf{grot} corrected by normalization coefficients gives the result of the Theorem.\end{proof}

Let us introduce the following notation for the following traces, again, understood as power series from ${\rm End}(V)Z^{n}[[Z^{-1},x]]$ for certain finite-dimensional representation $V$ of $\bf{b}_+$:
\begin{eqnarray}
&&T_n^{+}(x)\equiv tr_{\pi^{+}_n(x)}\Big[ R(I\otimes {Z}^{h_{\alpha}})\Big],
\nonumber\\
&&T_n(x)\equiv tr_{\pi_n(x)}\Big[ R(I\otimes {Z}^{h_{\alpha}})\Big]=T_n^{+}(x)-T_{-n-2}^{+}(x).
\end{eqnarray}
There are two direct consequences of the Theorem we proved. One is known as the {\it quantum Wronskian relation} between $Q_{\pm}(x)$-operators, illustrating that they are not independent. The second, known as the Baxter TQ-relation, expresses the dependence of $T_1$ on  $Q_{\pm}$- operators (see e.g. \cite{blz}). 
\begin{Proposition}\label{wr}
Given $Q_{\pm}$, $T_1$-operators acting in certain finite-dimensional representation $V$, the following relations forhold:
\begin{eqnarray}
&&Z{\h}^{\frac{h_{\alpha}}{4}}Q_{+}(\h^{\frac{1}{2}} x)Q_{-}(\h^{-\frac{1}{2}}x)-Z^{-1}{\h}^{-h_{\alpha}/4}Q_{+}(\h^{-\frac{1}{2}}x)Q_{-}(\h^{\frac{1}{2}} x)=Z{\h}^{\frac{h_{\alpha}}{4}}-Z^{-1}{\h}^{-\frac{h_{\alpha}}{4}}\\
&&\nonumber\\
&&T_1(x)Q_{\pm}(x)={\h}^{\pm\frac{h_{\alpha}}{4}}Z^{\pm 1}Q_{\pm}({\h}x)+{\h}^{\mp\frac{h_{\alpha}}{4}}Z^{\mp 1}Q_{\pm}(\h^{-1}x)
\end{eqnarray}

\end{Proposition}
\begin{proof} To prove the first relation it is enough to use  the statement of the Theorem in the case of $n=0$ and use the fact that $T_0(x)=1$. The second follows from the Theorem above, when $n=1$,  and the application of the Wronskian relation.
\end{proof}

\subsection{XXZ spin chains and algebraic Bethe ansatz}
As we already discussed, one can consider the evaluation representations $\pi_k(x)$ for the upper Borel subalgebra $\bf{b}_{+}$ as well, namely,
\begin{eqnarray}
\pi_n(x): e_{\alpha}\to \mathcal{E}, ~ e_{\delta-\alpha}\to x\mathcal{F}, ~ h_{\alpha}\to \mathcal{H},\quad  h_{\delta-\alpha}\to -\mathcal{H},
\end{eqnarray}
where $\mathcal{E},\mathcal{F},\mathcal{H}$ satify the standard commutation relations of $\mathcal{U}_{\sqrt{\h}}(sl(2))$ and their action on the corresponding $n+1$-dimensional module is given by the formulas \rf{pin}.
The normalized universal R-matrix with ${\bf b}_+$ being represented via evaluation homomorphism and ${\bf b}_-$ considered in 2-dimensional representation $\pi_1$, is given by the following expression, see e.g. \cite{kt}:
\begin{eqnarray}
(\pi_n(x)\otimes\pi_1(1))R=\phi_n(x)
\left( {\begin{array}{cc}
{\h}^{\frac{\ch}{4}}-{\h}^{-\frac{1}{2}} x{\h}^{-\frac{\ch}{4}} & (\h^{\frac{1}{2}}-{\h}^{-\frac{1}{2}})\cf{\h}^{-\frac{\ch}{4}}\\
x(\h^{\frac{1}{2}}-\h^{-\frac{1}{2}})\ce{\h}^{\frac{\ch}{4}}&{\h}^{-\frac{\ch}{4}}-{\h}^{-\frac{1}{2}} x{\h}^{\frac{\ch}{4}}
\end{array}}
\right),
\end{eqnarray}
where $\phi_n(x)=\exp(\sum^{\infty}_{l=1}\frac{{\h}^{\frac{n+1}{2}}+{\h}^{-\frac{n+1}{2}}}{1+{\h}^{l}}\frac{x^l}{l}$).
For convenience it is sometimes useful to consider the variable $u$, so that
$u^{-2}=x$. Then the normalized and Cartan-conjugated R-matrix transforms into the following symmetric expression:
\begin{eqnarray}\label{lop}
&&\cl(u)=u{\h}^{\frac{1}{4}}\phi_n(u^{-2})^{-1}u^H\Big[(\pi_1(u^{-2})\otimes\pi_1(1))R\Big]u^{-H}=\nonumber\\
&&\left( {\begin{array}{cc}
uk{\h}^{\frac{1}{4}}-\h^{-\frac{1}{4}}u^{-1}k^{-1}& {\h}^{-1/2}u^{-1}f\\
\h^{-1/2}u^{-1}e&\h^{\frac{1}{4}} uk^{-1}-\h^{-\frac{1}{4}} u\inv k
\end{array}}
\right).
\end{eqnarray}
Here $e=\ce{\h}^{\frac{\ch}{4}}(\h^{\frac{1}{2}}-{\h}^{-\frac{1}{2}})$, $f={\h}^{-\frac{\ch}{4}}\mathcal{F}(\h^{\frac{1}{2}}-{\h}^{-\frac{1}{2}})$, $k={\h}^{\frac{\ch}{4}}$, which satisfy simple commutation relations:
\begin{eqnarray}
ke={\h}^{\frac{1}{2}} ek, \quad kf={\h}^{-\frac{1}{2}}fk,\quad  ef-fe=(\h^{\frac{1}{2}}-{\h}^{-\frac{1}{2}})(k^2-k^{-2})
\end{eqnarray}
The operator $\cl(u)$ is known as the {\it $\mathcal{L}$-operator}  for the XXZ spin chain and the reason is as follows.

Consider the tensor product $\pi_1(\xi_1^2/u^2)\otimes \dots\otimes \pi_1(\xi_n^2/u^2)\otimes \pi_1(1)$, where each of the 2-dimensional evaluation modules is refered as $site$ of the lattice and the last site is considered as auxilliary.
Note that each of $\pi_1(\xi_1^2/u^2)\cong \mathbb{C}^2={\rm span}_{\mathbb{C}}(\nu_0, \nu_1)$, where $\nu_0$ and $\nu_1$ are correspondingly highest and lowest weight vectors with respect to representations of $e,f,k$-algebra.
Then  the {\it monodromy matrix} acting in this space is the following product:
\begin{equation}\label{mon}
\mathcal{T}(u)\equiv\mathcal{L}_1(u/\xi_1)\dots\mathcal{L}_N(u/\xi_n)\left( {\begin{array}{cc}
Z& 0\\
0&Z^{-1}
\end{array}}
\right)
\end{equation}
where the i-th $\cl$-operator acts in the tensor product of the i-th cite and the auxilliary module.  The traces over auxiliary modules commute
\begin{eqnarray}\label{int}
[tr_{\pi_1(1)}\mathcal{T}(u_1), tr_{\pi_1(1)}\mathcal{T}(u_2)]=0
\end{eqnarray}
because of the relation $(\Delta\otimes I)R=R^{13}R^{23}$. Namely, up to multiplication by the function from \rf{lop} $tr_{\pi_1(1)}\mathcal{T}(z)$  coincides with $T_1(x)$ from the previous section, where ${\bf b}_{+}$ is represented in the physical space $\mathbf{H}\equiv\pi_1(\xi_1^2)\otimes \dots\otimes \pi_1(\xi_n^2)$. Then the Yang-Baxter equation $R^{12}R^{13}R^{23}=R^{23}R^{13}R^{12}$ implies the commutativity condition on the traces.

The relation \rf{int} is known as $integrability$ $condition$, and the operator-valued coefficients of the expansion of $\ln (tr\mathcal{T}(u))$ are known as the Hamiltonians of the XXZ model.
The so-called Algebraic Bethe Ansatz provides an effective method for finding eigenvalues and eigenvectors of $tr\mathcal{T}(u)$ and therefore, solve the problem of simultaneous diagonalization of Hamiltonians.

Below we illustrate the key steps of the method (for the details we refer the reader to section 4.2 of \cite{resh}).

It is convenient to write down the expression for $\mathcal{T}(u)$ as a matrix in 
the representation $\pi_1(1)$ with anticommutative coefficients as follows:
\begin{eqnarray}
\mathcal{T}(u)=\left( {\begin{array}{cc}
A(u)& B(u)\\
C(u)&D(u)
\end{array}}
\right),
\end{eqnarray}
so that $tr_{\pi_1(1)}(\mathcal{T}(u))=A(u)+D(u)\in End(\mathbf{H})[u,u^{-1}, Z, Z^{-1}]$.
Then the Yang-Baxter equation produces commutation relations between $A, B$ and $D, B$ for different values of parameter $z$. Let us denote the highest weight vector in the product of our $N$ cites as
\begin{eqnarray}
\Omega_{+}\equiv \nu_0\otimes...\otimes \nu_0
\end{eqnarray}
Then the following Theorem is satisfied, see e.g. \cite{fad}, \cite{resh}.
\begin{Theorem}\label{theig}
Vectors $\{B(v_1)\dots B(v_k)\Omega_{+}\}$ are the eigenvectors of $tr_{\pi_1(1)}(\mathcal{T}(u))$ with eigenvalues
\begin{eqnarray}
\label{transeig}
&&\Lambda(u|\{v_i\},Z)=\nonumber\\
&&\alpha(u)\prod_{i=1}^k\frac{v_i u\inv{\h}^{\frac{1}{2}}-v_i\inv u {\h}^{-\frac{1}{2}}}{v_i u\inv-v_i\inv u }+\delta(u)\prod_{i=1}^k\frac{v_i\inv u{\h}^{\frac{1}{2}}-v_iu\inv {\h}^{-\frac{1}{2}}}{v_i \inv u-v_i u \inv}
\end{eqnarray}
so that
\begin{eqnarray}
\alpha(u)=Z\prod_{i=1}^n(u \xi_i^{-1}{\h}^{\frac{1}{2}}-u\inv \xi_i{\h}^{-\frac{1}{2}}), \quad \delta(u)=Z^{-1}\prod_{i=1}^n(u \xi_i^{-1}-u\inv \xi_i)
\end{eqnarray}
are the eigenvalues of $A(u)$ and $D(u)$ on $\Omega_+$ and parameters $v_i$ are subject to Bethe equations:
\begin{eqnarray}\label{beq}
\prod_{\alpha=1}^n\frac{v_i{\xi_\alpha}^{-1}{\h}^{\frac{1}{2}}-
v_i^{-1}\xi_\alpha {\h}^{-\frac{1}{2}}
}{v_i\xi_\alpha^{-1}-v_i^{-1}\xi_\alpha} =
Z^{-2}\prod_{j=1, i\neq j}^k \frac{v_iv_j^{-1}{\h}^{\frac{1}{2}}-v_i^{-1}v_j{\h}^{-\frac{1}{2}}} {v_iv_j^{-1}{\h}^{-\frac{1}{2}}-v_i^{-1}v_j{\h}^{\frac{1}{2}}}
\end{eqnarray}
\noindent ii) Vectors $\{B(v_1)\dots B(v_k)\Omega_{+}\}$ indexed by the solutions of Bethe equations, span the weight subspace $\mathbf{H}_k\subset\mathbf{H}$ corresponding to the eigenvectors of $\mathcal{H}$ with eigenvalue $n-2k$. 
\end{Theorem}

For our considerations we will redenote $\xi^2_{\alpha}\equiv a_{\alpha}$, $v_i^{2}=s_i$, and we remind that $u^{-2}=x$ so that we can rewrite

\begin{eqnarray}
&&tr\mathcal{T}(u)=\nonumber\\
&&\h^{n\over 4}
\prod_{i=1}^{n}\frac{u^n}{a_i^{n/2}}
\Bigg[
Z\h^{h_{\alpha}\over 4}\prod_{i=1}^ng_i(x{\h}^{-1})\frac{Q({\h}x)}{Q(x)}+  Z\inv {\h}^{-h_{\alpha}\over 4}\prod_{i=1}^ng_i(x)\frac{Q({\h}^{-1}x)}{Q(x)}\Bigg],
\end{eqnarray}
where
\begin{eqnarray}
g_i(x)=(1-a_ix)
\end{eqnarray}
and the eigenvalues of the polynomial operator $Q(x)$ on  $P(s_1,\dots, s_k)\equiv B(v_1)\dots B(v_k)\Omega$ are given by
\begin{eqnarray}
\prod_{i=1}^k(1-x s_i)
\end{eqnarray}
The Bethe ansatz equations in the new variables are as follows:
\begin{eqnarray}
\prod_{\alpha=1}^n\frac{{\h}^{\frac{1}{2}}-a_{\alpha}s_i^{-1}{\h}^{-\frac{1}{2}}
}{1-a_{\alpha}s_i^{-1}} =
Z^{-2}\prod_{j=1,~ i\neq j}^k \frac{s_j^{-1}{\h}^{\frac{1}{2}}-s_i^{-1}{\h}^{-\frac{1}{2}}} {s_j^{-1}{\h}^{-\frac{1}{2}}-s_i^{-1}{\h}^{\frac{1}{2}}}
\end{eqnarray}
Now we want to relate operator $Q(x)$ defined {\it ad hoc} on the eigenvectors with the quantum group-based operators $Q_{\pm}(x)$ from previous section. 

It is intuitively clear from the TQ-relation we derived, that they should differ by multiplication on a certain power series in $x$, so that $Q(x)=F(x)Q_+(x)$.  
We refer the interested reader to read complete details on identification to \cite{fh}, namely 
Theorem 5.9, Corollary 5.10 and finally Theorem 5.11. 
 Here we just assume this relation and simply derive the formula for the corresponding power series $F(x)$. 
If $T_1^f(x)=t(x)\mathcal{T}(u)$, then we have the following system of functional equations:
\begin{eqnarray}
&&t(x)G({\h}^{-1}x)F({\h}x)=F(x), \quad t(x)G(x)F({\h}^{-1}x)=F(x), \\
&&G(x)=\prod_{i=1}^{N}g_i(x).
\end{eqnarray}
While we already know the solution for $t(x)$ from the explicit normalization of the $\mathcal{L}$-operator, the solution for $F$ can be derived from the two equations above:
\begin{eqnarray}
F({\h}^{-2}x)=\frac{G({\h}^{-2}x)}{G({\h}^{-1}x)}F(x)
\end{eqnarray}
Representing $F(x)=\prod_{i=1}^nf_i(x)$, this equation reduces to $f_i({\h}^{-2}x)=\frac{g_i({\h}^{-1}x)}{g_i(\h^{-2}x)}f_i(x)$ and therefore
\begin{eqnarray}
f_i({\h}^{-2}x)=\frac{1-a_ix\h^{-2}}{1-a_ix{\h}^{-1}}f_i(x)
\end{eqnarray}
Hence,
\begin{eqnarray}
f_i(x)=\frac{\prod_{k=1}^{\infty}(1-a_ix{\h}^{2k-2})}{\prod_{k=1}^{\infty}(1-a_ix\h^{2k-1})}=
\frac{(a_ix{\h}^{-2},{\h} ^2)_{\infty}}{ (a_ix{\h}^{-1},\h^2)_{\infty}}
\end{eqnarray}
Note, that this ratio of Pochhammer coefficients because of q-binomial formula can be re-expanded as power series in $x$. In the following when we write it, we interpret it as such an  expansion.

\begin{Proposition}\label{norm}
The eigenvalues of the operator
\begin{eqnarray}
\mathcal{Q}_+(x)=\prod^n_{i=1}\frac{(a_ix{\h}^{-2},{\h} ^2)_{\infty}}{ (a_ix{\h}^{-1},{\h}^2)_{\infty}}Q_+(x),
\end{eqnarray}
where the coefficient in the RHS is understood as power series in $x$-variable, are polynomials with respect to the $x$ variable. Explicitly the eigenvalues are equal to 
$\prod_{i=1}^s\Big(1-{x}{s_i}\Big)$ and they are attained on the vectors
$\{P(s_1,\dots s_k)\}$, provided the Bethe ansatz equations
\begin{eqnarray}
\prod_{p=1}^n\frac{\h^{-1}a_{p}-s_i
}{a_{p}-s_i} =
Z^{-2}\h^{-\frac{n}{2}}\prod_{j=1,~ i\neq j}^k \frac{s_i-s_j{\h}^{-1}} {s_i\h^{-1}-s_j}
\end{eqnarray}
are satisfied.
\end{Proposition}

One can reproduce
$Q_-(x)$ from algebraic Bethe ansatz as well. In order to do that, one has to use lowest weight vector
$\Omega_-=\nu_1\otimes\dots\otimes \nu_n$. It is annihilated by $B(u)$-operators and the space of states is spanned by the operators $C(v_1)\dots C(v_k)$ acting on $\Omega_-$.
The following Theorem is an analogue of Theorems \rf{theig} and \rf{norm}.
\begin{Theorem}
Vectors $\{P_{-}(s_1,\dots s_k)=C(v_1)\dots C(v_k)\Omega_-\}$, where $s_i=v_i^{2}$ are the eigenvectors for $tr(\mathcal{T}(u))$ with eigenvectors $\Lambda(u|\{v_i\},Z^{-1})$ (see \rf{transeig}) , so that $s_i$ satisfy Bethe equations
\begin{eqnarray}
\prod_{p=1}^n\frac{\h^{-1}a_{p}-s_i
}{a_{p}-s_i} =
Z^{2}\h^{-\frac{n}{2}}\prod_{j=1,~ i\neq j}^k \frac{s_i-s_j{\h}^{-1}} {s_i\h^{-1}-s_j}
\end{eqnarray}
The operator
\begin{eqnarray}
\mathcal{Q}_-(x)=\prod^n_{i=1}\frac{(a_ix{\h}^{-2},{\h} ^2)_{\infty}}{ (a_ix{\h}^{-1},{\h}^2)_{\infty}}Q_-(x)
\end{eqnarray}
has eigenvalues
$\prod_{i=1}^n(1-x s_i)$ on the vectors
$P_-(s_1,\dots s_k)$.

\noindent ii) Vectors $P_-(s_1,\dots s_k)$ span the weight subspace $\tilde{\mathbf{H}}_k\subset\mathbf{H}$ corresponding to the eigenvectors of ${\mathcal{H}}$ with eigenvalue $2k-n$. 
\end{Theorem}

The following Proposition gives a normalized version of the quantum Wronskian relation (see Proposition \ref{wr}).

\begin{Theorem}
The Wronskian relation between $\mathcal{Q}_{\pm}$-operators reads as follows:
\begin{eqnarray}
&&Z\h^{\frac{h_{\alpha}}{4}}\mathcal{Q}_{+}({\h}^{\frac{1}{2}} x)
\mathcal{Q}_{-}({\h}^{-\frac{1}{2}}x)-
Z^{-1}{\h}^{-\frac{h_{\alpha}}{4}}
\mathcal{Q}_{+}({\h}^{-\frac{1}{2}}x)
\mathcal{Q}_{-}({\h}^{\frac{1}{2}} x)=\nonumber\\
&&(Z{\h}^{\frac{h_{\alpha}}{4}}-{\h}^{-\frac{h_{\alpha}}{4}}Z^{-1})\prod^n_{i=1}(1-a_i{\h}^{-\frac{1}{2}}x).
\end{eqnarray}
\end{Theorem}

\subsection{Explicit expression for the $Q$-operator via simple root generators}
In order to represent  operator $\mathcal{Q}_{+}(x)$ via Chevalley generators of ${\bf b}_{+}$ on $\pi_1(a_1)\otimes\dots\otimes \pi_1(a_n)$, we have to understand how to compute traces of weighted products of $\mathcal{E}_{\pm}$ in $\rho_{+}$ representation.

Our first ingredient is to compute $tr(e^{\alpha H}\ce^k_{+}\ce^m_-)$ for any $k, m$ as power series in  $e^{-\alpha}$.

\begin{Lemma}
i) Trace $tr(e^{\alpha H}\ce^k_{+}\ce^m_-)$ is zero if $k\neq m$.\\
ii) Assuming $_+\langle 0|$ is dual to the a vacuum vector $c_k=~ _+\langle 0|  \ce^k_{+}\ce^k_-|0\rangle_+=\frac{{\h}^{-k(k+1)/4}[k]_{\sqrt{\h}} !}{({\h}^{\frac{1}{2}}-{\h}^{-\frac{1}{2}})^k}$
\end{Lemma}
\begin{proof} Part i) follows from the cyclic property of the trace and the commutation relations of generators. To prove part ii), one should use that
\begin{eqnarray}
&&\ce_+ \ce_-^k=c
{\h}^{-\frac{1}{2}}\ce_-^{k-1}+
{\h}^{-1}\ce_{-}\ce_{+}\ce_-^{k-1}=
c{\h}^{-\frac{1}{2}}\frac{1-{\h}^{-k}}{1-{\h}^{-1}}\ce_-^{k-1}+
{\h}^{-k}\ce_-^{k}\ce_+=\nonumber\\
&&c{\h}^{-k/2}[k]_{\sqrt{\h}}\ce_-^{k-1}+{\h}^{-k}\ce_-^{k}\ce_+,
\end{eqnarray}
where $c=({\h}^{\frac{1}{2}}-{\h}^{-\frac{1}{2}})\inv$. Therefore $c_k=c {\h}^{-k/2}[k]_{\sqrt{\h}}c_{k-1}$ and this implies $ii)$.
\end{proof}

Finally, one can write the trace of the desired expression:
\begin{eqnarray}
tr(e^{\alpha H}\ce^m_{+}\ce^m_-)=\sum^{\infty}_{k=0}~_+\langle 0|\ce^k_{+}  e^{\alpha H}\ce^m_{+}\ce^m_-\ce^k_-|0\rangle_+
\frac{{\h}^{\frac{k(k+1)}{4}}}{c^k[k]_{\sqrt{\h}}!}.
\end{eqnarray}
Let us express every summand in the expression above as $\sigma_k$. Then
\begin{eqnarray}
&&\sigma_k=\frac{e^{-2\alpha k}{\h}^{\frac{k(k+1)}{4}}}{c^k[k]_{\sqrt{\h}}!}~
_+\langle 0|  \ce^{k+m}_{+}\ce^{k+m}_-|0\rangle_+=\nonumber\\
&&\frac{e^{-2\alpha k}{\h}^{\frac{k(k+1)}{4}}}{c^k[k]_{\sqrt{\h}}!}c^{k+m}[k+m]_{\sqrt{\h}} !{\h}^{-(k+m)(k+m+1)/4}=
\nonumber\\
&&
{\h}^{-\frac{m(m+1)}{4}}\h^{\frac{(m+k)(m+k-1)}{4}}{\h}^{-\frac{k(k-1)}{4}}c^m\frac{(k+m)_{{\h}^{-1}}!}
{(k)_{{\h}^{-1}}!}x^k
={\h}^{-m/2}c^m\frac{(k+m)_{{\h}^{-1}}!}
{(k)_{{\h}^{-1}}!}x^k\nonumber
\end{eqnarray}
where we used the fact that $[s]_{\sqrt{\h}}!={\h}^{\frac{s(s-1)}{4}}(s)_{{\h}^{-1}}$ and $x=e^{-2\alpha}$.
In order to sum over $k$, let us use the quantum binomial formula:
\begin{eqnarray}
\sum_{k\ge 0}\frac{(k+m)_{{\h}^{-1}}!}{(k)_{{\h}^{-1}}!(m)_{{\h}^{-1}}!} x^k=\prod_{k=0}^m\frac{1}{1-{\h}^{-k}x}.
\end{eqnarray}
As a result we obtain the following Proposition.
\begin{Proposition}
The weighted trace of the product of oscillator operators is given by the following formula:
\begin{eqnarray}
tr(e^{\alpha H}\ce^m_{+}\ce^m_-)=\frac{{\h}^{-m/2}(m)_{{\h}^{-1}}!}{({\h}^{\frac{1}{2}}-{\h}^{-\frac{1}{2}})^m}
\prod_{k=0}^m\frac{1}{1-{\h}^{-k}e^{-2\alpha}}, 
\end{eqnarray}
where the RHS is understood as an element of $\mathbb{C}[[e^{-\alpha}]]$.
\end{Proposition}
Now we are ready to write the formula for the $\mathcal{Q}_+$-operator via the trace formula we introduced. Notice, that in the expression for $Q^f_+(x)$ we are taking the trace of the following expression:
\begin{eqnarray}
\exp_{{\h}^{-1}}(x({\h}^{\frac{1}{2}}-{\h}^{-\frac{1}{2}})(e_{\delta-\alpha}\otimes \ce_{+}))R_0(x)\exp_{{\h}^{-1}}(({\h}^{\frac{1}{2}}-{\h}^{-\frac{1}{2}})(e_{\alpha}\otimes \ce_{-}))\mathcal{K}(1\otimes {\h}^{fH/2})\nonumber
\end{eqnarray}
Notice that all other q-exponential terms vanish, since $\tilde{e}_{-\delta}=-\frac{\sqrt{\h}x}{{\h}^{\frac{1}{2}}-{\h}^{-\frac{1}{2}}}$ is  constant in the oscilaltor representation, and therefore all its commutators vanish.
In order to express $R_0(x)$ appropriately, we have to calculate the
$\tilde{e}_{-k\delta}$ generators:
\begin{eqnarray}
&&\mathcal{E}(u)=({\h}^{\frac{1}{2}}-{\h}^{-\frac{1}{2}})\sum_{k\ge 1}
\tilde e_{-k\delta}u^k=\nonumber\\
&&\log(1+({\h}^{\frac{1}{2}}-{\h}^{-\frac{1}{2}})
\tilde{e}'_{-\delta}u)=\log(1-xu)=-\sum_{k\ge 1}\frac{(\sqrt{\h}xu)^k}{k}
\end{eqnarray}
Therefore $\tilde{e}_{-k\delta}=-\frac{({\h}^{1/2}x)^k}{k({\h}^{\frac{1}{2}}-{\h}^{-\frac{1}{2}})}$. Hence
\begin{eqnarray}
R_0(x)=\exp\Big (-\sum_{k\ge 0}\frac{({\h}^{\frac{1}{2}}x)^k}{[2k]_{\sqrt{\h}}}(\tilde{e}_{k\delta}\otimes 1)\Big)
\end{eqnarray}
Let us combine that with the normalizing function for the $Q$-operator  and denote it $W(x)$.  Notice that $W(x)$ is independent of $z$.
Also, let us denote $U={\h}^{(h_{\alpha}\otimes 1)/4}Z$, so that $\mathcal{K}(I\otimes Z^H)=U^{1\otimes H}$. Recall, that the normalization factor is $\mathcal{Z}_{+}(h_{\alpha})=(1-U^{-2})^{-1}$.
Therefore, one can write the expression for $\mathcal{Q}_+(x)$ as follows:
\begin{eqnarray}
&&\mathcal{Q}_+(x)=1+(1-U^{-2})\sum_{m=1}^{\infty}\tilde{W}^Z_mx^m, \\
&&\tilde{W}^Z_m=\sum^m_{k=0}\frac{(\h^{1/2}-\h^{-1/2})^k{\h}^{-k/2}}{(k)_{{\h}^{-1}}!}\prod_{i=0}^k\frac{1}{(1-{\h}^{-i}U^{-2})}e^k_{\delta-\alpha}W_{m-k}e^k_{\alpha}.\nonumber
\end{eqnarray}
Therefore
we have the following Theorem.
\begin{Theorem}$Z$-dependence of the operator $\mathcal{Q}_+(x)$ acting on the representation $\pi_1(a_1)\otimes\dots\otimes \pi_N(a_N)$ can be expressed as follows:
\begin{eqnarray}
&&\mathcal{Q}_+(x)=1+\sum_{m=1}^{\infty}W^Z_mx^m,\nonumber \\
&&W^Z_m=\sum^m_{k=0}\frac{(\h^{1/2}-\h^{-1/2})^k{\h}^{-k/2}}{(k)_{{\h}^{-1}}!\prod_{i=1}^
{k}(1-{\h}^{-i}{\h}^{-h_{\alpha}/2}Z^{-2})}e^k_{\delta-\alpha}
W_{m-k}e^k_{\alpha},
\end{eqnarray}
where $W(x)=\sum^{\infty}_{m=1}x^m W_m$ is the limit $Z\to 0$, which corresponds to the diagonal operator with eigenvalues $\prod_{i}(1-a_ix)$ in the standard basis
of the representation.
\end{Theorem}

\noindent {\bf Remark.} Here we note, that although there is an infinite sum  in the expression for 
$\mathcal{Q}_+(x)$ above, it is necessarily a polynomial in $x$, since we are considering it in the finite dimensional representation $V$ of $\mathbf{b}_+$.\\

Switching to the Drinfeld basis \rf{drinf}, we obtain the following result.

\begin{Corollary}
In the Drinfeld basis the formula for $W_m^Z$ reads as follows:
\begin{eqnarray}
W^Z_m=\sum^m_{k=0}
\frac{(1-\h^{-1})^k{\h}^{-k(k+1)/2}K^{-k}}{(k)_{{\h}^{-1}}!
\prod_{i=1}^{k}
(1-{\h}^{-i}K^{-1}Z^{-2})}
F_0^k
W_{m-k}E_{-1}^k.
\end{eqnarray}
\end{Corollary}

We note that a similar formula can be written for $\mathcal{Q}_-(x)$.

\subsection{Geometric realization of the $Q$-operator}

In this subsection we will relate quantum tautological bundles to the Baxter $\mathcal{Q}_+$-operator, which will allow us to write combinatorial formula for some of them on $K_{\bT}(\N(n))$ in the  basis of fixed points.
Recall from Section 4 that the eigenvalues of $\hat{\tau}(z)$ are given by $\tau(s_1,\dots, s_n)$ evaluated at solutions of Bethe equations from Section 4, which differ from the ones, obtained in section by a substitution $\h \to \h^{-1}$ and $Z^2\to (-1)^nz$. Moreover, we know the explicit form of the from the operator $\mathbf{M}(z)$ (see Theorem \ref{Mop} for the definition) from the section 7.3.7 of \cite{os}. Also, we know from the Theorem \ref{Mth} that 
$$
	\M(z)|_{q=1}=\widehat{\O(1)}(z) \circledast.
	$$
	Thus, the quantum tautological bundle,  corresponding to the top wedge power of the tautological bundle is expressed via the following formula:
\begin{eqnarray}
\widehat{\mathcal{O}(1)}(z) = B(z)\, {\mathcal{O}}(1), \ \ \ B(z)=\sum\limits_{m=0}^{\infty} \dfrac{\hbar^{m(m+1)/2} (\hbar-1)^m K^m }{[m]_{\h}!\prod\limits_{i=1}^{m}(1-(-1)^nz^{-1} K \hbar^i)} F_{0}^{m} E_{0}^{m},
\end{eqnarray}
which coincide with the expression for $W^Z_m$ on $\mathbf{H}_{m}$ weight subspace given the commutation relation of $\mathcal{O}(1)$ with quantum group generators (see \cite{os}). 
This immediately leads to the following result.
\begin{Theorem}
The operator ${\mathcal{Q}_+}(x)$ upon the transformation $\h\to \h^{-1}$, $K\to K^{-1}$ and identification $z=Z^2$ is  equal to the operator of quantum multiplication by the quantum weighted exterior algebra of tautological bundle $\sum_{i=0}^k(-1)^i\widehat{\Lambda^i\mathcal{V}}x^i$. Moreover, the following combinatorial formula, explicitly expressing the dependence of quantum exterior powers tautological bundles on the deformation parameter, is valid:
\begin{eqnarray}
\widehat{\Lambda^m\mathcal{V}}&=&
\sum^m_{k=0}
\frac{(\h-1)^k{\h}^{k(k+1)/2}K^k}{(k)_{{\h}}!
\prod_{i=1}^{k}
(1-(-1)^n{\h}^{i}Kz^{-1})}
F_0^k
(\Lambda^{m-k}\mathcal{V})E^k_{-1}
\end{eqnarray}
\end{Theorem}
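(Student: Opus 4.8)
The plan is to prove the theorem in two stages: first match the transformed Baxter operator $\mathcal{Q}_+(x)$ with the generating operator $\sum_{i=0}^{k}(-1)^i\,\widehat{\Lambda^i\mathcal{V}}(z)\,x^i$ of quantum multiplication, and then read off the closed combinatorial formula from the explicit Drinfeld-basis expression for $\mathcal{Q}_+(x)$ established above. For the first stage I would compare spectra. Both operators act on $K_{\bT}(\N(n))$, which by the construction of Section 5 is identified as an $\fsh$-module with $\bigotimes_{i=1}^n\C^2(a_i)$, the fixed-point classes $\O_\fp$ going to the standard product basis. On the geometric side Theorem \ref{thbt} gives the eigenvalue of $\widehat{\Lambda^i\mathcal{V}}(z)$ on the eigenvector attached to a solution $\{s_1,\dots,s_k\}$ of the Bethe equations (\ref{beth}) as $e_i(s_1,\dots,s_k)$ (the Chern-root polynomial of $\Lambda^i\mathcal{V}$), so $\sum_i(-1)^i\widehat{\Lambda^i\mathcal{V}}(z)x^i$ has eigenvalue $\prod_{j=1}^k(1-x s_j)$. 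On the representation-theoretic side Proposition \ref{norm} states that $\mathcal{Q}_+(x)$ has eigenvalue $\prod_{j=1}^k(1-x s_j)$ on the algebraic Bethe vector $P(s_1,\dots,s_k)$, with $\{s_j\}$ solving the Bethe system of Proposition \ref{norm}; and, as noted just before the theorem, that system becomes (\ref{beth}) under $\hbar\to\hbar^{-1}$, $Z^2\to(-1)^n z$. Hence after this substitution the two families of eigenvalues coincide.

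Next I would argue that the eigenvectors agree, so the operators are equal. Both $\mathcal{Q}_+(x)$ and $\sum_i(-1)^i\widehat{\Lambda^i\mathcal{V}}(z)x^i$ commute with the full commuting family of the integrable hierarchy --- the transfer matrices $\mathrm{tr}\,\mathcal{T}(u)$ on one side, equivalently the operators of quantum multiplication (simultaneously diagonalized by the $\psi_\fp(z)$) on the other. For generic equivariant parameters $a_i,\hbar$ the joint spectrum of this family is simple and the algebraic Bethe vectors form a basis of $K_{\bT}(\N(n))$ (completeness of the twisted $XXZ$ Bethe ansatz), so a common eigenvector is pinned down up to scale; therefore the $P(s_1,\dots,s_k)$ coincide (up to irrelevant normalization) with the $\psi_\fp(z)$, and the two operators, being diagonal in the same basis with the same eigenvalues, are equal for generic parameters, hence identically, since all of $a_i,\hbar,z$ enter rationally. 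Alternatively one invokes the universality statement of Section 1.5: $\mathcal{Q}_+(x)$ is a trace of the universal $R$-matrix and $\widehat{\Lambda^i\mathcal{V}}(z)$ is a universal element of $\fsh[[z]]$, so agreement on all the modules $\bigotimes_i\C^2(a_i)$ already forces equality. This proves the first assertion.

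For the combinatorial formula, insert into $\mathcal{Q}_+(x)=1+\sum_{m\ge1}W_m^Z x^m$ the Drinfeld-basis expression for $W_m^Z$ established above (the last Corollary of the preceding section), together with the identification of its $Z\to0$ limit $W(x)=\sum_m W_m x^m$. In that limit the Bethe roots degenerate to subsets $\{a_j:j\in\fp\}$ of the inhomogeneities, so $W(x)$ acts on $\O_\fp$ by $\prod_{j\in\fp}(1-a_jx)=\sum_m(-1)^m e_m\big(\{a_j\}_{j\in\fp}\big)x^m$; comparing with (\ref{clev}) this identifies $W_m$ with classical multiplication by $(-1)^m\Lambda^m\mathcal{V}$. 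Matching the coefficient of $x^m$ in the identity $\mathcal{Q}_+(x)=\sum_i(-1)^i\widehat{\Lambda^i\mathcal{V}}(z)x^i$ and applying $\hbar\to\hbar^{-1}$, $K\to K^{-1}$, $Z^2\to(-1)^n z$, the sign $(-1)^m(-1)^{m-k}=(-1)^k$ combines with $(1-\hbar^{-1})^k$ into $(\hbar^{-1}-1)^k\mapsto(\hbar-1)^k$, while $\hbar^{-k(k+1)/2}K^{-k}\mapsto\hbar^{k(k+1)/2}K^k$, $(k)_{\hbar^{-1}}!\mapsto(k)_{\hbar}!$, and $\prod_{i=1}^k(1-\hbar^{-i}K^{-1}Z^{-2})\mapsto\prod_{i=1}^k(1-(-1)^n\hbar^iKz^{-1})$; the result is exactly the asserted formula for $\widehat{\Lambda^m\mathcal{V}}$, which in particular re-derives Theorem \ref{Bform}.

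The only non-formal point, and the step I expect to be the main obstacle, is the eigenvector-matching: it relies on completeness of the twisted $XXZ$ Bethe ansatz and simplicity of the joint spectrum at generic parameters, so that diagonalizing the shared commuting family determines the eigenbasis. I would handle this either by the genericity-plus-rationality argument above or, more cleanly, via the universality route, which bypasses eigenvector identification entirely and reduces the claim to comparing two universal elements of $\fsh[[z]]$ on the representations $\bigotimes_i\C^2(a_i)$.
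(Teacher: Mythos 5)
Your overall architecture --- match eigenvalues via the Bethe equations, then match eigenvectors, then extract the combinatorial formula by comparing coefficients of $x^m$ in the Drinfeld-basis expansion of $\mathcal{Q}_+(x)$ --- is the same as the paper's, and your bookkeeping of signs and of the substitution $\h\to\h^{-1}$, $K\to K^{-1}$, $Z^2\to(-1)^nz$ (including the identification of $W_m$ with classical multiplication by $(-1)^m\Lambda^m\tb$ from the $Z\to0$ limit) is correct. You also correctly isolate the eigenvector-matching step as the crux. The problem is that neither of the two routes you offer for that step actually closes it. The ``completeness plus simple spectrum'' route only shows that $\mathcal{Q}_+(x)$ and $\sum_i(-1)^i\widehat{\Lambda^{\! i}\tb}(z)x^i$ are conjugate: two diagonalizable operators with the same simple spectrum need not be equal, and your appeal to ``the full commuting family'' presupposes that the transfer-matrix family and the quantum-multiplication family share an eigenbasis --- which is exactly the statement in question (the word ``equivalently'' is doing all the work). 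The universality route is likewise circular: the universality of $\hat{\tau}(z)$ as an element of $\fsh[[z]]$ is, in this paper, deduced \emph{from} the present theorem, and in any case comparing two universal elements still requires knowing that they agree on some faithful family of modules.

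The paper closes this gap with an input you never use: the explicit formula $\widehat{\O(1)}(z)=B(z)\,\O(1)$ from Section 7.3.7 of \cite{os}, quoted immediately before the theorem and tied to the geometry here through Theorem \ref{Mth}, which identifies $\M(z)|_{q=1}$ with $\widehat{\O(1)}(z)$. This expresses one geometric quantum-multiplication operator --- the quantum determinant line bundle, i.e.\ the top quantum exterior power on each component $\N_{m,n}$ --- as an explicit element of $\fsh$, which after the substitution coincides with the corresponding coefficient $(-1)^mW^Z_m$ of $\mathcal{Q}_+(x)$ on that component. Since that single operator is therefore diagonalized both by the geometric eigenvectors $\psi_{\fp}(z)$ and by the Bethe vectors, and its spectrum is generically simple, the two bases coincide; only then does your eigenvalue comparison upgrade to equality of all the operators. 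To repair your argument you would need either this external identification or an honest proof of completeness of the twisted $XXZ$ Bethe ansatz together with an independent identification of at least one common member of the two commuting families.
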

\begin{proof}
First of all, the eigenvalues of the corresponding operators coincide. Therefore, we have to show that these operators are diagonalized in the same basis. 

Note, that the eigenvalues of the operator $W^Z_m$ on $\mathbf{H}_m$ are distinct for generic values of $Z, \{a_i\}, \hbar$: they are products of the corresponding Bethe roots. For example, in the limit $Z\to 0$ they are indeed distinct for generic $\{a_i\}$ (they are just the $m$-products of various $a_i$).  Notice, that all other operators $W^Z_r$ commute with $W^Z_m$ on $\mathbf{H}_m$ as a consequence of integrability. Therefore, for generic $Z, \{a_i\}, \hbar$ they coincide with the corresponding tautological bundles. Since both $W^Z_m$ and quantum tautological bundles are rational functions of $Z, \{a_i\}, \hbar$, this means that it is true in general.
\end{proof}


Therefore, we arrive to the following theorem regarding multipication on the quantum tautological bundles.

\begin{Theorem}
The operators of quantum multiplication by $\hat{\tau}(z)$ are the universal elements in  $\fsh[[z]]$, i.e. they do not depend on $k,n$ defining Nakajima variety. 
\end{Theorem}
\begin{proof}
Since quantum exterior powers generate any quantum class $\tau$ under quantum 
multipication, as we know from Theorem \ref{symmeig}, one obtains formulas for multiplication by $\hat{\tau}(z)$ via composition of the corresponding operators from  $\fsh[[z]]$. Thus the theorem is proven.
\end{proof}

\begin{appendices}

\section{Cartan-Weyl basis and R-matrix for $\fsh$. }

According to Khoroshkin and Tolstoy, see e.g. \cite{ktolst}, one can construct an analogue of Cartan-Weyl basis for the above Hopf algebra, based on different ordering of roots:
\begin{eqnarray}
e_{\delta}^{}\!\!\!&:=\!\!\!& [e_{\alpha}^{},e_{\delta-\alpha}^{}]_{\sqrt{\h}}~,
\qquad \qquad\qquad \ \ \
  e_{-\delta}^{} :=
[e_{-\delta+\alpha}^{},e_{-\alpha}^{}]_{\sqrt{\h}^{-1}}\, ,
\\
e_{n\delta+\alpha}^{}\!\!\!&:=\!\!\!& \frac{1}{a}
 [e_{(n-1)\delta+\alpha}^{},e_{\delta}^{}]~,
 \qquad \ \ \;\,
 e_{-n\delta-\alpha}^{} :=  \frac{1}{a}
[e_{-\delta}^{},e_{-(n-1)\delta-\alpha}^{}]\, ,
\nonumber\\
e_{(n+1)\delta-\alpha}^{}\!\!\!&:=\!\!\! & \frac{1}{a}
[e_{\delta}^{},e_{n\delta-\alpha}^{}]~,\;\,
\qquad \ \
 e_{-(n+1)\delta+\alpha}^{} := \frac{1}{a}
[e_{-n\delta+\alpha}^{},e_{-\delta}^{}]\, ,
\nonumber\\
e_{n\delta}'\!\!\!&:=\!\!\! & [e_{\alpha}^{},e_{n\delta-\alpha}^{}]_{\sqrt{\h}}~,
\qquad    \qquad \qquad
  e_{-n\delta}' :=
[e_{-n\delta+\alpha}^{},e_{-\alpha}^{}]_{\sqrt{\h}^{-1}}\, ,\nonumber
\end{eqnarray}
where $n=1,2,\ldots$, and $a={\h}^{\frac{1}{2}}+{\h}^{-\frac{1}{2}}$.
Analogously we set
\begin{eqnarray}
\tilde{e}_{\delta}^{}\!\!\!&:=\!\!\!&
[e_{\delta-\alpha}^{},e_{\alpha}^{}]_q~,\qquad\qquad \qquad
\tilde{e}_{-\delta}^{}:=[e_{-\alpha}^{},e_{-\delta+\alpha}^{}]_{q^{-1}},
\\
\tilde{e}_{(n+1)\delta-\alpha}^{}\!\!\!&:=\!\!\!&\frac{1}{a}
[\tilde{e}_{n\delta-\alpha}^{},\tilde{e}_{\delta}^{}]~,\qquad\;\,
\tilde{e}_{-(n+1)\delta+\alpha}^{}:=\frac{1}{a}
[\tilde{e}_{-\delta}^{},\tilde{e}_{-n\delta+\alpha}^{}]~,
\nonumber\\
\tilde{e}_{n\delta+\alpha}^{}\!\!\!&:=\!\!\!&\frac{1}{a}
[\tilde{e}_{\delta}^{},\tilde{e}_{(n-1)\delta+\alpha}^{}]~,\qquad\;\,
\tilde{e}_{-n\delta-\alpha}^{}:=\frac{1}{a}
[\tilde{e}_{-(n-1)\delta-\alpha}^{},\tilde{e}_{-\delta}^{}]~,
\nonumber\\
\tilde{e}_{n\delta}'\!\!\!&:=\!\!\!&
[e_{\delta-\alpha}^{},\tilde{e}_{(n-1)\delta+\alpha}^{}]_{\sqrt{\h}}~,\qquad\;\;
\tilde{e}_{-n\delta}':=[e_{-\delta+\alpha}^{},
\tilde{e}_{-(n-1)\delta-\alpha}^{}]_{\sqrt{\h}^{-1}},\nonumber
\end{eqnarray}
where $n=1,2,\ldots$. Both versions are related by the involution $\tau$:
\begin{equation}
\begin{array}{rccccl}
\tau(e_{n\delta+\alpha})\!\!\!&=\!\!\!&\tilde{e}_{(n+1)\delta-\alpha},\quad\;
\tau(\tilde{e}_{n\delta+\alpha})\!\!\!&
=\!\!\!&e_{(n+1)\delta-\alpha}\quad &(n\in\mathbb{Z}),\quad
\\[7pt]
\tau(e_{n\delta-\alpha})\!\!\!&=\!\!\!&\tilde{e}_{(n-1)\delta+\alpha},\quad\;
\tau(\tilde{e}_{n\delta-\alpha})\!\!&=\!\!&e_{(n-1)\delta+\alpha}\quad &(n\in\mathbb{Z}),
\\[7pt]
\tau(e'_{n\delta})\!\!\!&=\!\!\!&\tilde{e}'_{n\delta},\qquad\qquad\quad
\tau(\tilde{e}'_{n\delta})\!\!\!&=\!\!\!&e'_{n\delta}\qquad\qquad &(n\neq 0).
\end{array}
\end{equation}
These Cartan-Weyl basis elements allow to represent the R-matrix, providing the
quasitriangular structure for our Hopf algebra, in the triangular form.
In order to describe it, one has to introduce another set of generators $e_{n\delta}$ for pure imaginary roots, so that the relation is described best using generating functions.
\begin{eqnarray}
{\mathcal E}'_{\pm}(u)=({\h}^{\frac{1}{2}}-{\h}^{-\frac{1}{2}})\sum_{n\geq 1}^{}{e'}_{\pm n\delta}u^{\mp n}~,\quad
{\mathcal E}_{\pm}(u)=({\h}^{\frac{1}{2}}-{\h}^{-\frac{1}{2}})\sum_{n\geq 1}^{}e_{\pm n\delta}u^{\mp n}
\end{eqnarray}
so that ${\mathcal E}_{\pm}(u)=\ln(1+{\mathcal E}'_{\pm}(u))$. For $\tilde{e}_{n\delta}$ one can write identical formulas.\\

The R-matrix is an invertible element in $U_{{\h}}(\widehat{\mathfrak{sl}}_2)\otimes U_{{\h}}(\widehat{\mathfrak{sl}}_2)$,  which satisfies the following relations with respect to the coproduct $\Delta_{\sqrt{\h}}$ and opposite coproduct $\tilde{\Delta}_{\sqrt{\h}}=\sigma\Delta_{\sqrt{\h}}$ and $\sigma(a \otimes b)=b\otimes a$:
\begin{eqnarray}
&&\tilde{\Delta}_{\sqrt{\h}}(a)\!\!=\!\!R\Delta_{\sqrt{\h}}(a)R^{-1} \qquad\quad\;\;
\forall\,\,a \in U_{\sqrt{\h}}(\widehat{\mathfrak{sl}}_2)~,
\nonumber\\
&&(\Delta_{\sqrt{\h}}\otimes{\rm id})R\!\!=\!\!R^{13}R^{23}~,\qquad
({\rm id}\otimes\Delta_{\sqrt{\h}})R=R^{13}R^{12},
\end{eqnarray}
The relations above can be understood in the following way:
$R^{12}=\sum a_{i}\otimes b_{i}\otimes{\rm id}$,
$R^{13}=\sum a_{i}\otimes{\rm id}\otimes b_{i}$,
$R^{23}=\sum {\rm id}\otimes a_{i}\otimes b_{i}$ if $R$ has the form
$R=\sum a_{i}\otimes b_{i}$.

Using the definition of the $q$-exponential:
\begin{equation}
\exp_{q}(x):=1+x+\frac{x^{2}}{(2)_{q}!}+\dots+
\frac{x^{n}}{(n)_{q}!}+\dots=
\sum_{n\geq0}\frac{x^{n}}{(n)_{q}!},
\end{equation}
where $(n)_{q}:=\frac{q^{n}-1}{q-1}$.

we give an explicit expression of the universal $R$-matrix:
\begin{equation}
R=R_{+}R_{0}R_{-}\mathcal{K}~.
\end{equation}
Here the factors $\mathcal{K}$ and $R_{\pm}$ have the following form:
\begin{eqnarray}
\mathcal{K}&\!\!=\!\!&{\h}^{\frac{1}{2(\alpha,\alpha)}h_{\alpha}\otimes h_{\alpha}},
\nonumber\\
R_{+}&\!\!=\!\!&\prod_{n\ge 0}^{\rightarrow}\mathcal{R}_{n\delta+\alpha}, \qquad
R_{-}=\prod_{n\ge 1}^{\leftarrow}\mathcal{R}_{n\delta-\alpha}.
\end{eqnarray}
were the elements $\mathcal{R}_{\gamma}$ are given by the formula
\begin{equation}
\mathcal{R}_{\gamma}=\exp_{\sqrt{\h}^{-1}_{\gamma}}
\Big((\h^{1/2}-\h^{-1/2})(e_{\gamma}\otimes e_{-\gamma})\Big)~,
\end{equation}
where ${\h}_{\gamma}={\h}^{(\gamma,\gamma)}$.
The factor $R_{0}$ is defined as follows:
\begin{equation}
R_{0}=\exp\Big((\h^{1/2}-\h^{-1/2})\sum_{n>0}^{}d(n)%
e_{n\delta}\otimes e_{-n\delta}\Big)~,
\end{equation}
where $d(n)$ is given by
\begin{equation}
d(n)=\frac{n({\h}^{\frac{1}{2}}-{\h}^{-\frac{1}{2}})}
{{\h}^{n}-{\h}^{-n}}~.
\end{equation}
In terms of ``dual", tilded generators the universal R-matrix will have the form:
\begin{equation}
R=\tilde{R}_{+}\tilde{R}_{0}\tilde{R}_{-}\tilde{\mathcal{K}}~.
\end{equation}
Here the factors $\t{\mathcal{K}}$ and $\t R_{\pm}$ have the following form
\begin{eqnarray}
\tilde{\mathcal{K}}&\!\!=\!\!&{\h}^{\frac{1}{2(\alpha,\alpha)}h_{\delta-\alpha}\otimes h_{\delta-\alpha}},
\nonumber\\
\tilde{R}_{+}&\!\!=\!\!&\prod_{n\ge 1}^{\rightarrow}\tilde{\mathcal{R}}_{n\delta-\alpha}, \qquad
\tilde{R}_{-}=\prod_{n\ge 0}^{\leftarrow}\tilde{\mathcal{R}}_{n\delta+\alpha}.
\end{eqnarray}
were the elements $\tilde{\mathcal{R}}_{\gamma}$ are given by the formula
\begin{equation}
\tilde{\mathcal{R}}_{\gamma}=\exp_{\sqrt{\h}^{-1}_{\gamma}}
\Big(({\h}^{\frac{1}{2}}-{\h}^{-\frac{1}{2}})(\tilde{e}_{\gamma}\otimes \tilde{e}_{-\gamma})\Big)~,
\end{equation}
and
\begin{equation}
\t{R}_{0}=\exp\Big(({\h}^{\frac{1}{2}}-{\h}^{-\frac{1}{2}})\sum_{n>0}^{}d(n)%
\tilde{e}_{n\delta}\otimes \tilde{e}_{-n\delta}\Big)~.
\end{equation}

\end{appendices}

\end{document}